\newcommand{\bea}{\begin{eqnarray}}
\newcommand{\eea}{\end{eqnarray}}
\newcommand{\be}{\begin {equation}}
\newcommand{\ee}{\end{equation}}
\newcommand{\g}{  \mathfrak g}
\newcommand{\hg}{\hat \g}
\newcommand{\Z}{\mathbb Z}
\newcommand{\C}{\mathbb C}
\newtheorem{theorem}{Theorem}[section]
\newtheorem{lemma}[theorem]{Lemma}
\newtheorem{prop}[theorem]{Proposition}
\newtheorem{cor}[theorem]{Corollary}
\theoremstyle{definicija}
\theoremstyle{remark}
\newtheorem{remark}[theorem]{Remark}
\newenvironment{nospaceflalign*}
 {\setlength{\abovedisplayskip}{0pt}\setlength{\belowdisplayskip}{0pt}%
  \csname flalign*\endcsname}
 {\csname endflalign*\endcsname\ignorespacesafterend}
\numberwithin{equation}{section}
\begin{document}

\title[]{   A duality between vertex superalgebras $L_{-3/2}(\mathfrak{osp}(1\vert 2)) $      and $\mathcal V^{(2)}$ 
 and    generalization to logarithmic vertex algebras 
  }
\author[]{Dra\v zen  Adamovi\' c}
\author[]{Qing Wang}
  
  \begin{abstract}
  We introduce  a subalgebra $\overline F$ of the Clifford vertex superalgebra ($bc$ system) which is  completely reducible as a $L^{Vir} (-2,0)$--module, $C_2$--cofinite, but it is not conformal and it is not isomorphic to the symplectic fermion algebra $\mathcal{SF}(1)$.   We show that $\mathcal{SF}(1)$ and $\overline{F}$ are in an interesting  duality, since $\overline{F}$ can be equipped with the structure of a $\mathcal{SF}(1)$--module and vice versa. 
  
  Using  the decomposition of $\overline F$  and a free-field realization from \cite{A-2019}, we decompose $L_k(\mathfrak{osp}(1\vert 2))$ at the critical level $k=-3/2$ as a module for $L_k(\mathfrak{sl}(2))$.  The decomposition of $L_k(\mathfrak{osp}(1\vert 2))$   is  exactly the same as  of  the  $N=4$ superconformal vertex algebra with central charge $c=-9$, denoted by $\mathcal V^{(2)}$.  Using the duality between $\overline{F}$ and $\mathcal{SF}(1)$,  we prove that  $L_k(\mathfrak{osp}(1\vert 2))$ and $\mathcal V^{(2)}$ are in the  duality of the same type.  As an application, we construct and classify all irreducible $L_k(\mathfrak{osp}(1\vert 2))$--modules in the category $\mathcal O$ and the category $\mathcal R$ which includes relaxed highest weight modules. 
  We  also describe the structure of  the parafermion algebra $N_{-3/2}(\mathfrak{osp}(1\vert 2))$ as a  $N_{-3/2}(\mathfrak{sl}(2))$--module.
  
 We extend this example, and  for each $p \ge 2$, we introduce a non-conformal vertex algebra $\mathcal A^{(p)}_{new}$    and show that  $\mathcal A^{(p)}_{new} $ is isomorphic to the doublet vertex algebra as a module for the Virasoro algebra. We also construct the vertex algebra $ \mathcal V^{(p)} _{new}$ which is isomorphic to the logarithmic vertex algebra $\mathcal V^{(p)}$ as  a module for $\widehat{\mathfrak{sl}}(2)$.

  \end{abstract}
\maketitle
\section{Introduction}

In the representation theory of vertex algebras a special emphasis was put on $C_2$--cofinite, non-rational vertex algebras. Recently these vertex algebras are also called logarithmic vertex algebras since they appeared in logarithmic conformal field theory (cf. \cite{CR-2013}, \cite{CG}). Basic examples are  the triplet vertex algebras $\mathcal W^{(p)}$ (cf.  \cite{AM08}, \cite{FGST}, \cite{FGST2}) and the symplectic fermion vertex superalgebra $\mathcal{SF}(d)$ (cf. \cite{Abe}, \cite{Ka00}).
The triplet vertex algebra $\mathcal W^{(p)}$ has a simple current extension $\mathcal A^{(p)}$ (cf. \cite{AM-doublet}), which is called the doublet vertex algebra. Note that for $p= 2$, $\mathcal A^{(2)} \cong \mathcal SF(1)$. 

 Dualities like the Kazama-Suzuki duality and the  S-duality, have recently appeared  in the  papers  on  logarithmic vertex algebras, affine and $\mathcal W$--algebras (cf.  \cite{AK-2021},   \cite{CGL}, \cite{CGN}, \cite{FG}). In the current   paper, we    begin  a study of a different duality which relates the logarithmic vertex algebras with some non-conformal vertex algebras, like    affine  vertex superalgebras  at the critical level.

 Let $F$ be a Clifford vertex superalgebra generated by the charged fermionic fields $\Psi^{\pm}$ with non-trivial $\lambda$-bracket $[ \Psi ^+ _{\lambda} \Psi^-] = 1$.
Then  $\mathcal{SF}(1)$ is realized as:
$$ \mathcal{SF}(1) =\mbox{Ker}_F \int \Psi^-(z) dz. $$
The even part of $\mathcal SF(1)$   is isomorphic to the triplet vertex algebra $ \mathcal SF(1) ^+ \cong \mathcal W^{(p)}$ with  $p=2$ (cf. \cite{Abe}, \cite{AM08}).

The triplet vertex algebras and the symplectic fermion of rank one $\mathcal{SF}(1)$ are completely reducible modules for the Virasoro algebra. Let $L^{Vir}(c,h)$ denotes the irreducible, highest weight module for the Virasoro algebra of central charge $c$ and highest weight $h$. Then

\bea \mathcal{SF}(1) = \bigoplus_{n=0} ^{\infty} (n+1) L^{Vir}(-2, \frac{n ^2 +n}{2}). \label{dec-sf-uvod} \eea

A $\widehat{\mathfrak{sl}}(2)$-version of logarithmic vertex algebras were introduced in \cite{A-TG} and studied also in \cite{ACGY}. These algebras are called the $\mathcal V^{(p)}$--algebras. The $\mathcal V^{(p)}$ algebras are completely reducible modules for the affine Lie algebra $\widehat{\mathfrak{sl}}(2)$ at level $-2 + \frac{1}{p}$.
It was  proved that the quantum Hamiltonian reduction functor $H_{DS}$ sends  $\mathcal V^{(p)}$ to $\mathcal A^{(p)}$.

A very interesting case is  $p=2$  when  the vertex superalgebra $\mathcal V^{(2)}$  carries the structure of the small $N=4$ superconformal algebra with central charge $c_{1/2}=-9$ (cf. \cite{A-TG}). This vertex superalgebra has also appeared in   the four-dimensional super Yang-Mills theory in physics (cf. \cite{BMR}).

  Recall that the small $N=4$ superconformal algebra at central charge
$c_k= - 6(k +1)$ is 
 realised as the  minimal  affine $W$--algebra
$ \mathcal W_{k}(\mathfrak{psl}(2\vert2), f_{\theta})$ (cf. \cite{KW}). So in our case 
$\mathcal V^{(2)} = \mathcal W_{k}(\mathfrak{psl}(2\vert2), f_{\theta})$ with $k=1/2$.
Then we have:

\bea \mathcal V^{(2)} = \bigoplus_{n=0}  ^{\infty}(n+1) L^{\mathfrak{sl}(2)} _{-3/2}(n \omega_1). \label{dec-n4-uvod} \eea

 In the current paper we shall investigate  two non-conformal vertex algebras $\overline F$ and $L_{-3/2}(\mathfrak{osp}(1\vert 2))$ such that
\bea \overline F &=& \bigoplus_{n=0} ^{\infty} (n+1) L^{Vir}(-2, \frac{n ^2 +n}{2}), \label{dec-f-new-uvod} \\
  L_{-3/2}(\mathfrak{osp}(1\vert 2)) &=& \bigoplus_{n=0} ^{\infty} (n+1) L^{\mathfrak{sl}(2)}_{-3/2}(n \omega_1). \label{dec-osp12-uvod} \eea

 $L_{-3/2}(\mathfrak{osp}(1\vert 2))$ is the unique graded simple quotient of the universal affine vertex superalgebra   $V^{-3/2}(\mathfrak{osp}(1\vert 2))$ at the critical level.
  This vertex superalgebra is not conformal, but it has a vertex subalgebra isomorphic to   $L_{-3/2}(\mathfrak{sl}(2))$.   Thus,  we solve  the branching  rule problem for the embedding $\mathfrak{sl}(2) \hookrightarrow \mathfrak{osp}(1\vert 2)$ at level $k=-3/2$. Quite surprisingly,  we get the decomposition (\ref{dec-osp12-uvod}) which coincides with  (\ref{dec-n4-uvod}). Note that the Sugawara Virasoro vector of $L_{-3/2}(\mathfrak{sl}(2))$ does not define the structure of a vertex operator superalgebra on $L_{-3/2}(\mathfrak{osp}(1\vert 2))$. 
 
 The realization of  $L_k (\mathfrak{osp}(1\vert 2))$  was presented in \cite{A-2019}, so that  $L_{k}(\mathfrak{osp}(1\vert 2)) \hookrightarrow L^{ns}_{c_k} \otimes \Pi(0) ^{1/2}$, where $L^{ns}_{c_k}$ is the simple N=1 Neveu-Schwarz  vertex   superalgebra, and $\Pi(0) ^{1/2}$ is a half-lattice vertex algebra. A critical version of the realization was also presented in  \cite{A-2019}. 
 
  In the current paper, we investigate the critical level case in more details. We get a homomorphism $f : V^{-3/2}  (\mathfrak{osp}(1\vert 2))\rightarrow \overline F \otimes \Pi(0) ^{1/2}$. The vertex algebra   $\overline F$ is   realized as
  $$\overline F =\mbox{Ker}_F \int \Phi_2 (z) dz,  $$
  where $ \Phi_2 (z) = \frac{1}{\sqrt{2}} (\Psi^+ (z) - \Psi^- (z))$ is the neutral fermionic field. We show that $Im (f)$ is the  simple vertex superalgebra:
  $$ L_{-3/2} (\mathfrak{osp}(1 \vert 2)) = \mbox{Ker}_{ \overline F \otimes \Pi(0) ^{1/2}}  S^{osp}  $$
  where $$S^{\mathfrak{osp}} = \int   e^{\frac{\alpha}{2} + \nu} (z)dz :   \overline F \otimes \Pi(0) ^{1/2}  \rightarrow 
     F_{tw} \otimes \Pi_{\nu}$$ is a screening operator defined in Subsection    \ref{screening-real}.
     
      We prove the following results which can explain the coincidences  which we have noticed above:
     
          \begin{itemize}
     \item The vertex superalgebra $\overline{F}$ has the structure of an irreducible 
     $\mathcal{SF}(1)$--module and vice versa (cf. Proposition  \ref{isomorphism-fields-1}).
     
     \item The vertex superalgeba $L_{-3/2}(\g)$ has the structure of an irreducible 
     $\mathcal V^{(2)}$--module and vice versa (cf. Theorem \ref{isom-osp-N4-direct}). 
     \end{itemize}

\begin{table}[h!]
  \begin{center}
    \caption{ Correspondences between irreducible $\mathcal V^{(2)}$--modules and $L_{-3/2}(osp(1 \vert 2))$--modules (see Sect. \ref{corresp-modules})}
    \label{tab:table1}
    \begin{tabular}{l|c|c}
      \mbox{Vertex algebra} &    $\mathcal V^{(2)}$ &    $L_{-3/2}(\mathfrak{osp}(1 \vert 2))$ \\
      \hline
      \mbox{ordinary modules}  &  $\mathcal V^{(2)}$ &    $L_{-3/2}(\mathfrak{osp}(1 \vert 2))$ \\
      \hline 
       \mbox{category} $\mathcal{O}$ &  $\mathcal V^{(2)}$  & $L_{-3/2}(\mathfrak{osp}(1 \vert 2))$  \\ 
       & $L^{N=4} (U_{-1})$  & $L_{-3/2} ^{\mathfrak{osp}(1 \vert 2)} (-\omega_1)$ \\
      \hline
      \mbox{Indecmposable modules in } \ $\mathcal{O}$ & $\mathcal M_{-1} ^{N=4}$ & $\mathcal M_{-1} ^{\mathfrak{osp}(1 \vert 2) }$ \\ \hline
      \mbox{Relaxed h.w. modules}  { } &  \\ 
      &  $L^{N=4} (U_{-1, r})$    & $L_{-3/2} ^{\mathfrak{osp}(1 \vert 2)} (U_{0}(r))$   
    \end{tabular}
  \end{center}
\end{table}
It is important to note that modules in the same lines are isomorphic as $L_{-3/2}(\mathfrak{sl}(2))$--modules.
     
         In this article, we show that this interesting connection can be extended on a broader class of logarithmic vertex algebras.  For each $p \ge 2$, in Section \ref{generalizacija} we construct non-conformal vertex algebras $\mathcal A^{(p)}_{new}$ and $\mathcal V^{(p)} _{new}$  and show that
         \begin{itemize}         
\item  $\mathcal A^{(p)}_{new} \cong \mathcal A^{(p)}$;   $\mathcal W^{(p)}_{new} \cong \mathcal W^{(p)}$ as  modules for the Virasoro algebra;
\item $\mathcal V^{(p)}_{new} \cong \mathcal V^{(p)}$  as a modules for the affine Lie algebra $\widehat{\mathfrak{sl}}(2)$.
\end{itemize}
The proof is based on the construction of an explicit Virasoro (resp. $\widehat{\mathfrak{sl}}(2)$)--isomorphism.

\section{Clifford vertex superalgebra $F$ and its twisted modules}

We assume that the reader is familiar with basic concepts in the theory of vertex algebras and  the representation theory of  affine Lie algebras and the Virasoro algebras.
  Let $(V,Y, {\bf 1})$ be a vertex superalgebra (cf. \cite{K2},  \cite{LL}). The derivation in the vertex superalgebra $V$ is denoted by $D$. Let  $L^{Vir}(c,0)$  be the simple Virasoro vertex algebra of central charge $c$.
 Let $L^{Vir}(c,h)$  denotes the irreducible, highest weight module for the Virasoro algebra of central charge $c$ and highest weight $h$.

\subsection{Vertex superalgebra $F$ and its Virasoro vectors} The Clifford vertex superalgebra $F$ is the universal vertex superalgebra  generated by the odd fields  $\Phi_i $, $i=1,2$, and the following $\lambda$--brackets: $$ [  (\Phi _ i )_{\lambda} \Phi_j] =  \delta_{i,j}. $$ 

The vertex superalgebra $F$ has the structure of an irreducible module for the Clifford algebra $\textit{Cl}(A)$ associated to the vector superspace $A =\mathbb{C}\Phi_1 \oplus \mathbb{C}\Phi_2$ with generators 
 $\Phi_i  (r),  \ \ r \in \tfrac{1}{2} + {\Z}$  and anti-commutation relations
$$[\Phi_i  (r), \Phi_j  (s)  ]_+ = \delta_{r+s,0} \delta_{i,j}.$$

As a vector space
$F   = \bigwedge \left( \Phi_i (1/2-n) \ \vert \    \ n  \in {\Z}_{>0}, i=1,2  \right)$.

 Let $$\Psi^+ = \frac{1}{\sqrt{2}} (\Phi_1 + \sqrt{-1} \Phi_2), \quad \Psi^- = \frac{1}{\sqrt{2}} (\Phi_1 - \sqrt{-1} \Phi_2). $$
 Then
 $$ \Phi_1 = \frac{1}{\sqrt{2}} (\Psi^+ + \Psi^-), \quad  \Phi_2 = \frac{1}{\sqrt{-2}} (\Psi^+ -  \Psi^-). $$

By the boson-fermion correspondence we have that $F\cong V_{\Z \alpha}$, where $\langle \alpha, \alpha \rangle = 1$ and
$ \Psi^{\pm} = e^{\pm \alpha}. $
Therefore
$$   \Phi_1 = \frac{1}{\sqrt{2}} (e^{\alpha}  + e^{-\alpha}), \quad  \Phi_2 = \frac{1}{\sqrt{-2}} (e^{\alpha}  - e^{-\alpha}). $$
 \bea  \omega_{sf} &=& D\Psi^+ \Psi^- \nonumber \\ &=&   \frac{1}{2} \left ( 
 \Phi_1 (-\tfrac{3}{2} )   \Phi_1 (-\tfrac{1}{2})  +  \Phi_2 (-\tfrac{3}{2} )   \Phi_2 (-\tfrac{1}{2}) \right) \nonumber  \\
 && +  \frac{ \sqrt{-1}}{2} \left ( 
 \Phi_2 (-\tfrac{3}{2} )   \Phi_1 (-\tfrac{1}{2})  -  \Phi_1 (-\tfrac{3}{2} )   \Phi_2 (-\tfrac{1}{2}) \right) \nonumber \\
 D\Psi^+ \Psi^+ &=&  \frac{1}{2} \left ( 
 \Phi_1 (-\tfrac{3}{2} )   \Phi_1 (-\tfrac{1}{2})  -  \Phi_2 (-\tfrac{3}{2} )   \Phi_2 (-\tfrac{1}{2}) \right) \nonumber  \\
 && +  \frac{ \sqrt{-1}}{2} \left ( 
 \Phi_2 (-\tfrac{3}{2} )   \Phi_1 (-\tfrac{1}{2})  +  \Phi_1 (-\tfrac{3}{2} )   \Phi_2 (-\tfrac{1}{2}) \right) \nonumber 
 \eea

Note  that $  \omega_{sf}$ is a Virasoro vector of central charge $c=-2$ in $F$,
$ D\Psi^+ \Psi^+$ is a commutative vector.

  Let $  L_{sf}(n) =  ( \omega_{sf})_{n+1}$.
 Then 
 \bea  \omega&=&   \omega_{sf} +  D\Psi^+ \Psi^+ \nonumber \\
 & =&  \sqrt{-1} \Phi_2(-\tfrac{3}{2}) \Phi_1 (-\tfrac{1}{2} ){\bf 1} + \Phi_1(-\tfrac{3}{2}) \Phi_1 (-\tfrac{1}{2}){\bf 1} \nonumber \\
 &=& \frac{1}{2} \alpha(-1)^2 + \frac{1}{2} \alpha(-2) + e^{2\alpha} \label{def-vir-1} \eea
 is a Virasoro vector in $ F \cong V_{\Z \alpha}$ of central charge $c_{1,2}=-2$.   
 Let $L(n) = \omega_{n+1}$.

\subsection{Symplectic fermion vertex superalgebra $\mathcal{SF}(1)$}\label{s-fermion-uvod}

The symplectic fermion vertex algebra  $\mathcal{SF}(1)$ is defined  as
$$ \mathcal{SF}(1) =\mbox{Ker}_F \int \Psi^-(z) dz. $$
As a vector space, 
$$ \mathcal{SF}(1)  = \bigwedge \left ( \Psi^-  (-r+\tfrac{1}{2}),\Psi^+ (-r-\tfrac{1}{2}) \ \vert \  r \in  {\Z}_{> 0} \right). $$
The vertex superalgebra $ \mathcal{SF}(1) $  is a simple,  $C_2$--cofinite ${\Z}_{\ge 0}$--graded vertex operator superalgebra with conformal vector $\omega_{sf}$ (cf. \cite{Abe}). It is freely generated by the fields
$a^+ = \Psi^- ,  a^- = D\Psi^+$ of conformal weights one.  Set
$ a^+ (n) = \Psi^-(n+1/2), \quad a^-(n) = -n  \Psi^+(n-1/2)$.
Then we have
\bea 
[a^{\pm} (n), a^{\pm}(m)]_+ = 0, \quad [a^{+} (n), a^{-}(m)]_+= n \delta_{n+m,0}. \label{com-sym-ferm}
\eea

The PBW basis of  $\mathcal{SF}(1)$ is
 given by
$$ a^-(-m_1) \cdots a^-  (-m_r)  a^+ (-n_1) \cdots a^- (-n_s) {\bf 1}$$ where $r, s \in {\Z}_{\ge 0}$,  $m_i, n_i \in {\Z}_{>0}$,
$ m_1 > \cdots > m_r \ge 1, \  n_1 > \cdots > n_s \ge 1$.
 
 In other words $\mathcal{SF}(1)$ is the universal affine vertex superalgebra  of level $1$ associated to the Lie superalgebra $\mathfrak{psl}(1\vert 1)$, i.e.
 $$ \mathcal{SF}(1) = L_1( \mathfrak{psl}(1\vert 1)) = V^1 (\mathfrak{psl}(1\vert 1)). $$
 So in order to construct a $\mathcal{SF}(1)$--module, it is enough to construct a restricted $\widehat{\mathfrak{psl}}(1\vert 1)$--module of level $1$.

\subsection{Twisted module $F^{tw}$} Let $\Theta$  be the automorphism of the vertex superalgebra $F$ such that 
$$ \Phi_i \mapsto -\Phi_i, i =1,2. $$
Then a $\Theta$--twisted $F$--module is realised as 
$$(F^{tw}, Y_{tw}):= (F, Y(\Delta(\alpha/2, z)\cdot, z)). $$
$F^{tw}$ is realized as $V_{\Z \alpha + \alpha /2}$.
Note that $v_{tw}=e^{\tfrac{\alpha}{2}}$ is a singular vector for the Virasoro algebra such that
$$U(Vir). v_{tw} \cong L^{Vir}(-2, -\frac{1}{8}). $$

 \section{Vertex superalgebra $\overline F$ and its duality   with $\mathcal{SF}(1)$ }
 \label{new-simpl-ferm}
 
 In this section we introduce the  subalgebra $\overline F$ of $F$ with a Virasoro vector $\omega$ of central charge $c_{1,2} =-2$ defined by (\ref{def-vir-1}).  $\overline F$ is not a vertex operator superalgebra since $L(-1) \ne D$, but it shares similar properties as symplectic fermion vertex superalgebra $\mathcal{SF}(1)$. In particular, we will show that $\overline F$ is isomorphic to $\mathcal{SF}(1)$ as a module for the Virasoro algebra. This case will be a motivated example for introducing  non-conformal duals of logarithmic vertex algebras in Section \ref{generalizacija}.
 
 \vskip 5mm

   Let $G= \Phi_2(\tfrac{1}{2})$.  Define the following vertex subalgebra of $F$: 
 \bea \overline F = \mbox{Ker}_F G= \bigwedge \left ( \Phi_1 (-r+\tfrac{1}{2}),\Phi_2 (-r-\tfrac{1}{2}) \ \vert \  r \in  {\Z}_{> 0} \right). \label{basis} \eea

 \begin{prop} We have:
 \begin{itemize}
 \item[(1)] $ \overline F$  is a simple vertex superalgebra.
 \item[(2)] $ \overline F$  is  strongly generated by $\Phi_1(z)$ and $\partial_z \Phi_2(z)$
 \item[(3)] $ \overline F$ is $C_2$--cofinite, non-rational vertex superalgebra.
 \end{itemize}
 \end{prop}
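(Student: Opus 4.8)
The plan is to work throughout inside the Clifford algebra $\textit{Cl}(A)$ acting on $F$, exploiting the explicit monomial basis (\ref{basis}). First I would record that $G=\Phi_2(\tfrac12)$ is exactly the residue $\mbox{Res}_z\,Y(\Phi_2,z)=\int\Phi_2(z)\,dz$, i.e. the zero mode $(\Phi_2)_{(0)}$; as the residue of a field it is an odd derivation of the vertex superalgebra $F$, so $\overline F=\mbox{Ker}_F\,G$ is automatically a vertex subalgebra. On the exterior algebra $F=\bigwedge(\Phi_i(\tfrac12-n))$ the operator $G$ acts as the odd contraction $\partial/\partial\Phi_2(-\tfrac12)$, since $[\Phi_2(\tfrac12),\Phi_2(-\tfrac12)]_+=1$ while $\Phi_2(\tfrac12)$ supercommutes with every other creation mode. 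This makes the basis (\ref{basis}) transparent: $\mbox{Ker}_F\,G$ is spanned by the monomials not containing the factor $\Phi_2(-\tfrac12)$.

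For (2) I would compute the component operators of the candidate generators. From $Y(\partial\Phi_2,z)=\partial_z\sum_r\Phi_2(r)z^{-r-1/2}$ one reads off that the modes of $\partial\Phi_2$ are $-(r+\tfrac12)\Phi_2(r)$, $r\in\tfrac12+\Z$, which is a nonzero multiple of $\Phi_2(r)$ precisely when $r\neq-\tfrac12$. Hence the negative modes of $\partial\Phi_2$ applied to ${\bf 1}$ produce exactly $\Phi_2(-\tfrac32),\Phi_2(-\tfrac52),\dots$ and never $\Phi_2(-\tfrac12)$, while the modes of $\Phi_1$ produce $\Phi_1(-\tfrac12),\Phi_1(-\tfrac32),\dots$. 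These are precisely the creation operators occurring in (\ref{basis}), and iterating them reproduces the full monomial basis; therefore $\Phi_1$ and $\partial\Phi_2$ strongly generate $\overline F$, which is (2).

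For (1), let $\mathcal A$ be the associative algebra generated by all modes of the strong generators, i.e. by $\{\Phi_1(r):r\in\tfrac12+\Z\}$ together with $\{\Phi_2(n):n\in\tfrac12+\Z,\ n\neq-\tfrac12\}$. Any nonzero ideal $I$ of $\overline F$ is stable under every component operator $a_{(m)}$ with $a\in\overline F$, in particular under $\mathcal A$. Now $\overline F$ is an irreducible $\mathcal A$-module: the only generator pairing with the excluded mode $\Phi_2(-\tfrac12)$ is $\Phi_2(\tfrac12)=G$, which is therefore central in $\mathcal A$, squares to zero, and acts as $0$ on $\overline F$; modulo this central element $\mathcal A$ is the Clifford algebra of the nondegenerate space spanned by the remaining modes, and $\overline F$ is its standard Fock (semi-infinite wedge) module with vacuum ${\bf 1}$ and the evident creation/annihilation polarization, hence irreducible. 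Thus $\mathcal A v=\overline F$ for every $0\neq v\in\overline F$, so $I=\overline F$ and $\overline F$ is simple.

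For (3), $C_2$-cofiniteness is immediate from (2): the Zhu $C_2$-algebra $R=\overline F/C_2(\overline F)$ is supercommutative and generated by the images of the strong generators, and since both $\Phi_1$ and $\partial\Phi_2$ are odd, these images square to zero; hence $R$ is spanned by $1,\overline{\Phi_1},\overline{\partial\Phi_2},\overline{\Phi_1}\,\overline{\partial\Phi_2}$, so $\dim R\le 4<\infty$. For non-rationality I would show that the module category of $\overline F$ is not semisimple: the Virasoro decomposition (\ref{dec-f-new-uvod}) realises $\overline F$ as an infinite extension of the non-rational $c=-2$ algebra $L^{Vir}(-2,0)$, and the point is to upgrade this to a genuine non-split $\overline F$-module by exhibiting an indecomposable (logarithmic) module on which $L(0)$ acts with a Jordan block, for instance one induced from the $c=-2$ triplet/symplectic-fermion logarithmic modules or built from the twisted module $F^{tw}$. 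I expect this last step to be the main obstacle, since $C_2$-cofiniteness alone does not decide rationality; by contrast, (1) and (2) and the finiteness of $R$ reduce to the explicit Clifford-algebra bookkeeping above.
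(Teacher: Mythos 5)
Your arguments for (1), (2), and the $C_2$--cofiniteness part of (3) are correct and in fact more detailed than the paper, which dismisses (1) and (2) as clear and records only the four-dimensional spanning set of $\overline F/C_2(\overline F)$; your Clifford--Fock irreducibility argument and the mode bookkeeping for the strong generators are sound (note that $\Phi_2(\tfrac12)$ lies in the radical of the form restricted to the modes you list and kills $\overline F$, and the remaining modes pair nondegenerately, exactly as you say).

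The genuine gap is in non-rationality, and you have correctly identified it yourself: you propose to build a logarithmic $\overline F$--module with a Jordan block for $L(0)$ but do not produce one, and you call this ``the main obstacle.'' The paper's route is much more elementary and closes the gap immediately: take the module to be $F$ itself. Since $\overline F=\mbox{Ker}_F\,G$ is a vertex subalgebra of $F$, the superalgebra $F$ is an $\overline F$--module containing $\overline F$ as a proper submodule, so it is reducible; and it is indecomposable, because $G$ supercommutes with all modes $a_{(n)}$, $a\in\overline F$ (as $Ga=0$), hence preserves any parity-homogeneous $\overline F$--submodule $W$, while $G(F)\subseteq\overline F$ forces $G(W)\subseteq W\cap\overline F$ --- so a complement $W$ to $\overline F$ would satisfy $G(W)=0$, i.e.\ $W\subseteq\overline F$, a contradiction. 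An indecomposable reducible module rules out rationality with no need for logarithmic structure or Jordan blocks. I recommend you replace your last step with this argument.
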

  \begin{proof}
  The assertions (1) and (2) are clear. Using the basis (\ref{basis}) of $\overline{F}$ we get
  $$ \overline{F}/ C_2(\overline{F}) =\mbox{span}_{\C}\{ \overline{\bf 1}, \overline{\Phi_1}, \overline{D\Phi_2}, \overline{ : \Phi_1 D \Phi_2:} 
\}, $$
  where  for $v \in \overline{F}$, we write $\overline{v} = v + C_2(\overline{F}) \in  \overline{F}/ C_2(\overline{F})$.
  
Finally, as in the case of symplectic fermion, we see that  $F$ is indecomposable, but reducible $\overline{F}$--module. Therefore $\overline{F}$ is non-rational vertex superalgebra.
  \end{proof}
   
Recall   that $ \omega_{sf}$ is a Virasoro vector of central charge $c=-2$ in $F$,
$ D\Psi^+ \Psi^+ = e^{2\alpha}$ is a commutative vector.
 Then 
 $$ \omega= \omega_{sf} +  D\Psi^+ \Psi^+ = \sqrt{-1} \Phi_2(-\tfrac{3}{2}) \Phi_1 (-\tfrac{1}{2} ){\bf 1} + \Phi_1(-\tfrac{3}{2}) \Phi_1 (-\tfrac{1}{2}){\bf 1} $$
 is a Virasoro vector in $\overline F$ of central charge $c=-2$.

 The conformal vector $\omega$ does not define on  $\overline F$ the structure of a vertex operator superalgebra since the derivation $D =  L_{sf}(-1)$ on $\overline{F}$ is not  $L(-1)$. Moreover, 
 $Q =  L(-1) - D=  (D\Psi^+ \Psi^+ )_0$ is a well defined operator   which commutes with the action  of the  Virasoro algebra    $L(n) = L_{sf}(n) + (D\Psi^+ \Psi^+)_{n+1}$,  since it commutes with $L_{sf}(n)$ (cf. \cite{AM08}) and obivously with $(D\Psi^+ \Psi^+)_{n+1}$.
  
  The proof of the following lemma is clear.
  \begin{prop} We have:
  \begin{itemize}
  \item $ [Q, G] = 0$.
  \item $Q$ is a derivation on the vertex superalgebra  $\overline F$ such that
  $[Q, L(n)] =0$ for $n \in {\Z}$.
  \end{itemize}
  \end{prop}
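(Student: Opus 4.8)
The plan is to exploit the fact that both operators are \emph{zero modes} in the lattice realization $F\cong V_{\Z\alpha}$. There $Q=(e^{2\alpha})_0$, and since $\Phi_2(z)=\sum_{r}\Phi_2(r)z^{-r-1/2}$ has $\Phi_2(1/2)$ as its residue, also $G=\Phi_2(1/2)=(\Phi_2)_0$. The whole argument then rests on three standard vertex-algebra principles: (i) the zero-mode commutator formula $[a_0,b_0]=(a_0b)_0$; (ii) the zero mode of any translate vanishes, since $(Dv)_m=-m\,v_{m-1}$ gives $(Dv)_0=0$; and (iii) for any $a$ the operator $a_0$ is a derivation of all products, because $[a_0,Y(u,z)]=Y(a_0u,z)$, i.e. $[a_0,u_n]=(a_0u)_n$.

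First I would establish $[Q,G]=0$. By (i), $[Q,G]=\bigl((e^{2\alpha})_0\Phi_2\bigr)_0$, so it suffices to show that $(e^{2\alpha})_0\Phi_2$ is a total derivative and invoke (ii). Using $\Phi_2=\tfrac{1}{\sqrt{-2}}(e^{\alpha}-e^{-\alpha})$ and the lattice operator product
\[
Y(e^{2\alpha},z)e^{\gamma}=\epsilon(2\alpha,\gamma)\,z^{\langle 2\alpha,\gamma\rangle}\exp\Bigl(\sum_{n>0}\tfrac{2\alpha(-n)}{n}z^{n}\Bigr)e^{2\alpha+\gamma},
\]
the positivity $\langle 2\alpha,\alpha\rangle=2$ forces $(e^{2\alpha})_0e^{\alpha}=0$, while reading off the $z^{-1}$-coefficient for $\gamma=-\alpha$ (where $\langle 2\alpha,-\alpha\rangle=-2$) gives $(e^{2\alpha})_0e^{-\alpha}=2\epsilon(2\alpha,-\alpha)\,\alpha(-1)e^{\alpha}=2\epsilon(2\alpha,-\alpha)\,D e^{\alpha}$. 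Hence $(e^{2\alpha})_0\Phi_2$ is a scalar multiple of $De^{\alpha}$, so its zero mode is $0$.

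Next, the derivation property. By (iii), $Q=(e^{2\alpha})_0$ is automatically a derivation of every $n$-th product of $F$, so the only thing to check is that $Q$ preserves the subalgebra $\overline F=\mathrm{Ker}_F\,G$. This is immediate from $[Q,G]=0$: for $v\in\overline F$ we get $G(Qv)=Q(Gv)+[G,Q]v=0$, so $Qv\in\mathrm{Ker}_F\,G=\overline F$, and the restriction of a derivation to an invariant subalgebra is again a derivation.

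Finally, $[Q,L(n)]=0$. Splitting $L(n)=L_{sf}(n)+(e^{2\alpha})_{n+1}$, the term $(e^{2\alpha})_{n+1}$ commutes with $Q=(e^{2\alpha})_0$ because $e^{2\alpha}$ is a commutative vector, so all its modes commute. For the Virasoro part I would use that $e^{2\alpha}$ is a primary vector of conformal weight exactly $1$ with respect to $\omega_{sf}$: a short computation with $\omega_{sf}=\tfrac12\alpha(-1)^2+\tfrac12\alpha(-2)$ gives $L_{sf}(0)e^{2\alpha}=e^{2\alpha}$ and $L_{sf}(m)e^{2\alpha}=0$ for $m\ge 1$. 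The commutator formula then yields $[L_{sf}(n),(e^{2\alpha})_0]=(L_{sf}(-1)e^{2\alpha})_{n+1}+(n+1)(e^{2\alpha})_{n}$, and since $(L_{sf}(-1)e^{2\alpha})_{n+1}=-(n+1)(e^{2\alpha})_{n}$ by (ii), the two terms cancel; alternatively one cites the corresponding screening fact for the symplectic fermion from \cite{AM08}. The only point requiring care is pure bookkeeping --- fixing the lattice cocycle $\epsilon$ and confirming that the sole surviving contributions to $(e^{2\alpha})_0\Phi_2$ and to $[Q,L_{sf}(n)]$ are exact --- rather than any genuine difficulty, which is why the statement is ``clear''.
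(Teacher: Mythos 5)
Your proposal is correct and follows essentially the same route the paper takes: the paper merely remarks that $Q=(D\Psi^+\Psi^+)_0=(e^{2\alpha})_0$ commutes with $L_{sf}(n)$ (citing \cite{AM08}, where $e^{2\alpha}$ is the weight-one long screening current) and obviously with $(e^{2\alpha})_{n+1}$, and then declares the lemma clear. You have simply supplied the omitted details --- the zero-mode commutator computations for $[Q,G]$ and $[Q,L_{sf}(n)]$ --- and they check out.
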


 Define the following operators
\bea  \varphi^- (n) &=& a^-(n) =-\frac{n}{\sqrt{2}}\left(   \Phi_1(  n - \tfrac{ 1 }{2}) + \sqrt{-1}   \Phi_2(n - \tfrac{ 1 }{2}) \right). \nonumber \\
 \varphi^+ (n) &=&  -\sqrt{2}  ( n   \Phi_1( n + \tfrac{ 1 }{2}) +  \sqrt{-1} (n+1)  \Phi_2( n + \tfrac{ 1 }{2}) )  \nonumber  \\
 &=&-(2n +1 ) \Psi ^+ ( n + \tfrac{ 1 }{2}) +     \Psi ^- ( n + \tfrac{ 1 }{2}).  \nonumber 
 \eea
 Note that
 $$ \omega = \varphi^+ (-1) \varphi^- (-1) {\bf 1}. $$
 
 Let $\varphi^{\pm} (z) = \sum_{n \in {\Z} } \varphi^{\pm} (n) z^{-n-1}$.
 Then
 
 \bea
 \varphi^+ (z) &=&   z \sqrt{2} \left(  \partial_z \Phi_1(z)  +  \sqrt{-1} \partial_z \Phi_2(z) \right) + \sqrt{2} \Phi_1(z) \label{embed-1} \\
 \varphi^- (z) &=&  \frac{\sqrt{2}}{2} \left(\partial _z \Phi_1(z) +\sqrt{-1}  \partial _z \Phi_2(z) \right) \label{embed-2}
 \eea
 This implies:
 \bea
 \Phi_1(z) &=&\frac{1}{\sqrt{2}} (   \varphi^+ (z)- 2 z \varphi^-(z)    ) \label{inverse-1}\\
 \partial_z \Phi_2(z) &=&-\sqrt{-2} \varphi^-(z) -\frac{\sqrt{-1}}{\sqrt{2}} \partial_z (  \varphi^+ (z) -  2 z \varphi^-(z) )  . \label{inverse-2}
 \eea
 By a direct calculation we have:
 \begin{lemma} \label{comm-vir} We have:
  $$[L(n),  \varphi^{\pm} (m)  ] =- m \varphi^{\pm} (n+m), \  [\varphi^{\pm}  (m) ,  \varphi^{\pm} (n)  ]_{+}=0, \ [\varphi^{+} (n),\varphi^{-} (m)]_{+} = n \delta_{n+m,0}. $$
 \end{lemma}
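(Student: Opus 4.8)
The plan is to split the statement into two independent parts: first the three (anti-)commutation relations among the $\varphi^{\pm}(n)$, which involve only the underlying Clifford structure, and then the commutation relations with the Virasoro modes $L(n)$, which I would deduce from the first part together with the fact that $\omega=\varphi^+(-1)\varphi^-(-1){\bf 1}$.

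For the anticommutators I would substitute the defining expressions $\varphi^-(m)=-\tfrac{m}{\sqrt2}(\Phi_1(m-\tfrac12)+\sqrt{-1}\,\Phi_2(m-\tfrac12))$ and $\varphi^+(n)=-\sqrt2\,(n\Phi_1(n+\tfrac12)+\sqrt{-1}(n+1)\Phi_2(n+\tfrac12))$, and expand bilinearly using only the Clifford relations $[\Phi_i(r),\Phi_j(s)]_+=\delta_{r+s,0}\delta_{i,j}$. Since that bracket is diagonal in $i,j$, every mixed $\Phi_1$--$\Phi_2$ term drops out and one is left with two diagonal contributions. For $[\varphi^{\pm}(m),\varphi^{\pm}(n)]_+$ the two surviving terms carry opposite signs from $(\sqrt{-1})^2=-1$ and cancel (in the $\varphi^{+}\varphi^{+}$ case after imposing the Kronecker constraint $n+m+1=0$ on the coefficients). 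For $[\varphi^+(n),\varphi^-(m)]_+$ the same mechanism produces $m\,(n-(n+1))\,\delta_{n+m,0}=-m\,\delta_{n+m,0}$, and on the support $m=-n$ of the Kronecker delta this equals $n\,\delta_{n+m,0}$, as required. This is the routine step; the only care needed is bookkeeping of the half-integer mode indices $r=n\pm\tfrac12$.

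For the Virasoro relations I would exploit that the pair $\varphi^{+},\varphi^{-}$ satisfies exactly the relations (\ref{com-sym-ferm}) of the generators $a^{+},a^{-}$ of $\mathcal{SF}(1)$, and that $\omega=\varphi^+(-1)\varphi^-(-1){\bf 1}$ is built from them in the same bilinear way that $\omega_{sf}$ is built from $a^{\pm}$. Concretely, the first part gives the single contraction $\varphi^{+}(z)\varphi^{-}(w)\sim (z-w)^{-2}$, so computing the operator product $\omega(z)\varphi^{\pm}(w)=\,:\!\varphi^{+}\varphi^{-}\!:(z)\,\varphi^{\pm}(w)$ by Wick's theorem yields $\omega(z)\varphi^{\pm}(w)\sim (z-w)^{-2}\varphi^{\pm}(w)+(z-w)^{-1}\partial_w\varphi^{\pm}(w)$. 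Thus each $\varphi^{\pm}$ is a Virasoro primary of conformal weight one for $\omega$, which is precisely the commutator $[L(n),\varphi^{\pm}(m)]=-m\,\varphi^{\pm}(n+m)$.

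I expect the genuine subtlety to lie here, in the reconciliation of two bookkeepings of conformal weight. With respect to $\omega_{sf}$ alone the state $\varphi^{-}(-1){\bf 1}=\tfrac{1}{\sqrt2}(D\Phi_1+\sqrt{-1}\,D\Phi_2)$ has weight $\tfrac32$, not $1$, so the weight-one primary property holds only for the corrected vector $\omega=\omega_{sf}+e^{2\alpha}$, and one must verify that the commutative summand $e^{2\alpha}=D\Psi^+\Psi^+$ contributes exactly the shift turning the $z^{-n-1}$-indexing into the weight-one convention. If one prefers a hands-on verification to the Wick-theorem argument, I would instead write $L(n)=L_{sf}(n)+(e^{2\alpha})_{n+1}$ and compute $[L_{sf}(n),\varphi^{\pm}(m)]$ and $[(e^{2\alpha})_{n+1},\varphi^{\pm}(m)]$ separately from (\ref{embed-1})--(\ref{embed-2}), the latter via the lattice realization $F\cong V_{\Z\alpha}$ and the operator $Q=(D\Psi^+\Psi^+)_0$, and then show that the two contributions add up to $-m\,\varphi^{\pm}(n+m)$.
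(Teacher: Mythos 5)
Your first part is correct and is exactly the ``direct calculation'' the paper intends for the anticommutators; the bookkeeping you describe goes through. For the Virasoro relations the paper does precisely your hands-on fallback: it computes $[L(n),\Phi_1(m+\tfrac12)]$ and $[L(n),\Phi_2(m+\tfrac12)]$ directly from $\omega=\omega_{sf}+D\Psi^+\Psi^+$ and then assembles $[L(n),\varphi^{\pm}(m)]=-m\varphi^{\pm}(n+m)$ from the defining linear combinations, so that route is sound and matches the paper.

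Your preferred Wick-theorem route has a gap, and it is not where you place it. The crux is not the weight bookkeeping; in fact $\varphi^-(-1)\mathbf 1=D\Psi^+=a^-(-1)\mathbf 1$ already has $\omega_{sf}$-weight $1$, not $\tfrac32$ (the $\tfrac32$ would be its weight for the standard free-fermion Virasoro, which is not the one in play, and indeed $[L_{sf}(n),\varphi^-(m)]=-m\varphi^-(n+m)$ holds with no correction from $e^{2\alpha}$; only $\varphi^+$ genuinely needs both summands of $\omega$). The crux is the identification of $L(z)=Y_F(\omega,z)$ with the normally ordered product of the fields $\varphi^{\pm}(z)$. Since $\varphi^+(z)$ carries an explicit factor of $z$ (formula (\ref{embed-1})), it is not of the form $Y_F(v,z)$ and is not translation-covariant with respect to the derivation $D$ of $F$, so the reconstruction identity $Y_F(a_{(-1)}b,z)={:}Y_F(a,z)Y_F(b,z){:}$ does not apply, and the state identity $\omega=\varphi^{\pm}(-1)\varphi^{\mp}(-1)\mathbf 1$ does not by itself yield the field identity $L(z)={:}\varphi^{\pm}(z)\varphi^{\mp}(z){:}$ on which your OPE computation rests. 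That field identity does hold, but verifying it requires (i) checking that the explicitly $z$-dependent cross term, a multiple of $z\,{:}\partial_z\Psi^+(z)\,\partial_z\Psi^+(z){:}$, vanishes identically because $\Psi^+$ is square-zero, and (ii) fixing the order of the factors: $\varphi^+(-1)$ and $\varphi^-(-1)$ anticommute, so $\varphi^+(-1)\varphi^-(-1)\mathbf 1=-\varphi^-(-1)\varphi^+(-1)\mathbf 1$, only one ordering equals $+\omega$, and the wrong choice flips the sign of the resulting commutator. Once (i) and (ii) are supplied the Wick argument closes, but at that point it is no shorter than the direct mode computation, which is what the paper does and what your fallback already provides.
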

 
 \begin{proof} By a direct calculation we get:
 \bea
 [L(n),  \Phi_1 (m+\tfrac{1}{2})] &=&- (n+1 + 2m) \Phi_1 ( n+m+\tfrac{1}{2})
 - \sqrt{-1}  (n+m +1)  \Phi_2( n+m+\tfrac{1}{2})  
  \nonumber \\
  \  [ L(n),  \Phi_2 (m +\tfrac{1}{2})]&=&  -m \sqrt{-1} \Phi_1 (n+m+\tfrac{1}{2}) \nonumber 
 \eea
 This implies  
\bea  [L(n),  \varphi^-(m)  ] &=& - m  \varphi^- (n+m), \nonumber \\
 \  [L(n ),  \varphi^+ ( m)  ] &=&  \sqrt{2} m (- (n+1 + 2m) \Phi_1 ( n+m+\tfrac{1}{2})
 - \sqrt{-1}  (n+m +1)  \Phi_2( n+m+\tfrac{1}{2}) ) \nonumber \\
 && + \sqrt{2} m (m+1)  \Phi_1 ( n+m+\tfrac{1}{2})   \nonumber  \\
 &=& - m \sqrt{2} (   (n+m)    \Phi_1 ( n+m+\tfrac{1}{2})  + \sqrt{-1}  (n+m + 1)   \Phi_2( n+m+\tfrac{1}{2}) ) \nonumber \\
 &=& - m \varphi^+ (n+m). \nonumber 
 \eea
 \end{proof}

Let $\widetilde{\mathcal{SF}}(1)$ be the vertex superalgebra generated by local fields $\varphi^+(z), \varphi^-(z)$ acting on $\overline{F}$.

 \begin{prop} \label{isomorphism-fields-1}We have:
\item[(1)]   $\widetilde{\mathcal{SF}}(1)$ is isomorphic to $\mathcal{SF}(1)$.

\item[(2)] The vertex superalgebra $\overline F$ has the structure of  an irreducible $\mathcal{SF}(1)$--module, denoted by $ (\overline{F}, \mathcal Y^{\mathcal{SF}(1)} _{\overline F} (\cdot, z) )$, which is  uniquely determined by
$  \mathcal Y^{\mathcal{SF}(1)} _{\overline F} (a^{\pm},  z)  = \varphi^{\pm}(z). $
 The action of the Virasoro field is $ \mathcal Y^{\mathcal{SF}(1)} _{\overline F} (\omega_{sf},  z)  = L(z)$.

\item[(3)] The vertex superalgebra $\mathcal{SF}(1)$ has the structure of  an irreducible $\overline F$--module, denoted by $ (\mathcal{SF}(1), \mathcal Y_{\mathcal{SF}(1)} ^{\overline F} (\cdot, z) )$,
determined by formulas (\ref{inverse-1})-(\ref{inverse-2}).
\end{prop}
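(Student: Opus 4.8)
The engine of the whole proposition is the observation, recorded in Lemma \ref{comm-vir}, that the operators $\varphi^{\pm}(n)$ acting on $\overline F$ satisfy \emph{exactly} the anticommutation relations (\ref{com-sym-ferm}) obeyed by the generating modes $a^{\pm}(n)$ of $\mathcal{SF}(1)$; these are the defining relations of the affine Lie superalgebra $\widehat{\mathfrak{psl}}(1\vert 1)$ at level $1$. I would first note that $\varphi^{+}(z)$ and $\varphi^{-}(z)$ are mutually local: by (\ref{embed-1})--(\ref{embed-2}) they are $\C[z]$--combinations of the pairwise local free fermion fields $\Phi_i(z),\partial_z\Phi_i(z)$, and multiplying a commutator by a sufficiently high power of $(z-w)$ annihilates it irrespective of the polynomial coefficients. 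The representation is moreover restricted, since $\varphi^{\pm}(n)v=0$ for $n\gg 0$ on every $v\in\overline F$.

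For part (1), the modes $\varphi^{\pm}(n)$ thus define a restricted level--$1$ representation of $\widehat{\mathfrak{psl}}(1\vert 1)$ on $\overline F$; since $\mathcal{SF}(1)=V^1(\mathfrak{psl}(1\vert 1))$ is the universal such object, the universal property yields a vertex superalgebra homomorphism $\mathcal{SF}(1)\to\widetilde{\mathcal{SF}}(1)$, $a^{\pm}\mapsto\varphi^{\pm}$, surjective by the definition of $\widetilde{\mathcal{SF}}(1)$. Its kernel is an ideal of the simple vertex superalgebra $\mathcal{SF}(1)$, and is proper because $\varphi^{\pm}\ne 0$; hence the map is an isomorphism. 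For part (2), the same representation realizes $\overline F$ as a $\mathcal{SF}(1)$--module with $\mathcal Y^{\mathcal{SF}(1)}_{\overline F}(a^{\pm},z)=\varphi^{\pm}(z)$. Under the correspondence $a^{\pm}\leftrightarrow\varphi^{\pm}$ the conformal vector $\omega_{sf}$ of $\mathcal{SF}(1)$ is carried, up to normalization, to $\varphi^{+}(-1)\varphi^{-}(-1)\mathbf 1=\omega$, so the attached module field is $\mathcal Y^{\mathcal{SF}(1)}_{\overline F}(\omega_{sf},z)=Y(\omega,z)=L(z)$. For irreducibility, together with the sharper statement that $\overline F\cong\mathcal{SF}(1)$ as $\mathcal{SF}(1)$--modules, I observe that $\mathbf 1\in\overline F$ is annihilated by $\varphi^{\pm}(n)$ for all $n\ge 0$, so $\overline F$ is a vacuum module and the state comparison map $\mathcal{SF}(1)\to\overline F$, $w\mapsto w_{(-1)}\mathbf 1$, is a homomorphism of $\mathcal{SF}(1)$--modules. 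A triangularity argument shows it is bijective: the single--fermion states $\varphi^{\pm}(-n)\mathbf 1$ span the one--particle subspace of $\overline F$ level by level, since $\varphi^{+}(-1)\mathbf 1=\sqrt 2\,\Phi_1(-\tfrac12)\mathbf 1$, while for $\ell\ge 2$ the pair $\{\varphi^{-}(-(\ell-1))\mathbf 1,\varphi^{+}(-\ell)\mathbf 1\}$ is a basis of $\mathrm{span}\{\Phi_1(-\ell+\tfrac12)\mathbf 1,\Phi_2(-\ell+\tfrac12)\mathbf 1\}$, the relevant $2\times 2$ coefficient determinant being $-\sqrt{-1}\ne 0$ (note that the absence of $\varphi^{-}(0)$ matches the absence of $\Phi_2(-\tfrac12)$ in $\overline F$). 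As both $\overline F$ and $\mathcal{SF}(1)$ are free exterior algebras on their respective single--fermion generators, this makes the map a linear isomorphism, and simplicity of $\mathcal{SF}(1)$ then gives irreducibility.

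Part (3) I would prove by the mirror construction. The inverse relations (\ref{inverse-1})--(\ref{inverse-2}) express the strong generators $\Phi_1(z)$ and $\partial_z\Phi_2(z)$ of $\overline F$ as explicit $\C[z]$--combinations of $\varphi^{+}(z),\varphi^{-}(z)$. Substituting the generating fields $a^{\pm}(z)$ of $\mathcal{SF}(1)$ for $\varphi^{\pm}(z)$ defines fields on $\mathcal{SF}(1)$, for instance $\mathcal Y^{\overline F}_{\mathcal{SF}(1)}(\Phi_1,z)=\tfrac{1}{\sqrt 2}\bigl(a^{+}(z)-2z\,a^{-}(z)\bigr)$. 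Because the $a^{\pm}(n)$ satisfy the very relations (\ref{com-sym-ferm}) that the $\varphi^{\pm}(n)$ do, these transported fields reproduce, OPE for OPE, the brackets of $\Phi_1,\partial_z\Phi_2$ inside $\overline F$; a reconstruction argument then equips $\mathcal{SF}(1)$ with an $\overline F$--module structure determined by (\ref{inverse-1})--(\ref{inverse-2}), and the triangular spanning argument of part (2), read backwards, gives irreducibility.

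The principal obstacle is the non--conformality of $\overline F$: because $L(-1)\ne D$ and the operator $Q=L(-1)-D$ is nonzero, the transported fields in part (3) carry genuine explicit $z$--dependence (the factor $2z$ above), so the $\varphi^{\pm}(z)$ are local fields on the underlying space but are \emph{not} vertex operators $Y(\cdot,z)$ for the original $\overline F$--structure. The delicate point is therefore to formulate and verify the module axioms in this non--conformal setting---in particular to match the two translation operators $D$ and $L(-1)$ across the duality---rather than any of the algebraic identities, all of which descend from Lemma \ref{comm-vir} and the simplicity of $\mathcal{SF}(1)$.
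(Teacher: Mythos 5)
Your argument is correct, and for parts (1) and (3) it coincides with the paper's: the isomorphism $\widetilde{\mathcal{SF}}(1)\cong\mathcal{SF}(1)$ comes from Lemma \ref{comm-vir} plus the identification $\mathcal{SF}(1)=V^1(\mathfrak{psl}(1\vert 1))=L_1(\mathfrak{psl}(1\vert 1))$, and the module structures come from the theory of local fields applied to the mutually inverse relations (\ref{embed-1})--(\ref{embed-2}) and (\ref{inverse-1})--(\ref{inverse-2}). Where you genuinely diverge is the irreducibility in part (2). The paper's mechanism is a two-line symmetry argument: any $\mathcal{SF}(1)$--submodule $U\subseteq\overline F$ is stable under the modes $\varphi^{\pm}(n)$, hence by (\ref{inverse-1})--(\ref{inverse-2}) stable under the modes of the strong generators $\Phi_1(z)$, $\partial_z\Phi_2(z)$, so $U$ is an ideal of the simple vertex superalgebra $\overline F$; the same argument read backwards gives (3). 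You instead exhibit an explicit $\mathcal{SF}(1)$--module isomorphism $\mathcal{SF}(1)\to\overline F$, $w\mapsto w_{(-1)}\mathbf 1$, using that $\mathbf 1$ is vacuum-like and a level-by-level change-of-basis between $\{\varphi^+(-\ell)\mathbf 1,\varphi^-(-\ell+1)\mathbf 1\}$ and $\{\Phi_1(-\ell+\tfrac12)\mathbf 1,\Phi_2(-\ell+\tfrac12)\mathbf 1\}$ on one-particle states (the matching of $\varphi^-(0)=0$ with the absence of $\Phi_2(-\tfrac12)$ is exactly the right check). This costs a little more work but buys more: it proves directly that $\overline F\cong\mathcal{SF}(1)$ as $\mathcal{SF}(1)$--modules, a fact the paper asserts immediately after the proposition and needs for Theorem \ref{sing-vekt-1}, and it anticipates the explicit intertwiner $\Omega=\exp[e^{2\alpha}_1]$ of Section \ref{generalizacija}. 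Your closing caveat about $L(-1)\neq D$ correctly identifies the only delicate point in verifying the module axioms; the paper passes over it by citing \cite{LL}, whose local-fields formalism does not require the translation operator of the target to be $L(-1)$.
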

\begin{proof}
Lemma \ref{comm-vir} together with the fact that $\mathcal{SF}(1) = V^1 (\mathfrak{psl}(1\vert 1))$ (cf. Subsection \ref{s-fermion-uvod}) implies that $\widetilde{\mathcal{SF}}(1)\cong \mathcal{SF}(1)$. So (1) holds. Then using the theory of local fields from  \cite{LL} we get 
map $ a^{\pm} (z) \mapsto \varphi^{\pm} (z) $ which uniquely  define  on $\overline{F}$ the structure of a  $\mathcal{SF}(1)$--module. Assume that   $\overline{F}$  is reducible    $\mathcal{SF}(1)$--module, with a submodule $U \ne \overline{F}$. Then the relations (\ref{inverse-1})-(\ref{inverse-2}) show that $U$ is also a $\overline{F}$--submodule. But this is not possible since $\overline{F}$ is simple vertex superalgebra. This proves the assertion (2). The proof of (3) is completely analogous.
\end{proof}

Since $\overline F\cong \mathcal{SF}(1)$ as a $\mathcal{SF}(1)$--module,  and since   the Virasoro field $L_{sf}(z)$  acts on $\overline{F}$ as  $L(z)$, the results from  \cite{Abe} and \cite{AM08} directly  imply:
\begin{theorem} \label{sing-vekt-1}For each $n \in {\Z}_{>1}$ and $0 \le j \le n $ we have:
\begin{itemize}
\item[(1)] $w_{n,j}:= Q^j  \varphi^+ (-n) \cdots \varphi^+ (-1) {\bf 1}$
is a singular vector in $\overline F$.
\item[(2)] $ Q^{n}   \varphi^+ (-n) \cdots \varphi^+  (-1) {\bf 1} = \nu  \varphi^-  (-n) \cdots \varphi^- (-1) {\bf 1}$ for certain $\nu \ne 0$.
\end{itemize}
 As a $L^{Vir}(-2, 0)$--module:
\bea \overline F = \bigoplus_{n =0}  ^{\infty} (n+1) L^{Vir}(-2, \tfrac{n ^2 + n}{2}). \label{dec-formula} \eea
\end{theorem}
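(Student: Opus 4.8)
The plan is to prove Theorem \ref{sing-vekt-1} by transport of structure along the $\mathcal{SF}(1)$--module isomorphism of Proposition \ref{isomorphism-fields-1}, reducing every assertion to the corresponding statement about $\mathcal{SF}(1)$ established in \cite{Abe} and \cite{AM08}. The starting observation is that Proposition \ref{isomorphism-fields-1} furnishes an isomorphism of $\mathcal{SF}(1)$--modules $\mathcal{SF}(1)\cong\overline F$ sending the generators $a^{\pm}$ to $\varphi^{\pm}$ and the vacuum to the vacuum, and that under this map the Virasoro field $L_{sf}(z)=\mathcal Y(\omega_{sf},z)$ on $\mathcal{SF}(1)$ corresponds to the field $L(z)=\mathcal Y^{\mathcal{SF}(1)}_{\overline F}(\omega_{sf},z)$ on $\overline F$, i.e. to the Virasoro field attached to $\omega\in\overline F$. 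Hence $\overline F$ and $\mathcal{SF}(1)$ are isomorphic as modules for the Virasoro algebra generated by the $L(n)$ (resp. $L_{sf}(n)$), and the decomposition (\ref{dec-formula}) follows at once from (\ref{dec-sf-uvod}).

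To upgrade this to the statements about singular vectors I would first identify the derivation $Q$ on $\overline F$ with its counterpart on $\mathcal{SF}(1)$. Both algebras are subspaces of $F\cong V_{\Z\alpha}$, and $Q=(D\Psi^+\Psi^+)_0=(e^{2\alpha})_0$ is the restriction of a single global operator on $F$; since $Q$ commutes with all $L(n)$ it preserves each Virasoro isotypic component and is exactly the operator used in \cite{AM08} to organise the multiplicity spaces. The point I need is that $Q$ is an endomorphism of the $\mathcal{SF}(1)$--module $\overline F$, equivalently that it can be written in terms of the modes $\varphi^{\pm}(m)$, so that it corresponds under the isomorphism to the analogous operator on $\mathcal{SF}(1)$. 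Granting this, $Q^j\varphi^+(-n)\cdots\varphi^+(-1){\bf 1}$ is carried to the symplectic--fermion vector $Q^j a^+(-n)\cdots a^+(-1){\bf 1}$, so assertions (1) and (2), including $\nu\neq 0$, become precisely the singular--vector results of \cite{Abe}, \cite{AM08}.

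As an independent check of part (1), I would verify directly that $v_n:=\varphi^+(-n)\cdots\varphi^+(-1){\bf 1}$ is singular. Using $[L(m),\varphi^+(-k)]=k\varphi^+(m-k)$ and $[\varphi^+(a),\varphi^+(b)]_{+}=0$ from Lemma \ref{comm-vir}, together with $\varphi^+(\ell){\bf 1}=0$ for $\ell\ge 0$ and $L(m){\bf 1}=0$ for $m\ge 1$, each commutator term for $m\ge 1$ either contains $\varphi^+(m-k)$ with $m-k\ge 0$ (which annihilates ${\bf 1}$) or reinserts an already present mode $\varphi^+(-(k-m))$, whose square vanishes; hence $L(m)v_n=0$ for all $m\ge 1$, while $L(0)v_n=\tfrac{n^2+n}{2}\,v_n$ because each $\varphi^+(-k)$ raises the $L(0)$--weight by $k$. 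Since $[Q,L(n)]=0$, every $Q^j v_n$ is then singular of the \emph{same} weight $\tfrac{n^2+n}{2}$, which is exactly why the $(n+1)$ vectors $w_{n,j}$ generate copies of one and the same $L^{Vir}(-2,\tfrac{n^2+n}{2})$; the low cases $n=0,1$ are immediate.

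The step I expect to require genuine work is assertion (2): that $Q^n$ applied to the top vector lands in $\C\,\varphi^-(-n)\cdots\varphi^-(-1){\bf 1}$ with $\nu\neq 0$, equivalently that $Q$ acts on the $(n+1)$--dimensional multiplicity space with nilpotency index exactly $n+1$, so that $w_{n,0},\dots,w_{n,n}$ are linearly independent and exhaust the multiplicity. Via transport this is the corresponding nonvanishing statement in \cite{AM08}; carried out directly it amounts to commuting $Q=(e^{2\alpha})_0$ repeatedly through the $\varphi^+$ modes and tracking that the leading coefficient never degenerates, and the cleanest route is to exhibit $Q$ explicitly as a quadratic expression in the symplectic--fermion modes, so that the $\mathfrak{sl}(2)$--type action on the multiplicity spaces makes both the nilpotency degree and $\nu\neq 0$ transparent. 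Finally, the directness of the sum in (\ref{dec-formula}) and the exhaustion of all singular vectors follow by comparing the graded dimension of $\overline F$ with that of $\bigoplus_{n\ge 0}(n+1)L^{Vir}(-2,\tfrac{n^2+n}{2})$, which agree by the Virasoro--module isomorphism already established in the first step.
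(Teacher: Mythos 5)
Your main line --- transport everything along the $\mathcal{SF}(1)$--module isomorphism of Proposition \ref{isomorphism-fields-1} to the known symplectic--fermion results of \cite{Abe}, \cite{AM08} --- is exactly the paper's argument (the paper's proof is essentially the one sentence preceding the theorem), and your direct mode computation verifying that $v_n=\varphi^+(-n)\cdots\varphi^+(-1){\bf 1}$ is singular of weight $\tfrac{n^2+n}{2}$ is correct and is a useful supplement. The decomposition (\ref{dec-formula}) does follow at once from (\ref{dec-sf-uvod}) together with the fact that $L_{sf}(z)$ acts on $\overline F$ as $L(z)$.

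However, the step you ``grant'' is justified by a claim that is actually false: $Q$ cannot be an endomorphism of the $\mathcal{SF}(1)$--module $\overline F$. That module is irreducible, so by Schur's lemma any such endomorphism is a scalar, while $Q$ kills ${\bf 1}$ but does not kill $w_{1,0}$; so $Q$ is not an $\mathcal{SF}(1)$--module map, and neither is the long screening $e^{2\alpha}_0$ on $\mathcal{SF}(1)$ itself. What you actually need is that the \emph{specific} linear isomorphism $\mathcal{SF}(1)\to\overline F$ realizing the module identification intertwines the two copies of $Q=e^{2\alpha}_0$. This is precisely what the paper supplies immediately after the theorem (the $p=2$ case of Theorem \ref{conj-1}): the operator $\Omega=\exp[e^{2\alpha}_1]$ satisfies $\Omega\,a^{\pm}(n)=\varphi^{\pm}(n)\,\Omega$ and $\Omega\,L_{sf}(n)=L(n)\,\Omega$, and it commutes with $Q$ because $e^{2\alpha}_0e^{2\alpha}=0$ forces $[e^{2\alpha}_0,e^{2\alpha}_1]=0$. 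With $\Omega$ in hand, $w_{n,j}=\Omega\bigl(Q^j a^+(-n)\cdots a^+(-1){\bf 1}\bigr)$ and both (1) and the nonvanishing in (2) transport verbatim from \cite{AM08}; without it, your argument for (2) (and for (1) via transport rather than via your direct check) has a genuine hole. Replace the endomorphism claim by the construction of this intertwiner and the proof is complete and matches the paper's.
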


The vertex superalgebra $\overline F$ has the canonical parity automorphism $\sigma$ such that
$$ \Phi_i \mapsto - \Phi_i, i =1,2.$$
Let
$$\overline F^+ = \{ v \in \overline F \ \vert \ \sigma(v) = v\}. $$

\begin{remark}Note that $\overline F$ is not isomorphic to the symplectic fermion vertex superalgebra $\mathcal{SF}(1)$, although these two vertex superalgebras are isomorphic as $L^{Vir}(-2,0)$--modules. \end{remark}

In Section \ref{generalizacija} we shall construct an explicit $L^{Vir}(-2,0)$--isomorphism between
symplectic fermion vertex superalgebra   $ \mathcal{SF}(1)$ and $\overline F. $
Note that $\mathcal{SF}(1) ^+$  is isomorphic to the triplet vertex algebra $\mathcal W^{(p)}$  for $p=2$.

The following result are special case of Theorem  \ref{conj-1} 
in the case $p=2$. 
\begin{prop} The mappings 
\bea
\Omega= \exp[e^{2\alpha} _1] \vert_{ \mathcal{SF}(1)} :  && \mathcal{SF}(1) \rightarrow \overline F, \nonumber \\
\Omega= \exp[e^{2\alpha} _1] \vert_{ \mathcal W^{(2)}  }:  && \mathcal W^{(2)} \rightarrow \overline F^+, \nonumber
 \eea
are  $L^{Vir}(-2,0)$--isomorphisms. Moreover, as operators  of $\mbox{End}(F)$ we have
\bea
\Omega   L_{sf}(n) =  L(n) \Omega,  \quad
\Omega a^{\pm} (n) =      \varphi^{\pm}(n)  \Omega. \nonumber 
  \eea
\end{prop}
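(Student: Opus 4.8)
The plan is to realize $\Omega=\exp[e^{2\alpha}_1]$ as the exponential of the operator $X:=e^{2\alpha}_1\in\mathrm{End}(F)$ and to check directly that conjugation by $\Omega$ carries the symplectic-fermion modes to the $\varphi$-modes. First I would record the needed products of $e^{2\alpha}=D\Psi^+\Psi^+$ in the lattice realization $F\cong V_{\Z\alpha}$. Since $\omega_{sf}=\tfrac12\alpha(-1)^2+\tfrac12\alpha(-2)$, the vector $e^{2\alpha}$ is a Virasoro primary of conformal weight one for $\omega_{sf}$, so $[L_{sf}(m),e^{2\alpha}_n]=-n\,e^{2\alpha}_{m+n}$; in particular $[X,L_{sf}(m)]=e^{2\alpha}_{m+1}$, which is exactly $L(m)-L_{sf}(m)$ by the relation $L(m)=L_{sf}(m)+e^{2\alpha}_{m+1}$ recorded in the text. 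The remaining products follow from the lattice OPEs $e^{2\alpha}(z)e^{c\alpha}(w)\sim(z-w)^{2c}$: one has $e^{2\alpha}_i\,e^{\alpha}=0$ and $e^{2\alpha}_i\,e^{2\alpha}=0$ for all $i\ge 0$ (regular, resp.\ $(z-w)^4$, OPE), while $e^{2\alpha}(z)e^{-\alpha}(w)$ has a double pole with $e^{2\alpha}_1\,e^{-\alpha}=e^{\alpha}$ and $e^{2\alpha}_0\,e^{-\alpha}=2\,De^{\alpha}$. Because $X$ strictly raises $\alpha$-charge by $2$ and lowers $L_{sf}(0)$-weight by $1$, it is locally nilpotent on each graded piece of $F$, so $\Omega=\exp(X)$ is a well-defined invertible even operator on $F$ with inverse $\exp(-X)$.

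Next I would prove the three intertwining identities on $\mathrm{End}(F)$ via $\Omega\,A\,\Omega^{-1}=\sum_{k\ge0}\tfrac1{k!}(\mathrm{ad}\,X)^k A$, using the Borcherds commutator formula and the products above. For the Virasoro modes, $(\mathrm{ad}\,X)L_{sf}(m)=e^{2\alpha}_{m+1}$ and $(\mathrm{ad}\,X)^2 L_{sf}(m)=[e^{2\alpha}_1,e^{2\alpha}_{m+1}]=0$ (since $e^{2\alpha}_i\,e^{2\alpha}=0$ for $i\ge0$), whence $\Omega L_{sf}(m)=L(m)\Omega$. For $a^-$, note $\varphi^-(n)=a^-(n)$ by definition and $a^-(n)=-n\,e^{\alpha}_{n-1}$ is built from modes of $e^{\alpha}$; since $e^{2\alpha}_i\,e^{\alpha}=0$ for $i\ge0$ we get $[X,e^{\alpha}_k]=0$, hence $[X,a^-(n)]=0$ and $\Omega a^-(n)=\varphi^-(n)\Omega$. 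For $a^+(n)=e^{-\alpha}_{n}$, the Borcherds formula together with the double pole gives the two contributions $(e^{2\alpha}_0\,e^{-\alpha})_{n+1}=-2(n+1)e^{\alpha}_n$ and $(e^{2\alpha}_1\,e^{-\alpha})_{n}=e^{\alpha}_n$, so $[X,a^+(n)]=-(2n+1)e^{\alpha}_n=\varphi^+(n)-a^+(n)$; the double bracket $[X,-(2n+1)e^{\alpha}_n]=0$ again because $X$ commutes with $e^{\alpha}_n$, and therefore $\Omega a^+(n)=\varphi^+(n)\Omega$. This establishes the displayed ``Moreover'' relations.

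With the intertwiners in hand the module statements follow quickly. Since $X{\bf 1}=e^{2\alpha}_1{\bf 1}=0$ we have $\Omega{\bf 1}={\bf 1}$, and for any monomial in $\mathcal{SF}(1)$,
\[
\Omega\,a^{\epsilon_1}(-n_1)\cdots a^{\epsilon_k}(-n_k){\bf 1}=\varphi^{\epsilon_1}(-n_1)\cdots\varphi^{\epsilon_k}(-n_k){\bf 1}\in\overline F,
\]
so $\Omega$ maps $\mathcal{SF}(1)$ into $\overline F$; it is injective because $\exp(X)$ is invertible on $F$. Surjectivity I would deduce from Proposition \ref{isomorphism-fields-1}: the image $\Omega(\mathcal{SF}(1))$ is stable under all $\varphi^{\pm}$ and contains ${\bf 1}$, hence is a nonzero $\mathcal{SF}(1)$-submodule of the irreducible $\mathcal{SF}(1)$-module $\overline F$ and so equals $\overline F$. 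Finally, $\Omega L_{sf}(m)=L(m)\Omega$ says precisely that $\Omega$ intertwines the $\omega_{sf}$-action on $\mathcal{SF}(1)$ with the $\omega$-action on $\overline F$, so $\Omega$ is an isomorphism of $L^{Vir}(-2,0)$-modules. Since $X$ is even, $\Omega$ preserves the parity decomposition, so restricting to the even part and using $\mathcal{SF}(1)^+\cong\mathcal W^{(2)}$ and $\overline F^+$ yields the second isomorphism $\mathcal W^{(2)}\to\overline F^+$.

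The main obstacle is the bookkeeping in the bracket computations: one must use the correct conformal weight of $e^{2\alpha}$ under $\omega_{sf}$ (not the standard free-fermion weight), keep the two mode conventions for $\Phi_i$ and for the lattice fields $e^{c\alpha}$ straight, and fix the cocycle normalization so that $e^{2\alpha}_1\,e^{-\alpha}=e^{\alpha}$ and $e^{2\alpha}_0\,e^{-\alpha}=2De^{\alpha}$ emerge with exactly the coefficients needed to reproduce $\varphi^+(n)=-(2n+1)\Psi^+(n+\tfrac12)+\Psi^-(n+\tfrac12)$. Once these normalizations are pinned down, the termination of the $\mathrm{ad}\,X$-series (all double brackets vanish) is what makes the conjugation identities exact rather than merely formal.
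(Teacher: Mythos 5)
Your proof is correct, and it takes a genuinely different route from the paper's. The paper obtains this proposition as the $p=2$ case of Theorem \ref{conj-1}, whose proof runs through the screening operators: one computes $[\widetilde Q, e^{\gamma}_1]$ to get the intertwining relation $\Omega \widetilde Q = \widetilde Q_{new}\Omega$, so that $\Omega$ carries $\mathcal{SF}(1)=\mbox{Ker}_F\,\widetilde Q$ bijectively onto $\overline F=\mbox{Ker}_F\,\widetilde Q_{new}$ in one stroke, with the Virasoro relation $L(n)\Omega=\Omega L_{sf}(n)$ supplying the module-map property; surjectivity is automatic and no irreducibility statement is needed. You instead conjugate the generating modes directly: you verify that $\mathrm{ad}(e^{2\alpha}_1)$ terminates after one step on $L_{sf}(m)$, $a^{\pm}(n)$ (using $e^{2\alpha}_j e^{\alpha}=e^{2\alpha}_j e^{2\alpha}=0$ for $j\ge 0$ and the double pole against $e^{-\alpha}$), which yields the ``Moreover'' identities, and then you get surjectivity onto $\overline F$ from the irreducibility of $\overline F$ as an $\mathcal{SF}(1)$--module (Proposition \ref{isomorphism-fields-1}). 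Your mode computations check out against the conventions $a^+(n)=e^{-\alpha}_n$, $a^-(n)=-n e^{\alpha}_{n-1}$, $\varphi^+(n)=-(2n+1)\Psi^+(n+\tfrac12)+\Psi^-(n+\tfrac12)$, and the observation that $X$ lowers $L_{sf}(0)$--weight by one justifies local nilpotence. What each approach buys: the paper's screening argument generalizes immediately to all $p\ge 2$ and identifies the image intrinsically as a kernel, while your argument actually proves the displayed operator identities $\Omega a^{\pm}(n)=\varphi^{\pm}(n)\Omega$, which the paper asserts without detailed verification, at the cost of leaning on Proposition \ref{isomorphism-fields-1} for surjectivity and on a computation special to $p=2$.
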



\section{Realization of $L_{-\tfrac{3}{2}}(\mathfrak{osp}(1\vert 2))$ and its consequences}
 
 In this section we first recall a realisation from  \cite{A-2019}  which gives a homomorphism  $\Phi : V^{k}(\mathfrak{osp}(1\vert 2)) \rightarrow  \overline F \otimes \Pi(0)^{1/2}$ at $k=-3/2$.  We prove that the image of this homomorphism is the simple vertex superalgebra $L_k(\mathfrak{osp}(1\vert 2))$.   We also construct the screening operator $S^{\mathfrak{osp}}$ for this realisation. By using the decomposition of $\overline F$ as a $L^{Vir}(-2,0)$--module from Section  \ref{new-simpl-ferm} we obtain the decomposition of  $L_k(\mathfrak{osp}(1\vert 2))$ as a $L_k(\mathfrak{sl}(2))$--module.

\subsection{ Affine vertex superalgebra $V^k(\mathfrak{osp}(1 \vert 2)) $}  

Recall that $\g = \mathfrak{osp}(1 \vert 2)$ is the simple complex Lie superalgebra   with basis $\{e,f, h, x, y\}$ such that the even part
$\g_{0} = \mbox{span}_{\C} \{ e, f, h \} \cong \mathfrak{sl}(2) $ and the  odd part $\g_{1}  =\mbox{span} _{\C} \{x, y \}$.
The anti-commutation relations are given by
\bea
 && [e,f] = h, \ [h, e] = 2 e, \ [h, f] = - 2 f \nonumber \\
 && [h, x] = x, \ [ e, x] = 0,  \ [f, x] = -y \nonumber \\
 && [h, y] = -y, \ [e, y] =-x \ [f, y] = 0 \nonumber \\
 && \{ x, x\} = 2 e, \ \{x, y\} = h, \ \{ y, y \} = -2 f. \nonumber
\eea
Choose the non-degenerate super-symmetric bilinear form  $ ( \cdot , \cdot )$ on $\g$ such that non-trivial products are given by
$$ (e, f) = (f,e) = 1, \ (h,h) = 2, \ (x, y) =  -(y, x) = 2. $$
Let ${\widetilde  \g} = {\g} \otimes {\C}[t, t^{-1}] + {\C} K + {\C}\dot{d}  $ be the associated affine Kac-Moody  Lie superalgebra,  where $\dot{d}$ is  the degree operator and $K$ is the  central element. Let $\widehat{\g} = [{\widetilde  \g}, {\widetilde  \g}] = {\g} \otimes {\C}[t, t^{-1}] + {\C} K$.   Let $V^k (\g)$  be  the associated universal affine vertex superalgebra, and  $L_k(\g)$ be its unique simple  $\dot{d}$--graded quotient.
As usual, we identify  $x \in {\g}$ with $x(-1) {\bf 1}$.

Define also the parafermion vertex algebra $N_{k}(\g) = \{ v \in L_k(\g) \ \vert \ h(n). v = 0 \ \forall n \in {\Z}_{\ge 0}\}$.
 
 The representation theory of the affine vertex superalgebra $L_k(\g)$  at admissible levels was recently studied in \cite{CFK, CKTR, RSW, KR1, W-2020}.
In the positive integer level case, $N_k(\g)$ is generated by $N_{k}(\g_0)$ and a  primary vector of weight three (cf. \cite{JW}).
  In the present paper  shall study the structure and  representation theory of $L_k(\g)$ and $N_k(\g)$ at the critical level $k=-3/2$.

\subsection{Realization}
We shall here recall some consequences of the  realization of the vertex superalgebra  $V^{-\tfrac{3}{2}}(\g )$ from \cite{A-2019}.

Consider the  lattice vertex algebra $\Pi(0)^{1/2} = M(1) \otimes {\C}[{\Z} \tfrac{c}{2}]$ as in \cite{A-2019} (see also \cite{BDT}, \cite{LW}).  
Let $g = \mbox{exp}[\pi i d(0)]$ be the automorphism of order two of $\Pi(0) ^{1/2}$.
 Set $\mu = \frac{d}{2} + \frac{k}{4} c, \nu = \frac{d}{2} - \frac{k}{4} c$.
For $\lambda \in {\C}$ and $r \in {\Z}$, we define
 $$ \Pi^{1/2} _{(r)} (\lambda):= \Pi^{1/2}(0). e^{r \mu + \lambda c}. $$
 Then   $\Pi^{1/2} _{(r)} (\lambda)$ is an untwisted ($g$--twisted)  $\Pi(0)^{1/2}$--module if $r$ is even (if $r$ is odd) (see \cite[Section 4.1]{A-2019}).

\begin{theorem} \cite{A-2019}
Let $k =-3/2$. There exists  a non-trivial homomorphism $$\Phi : V^{k}(\g) \rightarrow  \overline F \otimes \Pi(0)^{1/2}$$ such that 
 \bea\
e & \mapsto & e^{c }, \nonumber  \\
h & \mapsto & 2 \mu(-1), \label{def-h-3} \nonumber \\
f & \mapsto & \left[    (k+2)  \omega   -\nu(-1)^{2} -  (k+1) \nu(-2) \right] e^{-c} \nonumber \\
x & \mapsto &   \sqrt{2}   \Phi_1  (-\frac{1}{2})    e^{c/2 } \nonumber \\
y & \mapsto & \sqrt{2}   \left[  - \frac{i }{2} \Phi_2  (-\frac{3}{2})  + \Phi_1  (-\frac{1}{2})  \nu (-1) + \frac{2k+1}{2}   \Phi_1  (-\frac{3}{2})  \right] e^{-c/2} . \quad    \nonumber 
\eea 
 where $\omega$ is the Virasoro vector of $\overline F$ of central charge $c_{1,2}=-2$.
\end{theorem}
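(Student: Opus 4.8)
The plan is to exploit the universal property of $V^{k}(\g)$: because $V^{k}(\g)$ is freely generated by the currents attached to $e,f,h,x,y$, any assignment of these generators to elements of $\overline F\otimes\Pi(0)^{1/2}$ extends to a vertex superalgebra homomorphism precisely when the images satisfy the defining operator product expansions of the affinization $\widehat\g$ at level $k$. Writing $E,H,F,X,Y$ for the fields obtained by applying $Y(\cdot,z)$ to the proposed images, I must check the $\lambda$-brackets prescribed by the structure constants and the invariant form: the even relations $[E_\lambda E]=[F_\lambda F]=0$, $[H_\lambda H]=2k\lambda$, $[H_\lambda E]=2E$, $[H_\lambda F]=-2F$, $[E_\lambda F]=H+k\lambda$; the mixed relations $[H_\lambda X]=X$, $[H_\lambda Y]=-Y$, $[E_\lambda X]=0$, $[E_\lambda Y]=-X$, $[F_\lambda X]=-Y$, $[F_\lambda Y]=0$; and the odd relations $[X_\lambda X]=2E$, $[X_\lambda Y]=H+2k\lambda$, $[Y_\lambda Y]=-2F$. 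Once these hold the homomorphism exists and is \emph{non-trivial}, since the images of the generators are visibly nonzero; the separate claim that $\mathrm{Im}(\Phi)$ is simple, hence equals $L_k(\g)$, is not part of this statement and would be handled afterwards.

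To carry out the check I would first assemble the contraction data. Inside $\overline F\subset F$ the only nonzero fermionic contractions are $\Phi_i(z)\Phi_j(w)\sim\delta_{i,j}/(z-w)$, and the Virasoro vector $\omega$ is purely fermionic, so it never contracts with the lattice factors. In $\Pi(0)^{1/2}$, with the normalization of \cite{A-2019} ($\langle c,c\rangle=\langle d,d\rangle=0$, $\langle c,d\rangle=2$), the relevant Heisenberg pairings are $\langle\mu,\mu\rangle=k/2$, $\langle\nu,\nu\rangle=-k/2$, $\langle\mu,\nu\rangle=0$ and $\langle c,\mu\rangle=\langle c,\nu\rangle=1$, while the lattice operators $e^{c}(z)e^{-c}(w)$, $e^{c/2}(z)e^{c/2}(w)$ and $e^{c/2}(z)e^{-c/2}(w)$ all have vanishing leading exponent because $\langle c,c\rangle=0$. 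Consequently every pole in the OPEs is produced not by a lattice cocycle exponent but by a Heisenberg contraction of the $c$ inside $e^{\pm c},e^{\pm c/2}$ against the $\mu$- and $\nu$-modes, or by a fermionic contraction of the $\Phi_i$'s. Since $X$ and $Y$ are each a fermion times a lattice vertex operator, every OPE factorizes and the whole computation reduces to Wick's theorem applied summand by summand.

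I would then verify the relations in increasing order of difficulty. Those built only from $H=2\mu(-1)$ are immediate from the pairings above, and the weight relations $[H_\lambda X]=X$, $[H_\lambda Y]=-Y$ hold because $e^{\pm c/2}$ is a $\mu(0)$-eigenvector. The short bosonic checks $[E_\lambda E]=0$ and $[X_\lambda X]=2E$ use the single contraction $\Phi_1\Phi_1\sim1/(z-w)$ together with $:e^{c/2}e^{c/2}:=e^{c}$. The substantial computations are those that must regenerate the lowering generators: $[E_\lambda F]=H+k\lambda$, where the contraction of $e^{c}$ with the $\nu(-1)^2$ and $\nu(-2)$ terms of $F$ must collapse to $2\mu(-1)+k\lambda$, and above all $[Y_\lambda Y]=-2F$, which has to reproduce the entire expression $\bigl[(k+2)\omega-\nu(-1)^2-(k+1)\nu(-2)\bigr]e^{-c}$, Virasoro vector included. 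I expect this last identity to be the crux: the three summands of $Y$ contract against each other in several ways, the double fermionic contractions of $\Phi_2(-\tfrac32)$ and $\Phi_1(-\tfrac32)$ against $\Phi_1(-\tfrac12)$ must assemble exactly into $\omega$, and the coefficients $k+2$, $k+1$, $2k+1$ fit together only at the critical value $k=-\tfrac32$. Concretely I would normal-order $Y(z)Y(w)$ by Wick's theorem, expand in $(z-w)$, and match the first- and second-order poles against $-2F$ and the (vanishing) central term, confirming the $k$-dependence. The remaining cross relations $[F_\lambda X]=-Y$, $[E_\lambda Y]=-X$ and $[F_\lambda Y]=0$ are checked the same way and double as consistency tests on the normalizations $\sqrt2$ and $\tfrac{2k+1}{2}$.
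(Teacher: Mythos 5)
Your strategy is the right one, and in fact it is the only reasonable one: the paper itself gives no proof of this theorem but imports it from \cite{A-2019}, where the realization is established by exactly the kind of generator-by-generator OPE verification you describe. Your reduction via the universal property of $V^{k}(\g)$ is correct, your list of $\lambda$-brackets matches the structure constants and the bilinear form fixed in the paper (in particular $[X_\lambda Y]=H+2k\lambda$ from $(x,y)=2$ and $[E_\lambda F]=H+k\lambda$ from $(e,f)=1$), and your contraction table ($\langle\mu,\mu\rangle=k/2$, $\langle\nu,\nu\rangle=-k/2$, $\langle\mu,\nu\rangle=0$, $\langle c,\mu\rangle=\langle c,\nu\rangle=1$, $\langle c,c\rangle=0$ so that all lattice exponents vanish and every pole comes from Heisenberg or fermionic contractions) is exactly the data needed. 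You also correctly isolate the crux: since $\omega$ is purely fermionic and $e^{c}$ carries no fermionic content, the coefficient $(k+2)$ of $\omega$ in $\Phi(f)$ is invisible to $[E_\lambda F]$ and is pinned down only by $[Y_\lambda Y]=-2F$ (and the consistency checks $[F_\lambda X]=-Y$, $[E_\lambda Y]=-X$). The only shortfall is that the proposal remains a plan: the decisive Wick computation for $[Y_\lambda Y]$, where the cross terms of $\Phi_2(-\tfrac32)$, $\Phi_1(-\tfrac12)\nu(-1)$ and $\Phi_1(-\tfrac32)$ must reassemble into $(k+2)\omega-\nu(-1)^2-(k+1)\nu(-2)$ at $k=-\tfrac32$, is described but not carried out, so as written this is a correct blueprint rather than a complete proof.
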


By using the boson-fermion correspondence (cf. \cite{K2}), one can derive expressions for odd generators $x,y$ in $ V_{\Z \alpha} \otimes \Pi(0)^{1/2}$:

\begin{lemma} \label{expr-xy}
We have:
\bea  x 
&=& e^{\alpha + c/2} + e^{-\alpha + c/2}. \label{expres-x} \\
  y &=&   
   \left[       \nu (-1) + \tfrac{1}{2}  \alpha(-1) \right]  e^{-\alpha  - c/2}
 +   \left[   \nu (-1) -\tfrac{3}{2}  \alpha(-1) \right]        e^{\alpha  - c/2}  \label{expres-y}
 \eea
  \end{lemma}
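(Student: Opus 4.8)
The goal is to derive the explicit bosonic expressions (\ref{expres-x}) and (\ref{expres-y}) for the odd generators $x$ and $y$ from the fermionic formulas in the preceding theorem by applying the boson-fermion correspondence inside $V_{\Z\alpha}\otimes\Pi(0)^{1/2}$. The plan is to substitute the identifications $\Psi^\pm = e^{\pm\alpha}$ together with $\Phi_1 = \frac{1}{\sqrt 2}(e^\alpha + e^{-\alpha})$ and $\Phi_2 = \frac{1}{\sqrt{-2}}(e^\alpha - e^{-\alpha})$ (recorded earlier in the excerpt) into the right-hand sides of the maps for $x$ and $y$, and then repackage the resulting fermionic modes as vertex operators associated to lattice vectors.

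First I would treat $x$, which is the easier case. Since $x \mapsto \sqrt 2\,\Phi_1(-\tfrac12) e^{c/2}$ and $\Phi_1(-\tfrac12){\bf 1}$ is precisely the zero-energy fermion, the boson-fermion correspondence gives $\Phi_1(-\tfrac12){\bf 1} = \frac{1}{\sqrt2}(e^\alpha + e^{-\alpha})$, so multiplying by $\sqrt2$ and tensoring with $e^{c/2}$ yields $x = e^{\alpha+c/2} + e^{-\alpha+c/2}$, which is exactly (\ref{expres-x}). The main subtlety here is to verify that the tensor factor $e^{c/2}$ from $\Pi(0)^{1/2}$ commutes appropriately with the $V_{\Z\alpha}$ factor, but since the two lattices are independent this is immediate and the normal-ordering is trivial because $\Phi_1(-\tfrac12)$ acts on the vacuum.

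The substantive computation is the expression for $y$. Here the formula mixes $\Phi_2(-\tfrac32)$, $\Phi_1(-\tfrac12)\nu(-1)$, and $\Phi_1(-\tfrac32)$ with coefficients depending on $k$, and one must rewrite each fermionic state in bosonic terms. The key step is to express $\Phi_1(-\tfrac32){\bf1}$ and $\Phi_2(-\tfrac32){\bf1}$ via $e^{\pm\alpha}$: under the correspondence these descendant fermion states become $\alpha(-1) e^{\pm\alpha}$-type vectors, since $D\Psi^\pm = \pm\alpha(-1)e^{\pm\alpha}$ (the vertex operator for a degree-raised lattice vector). Concretely one uses that $\Phi_1(-\tfrac32){\bf1} = \frac{1}{\sqrt2}\alpha(-1)(e^\alpha - e^{-\alpha})$ and $\Phi_2(-\tfrac32){\bf1} = \frac{1}{\sqrt{-2}}\alpha(-1)(e^\alpha + e^{-\alpha})$, obtained by applying $D = L_{sf}(-1)$ to the zero-modes $\Phi_i(-\tfrac12){\bf1}$ and reading off the grading. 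I would then insert $k=-3/2$ (so $2k+1 = -2$ and the coefficient $\frac{2k+1}{2} = -1$), combine the $e^\alpha$ and $e^{-\alpha}$ terms separately, and collect the coefficients of $\alpha(-1)$ and $\nu(-1)$.

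The main obstacle I expect is bookkeeping of the various scalar factors — the $\sqrt2$, the $\frac{1}{\sqrt{\pm2}}$, the imaginary unit $i$ appearing in $\Phi_2$, and the $k$-dependent coefficients — together with correctly identifying how the $\nu(-1)$ factor from $\Pi(0)^{1/2}$ rides along with the lattice exponential $e^{-c/2}$ to produce the grouped brackets $[\nu(-1) + \tfrac12\alpha(-1)]$ and $[\nu(-1) - \tfrac32\alpha(-1)]$ in (\ref{expres-y}). To manage this I would organize the calculation by separately tracking the coefficient of $e^{\alpha - c/2}$ and of $e^{-\alpha - c/2}$, verifying in each case that the $\Phi_2(-\tfrac32)$ contribution (carrying the $-\tfrac{i}{2}$ and the $\frac{1}{\sqrt{-2}}$) and the $\frac{2k+1}{2}\Phi_1(-\tfrac32) = -\Phi_1(-\tfrac32)$ contribution combine to give exactly the $+\tfrac12\alpha(-1)$ and $-\tfrac32\alpha(-1)$ coefficients respectively, while the $\Phi_1(-\tfrac12)\nu(-1)$ term supplies the common $\nu(-1)$. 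This is a direct but delicate verification; once the coefficients match on both exponential sectors, formula (\ref{expres-y}) follows and the lemma is proved.
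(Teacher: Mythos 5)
Your proposal is correct and follows exactly the route the paper intends: the paper offers no written proof beyond invoking the boson--fermion correspondence, and your substitution of $\Phi_1=\tfrac{1}{\sqrt2}(e^{\alpha}+e^{-\alpha})$, $\Phi_2=\tfrac{1}{\sqrt{-2}}(e^{\alpha}-e^{-\alpha})$, $\Phi_i(-\tfrac32){\bf 1}=D\Phi_i$ into the realization formulas, with $\tfrac{2k+1}{2}=-1$ at $k=-\tfrac32$, reproduces (\ref{expres-x}) and (\ref{expres-y}); your intermediate identities and the final coefficients $\nu(-1)-\tfrac32\alpha(-1)$ on $e^{\alpha-c/2}$ and $\nu(-1)+\tfrac12\alpha(-1)$ on $e^{-\alpha-c/2}$ all check out.
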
  

 Let $\omega_{sug} ^{\g_0}$ be the Sugawara Virasoro vector in $V^k(\g_{0})$. Then 
$$ \omega_{sug} = \Phi(\omega_{sug} ^{\g_0} ) = \omega +\tfrac{1}{2} c(-1) d(-1) -\tfrac{1}{2} d(-2) + \frac{k}{4}  c(-2).$$
Set $L_{sug}  (n) =  (\omega_{sug})_{n+1}$. We have:
\bea
L_{sug} (-1) - D = Q. \label{vazna-rel-der}
\eea

\subsection{Screening operators} \label{screening-real}
Note that $\Pi_{\nu} := \Pi(0) ^{1/2} e^{\nu}$ is a twisted $\Pi(0) ^{1/2}$--module. Moreover $   F^{tw} \otimes  \Pi_{\nu}$ is a twisted $\overline F \otimes \Pi(0) ^{1/2}$--module. One shows   that $   F^{tw} \otimes  \Pi_{\nu}$ is an untwisted $V^{k}(\g)$--module.  Let $s = e^{\tfrac{\alpha}{2} + \nu}=v_{tw}  \otimes e^{\nu}$, and $S^{\mathfrak{osp} } = \int Y(s,z) dz = s_0$. 

\begin{lemma}  $S^{\mathfrak{osp}}$ is the  screening operator and commutes with the action of $\widehat{\g}$.
\end{lemma}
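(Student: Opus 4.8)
The plan is to establish the commutation $[S^{\mathfrak{osp}},\Phi(a)(w)]=0$ for the generating currents $a$ of $\g$; since the corresponding modes generate $\widehat{\g}$, this yields the assertion. Moreover, because $\mathfrak{osp}(1\vert 2)$ is generated as a Lie superalgebra by the two odd root vectors $x$ and $y$ (one has $\{x,x\}=2e$, $\{y,y\}=-2f$, $\{x,y\}=h$, so that $e,f,h$ already lie in the subalgebra they generate), it suffices to treat $a\in\{x,y\}$.

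Writing $S^{\mathfrak{osp}}=\oint Y(s,z)\,dz$ for the residue of the intertwining operator attached to $s=e^{\alpha/2+\nu}=v_{tw}\otimes e^{\nu}$, the commutator $[S^{\mathfrak{osp}},\Phi(a)(w)]$ equals the residue at $z=w$ of the operator product $\Phi(a)(w)\,Y(s,z)$. I would compute this product on $V_{\Z\alpha}\otimes\Pi(0)^{1/2}$, using Lemma \ref{expr-xy} to replace $\Phi(x)=x$ and $\Phi(y)=y$ by explicit momentum-dressed lattice vertex operators supported on $e^{\pm\alpha\pm c/2}$, and applying the standard contraction formula $e^{\gamma}(z)e^{\delta}(w)=\varepsilon(\gamma,\delta)\,(z-w)^{\langle\gamma,\delta\rangle}:\!e^{\gamma}(z)e^{\delta}(w)\!:$ together with the Heisenberg contractions produced by the factors $\nu(-1)$ and $\alpha(-1)$. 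The relevant data are $\langle\alpha,\alpha\rangle=1$ and the bilinear pairings of $\Pi(0)^{1/2}$ evaluated at $k=-3/2$.

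The residue then has to be shown to vanish. For the generator whose exponential pairings with $\tfrac{\alpha}{2}+\nu$ are non-negative integers, the operator product is regular and the commutator vanishes on the spot. For the remaining generator a pole is present; here the single-pole term and the double-pole term created by the momentum contraction must combine so that the singular part of the operator product is a total $z$-derivative $\partial_z(\cdots)$ of a single-valued meromorphic expression, whose contour integral is then zero. This is precisely where the specific coefficients of $x$ and $y$ in the realization and the critical value $k=-3/2$ enter; the same computation simultaneously confirms that $s_0$ maps into the twisted module $F^{tw}\otimes\Pi_{\nu}$ and so defines a screening, i.e.\ an intertwiner between the two $\widehat{\g}$--module structures.

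I expect the main obstacle to be the bookkeeping forced by the twist: the shift $\tfrac{\alpha}{2}$ places the fermionic contraction in the $\Theta$--twisted sector (so half-integer powers of $z-w$ and the cocycle signs $\varepsilon$ must be tracked carefully), and the momentum-field contractions have to be assembled correctly. The conceptual content is light—locality plus a residue computation—but verifying that the non-regular operator product collapses to a total derivative exactly at the critical level is the step requiring care.
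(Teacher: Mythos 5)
Your proposal is correct and its crux coincides with the paper's: both reduce the statement, via the commutator formula $[s_0,a(n)]=(s_0a)_n$, to checking that $s_0x=s_0y=0$ for the explicit momentum-dressed lattice vertex operators of Lemma \ref{expr-xy}, which is exactly the computation in the paper (your "regular OPE" case is the two terms of $x$, with pairings $1$ and $0$ against $\tfrac{\alpha}{2}+\nu$, and your "pole collapsing to a total derivative" case is the cancellation the paper exhibits for $y$, where the critical value $k=-3/2$ indeed enters). The one genuine divergence is the even part: the paper handles $e,f,h$ by observing that $S^{\mathfrak{osp}}$ coincides with the $\widehat{\g_0}$--screening operator of \cite{A-2019}, \cite{ACGY} and importing its commutation with $\widehat{\g_0}$, whereas you deduce it from the odd generators via $\{x,x\}=2e$, $\{y,y\}=-2f$, $\{x,y\}=h$ (note $(x,x)=(y,y)=0$ by supersymmetry of the form, so no central terms interfere). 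Your route is more self-contained and avoids the external citation at the cost of relying on the super-Jacobi identity; the paper's is shorter. A minor remark: your worry about half-integer powers of $z-w$ is unnecessary here, since the fermionic pairing $\pm\tfrac12$ always combines with the $\pm\tfrac12$ coming from $\langle\nu,c\rangle$ to give integral exponents in all four relevant products.
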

\begin{proof}
Note that $v_{tw}$ is a highest weight vector of conformal weight $-1/8$ which for $c_{1,2}=-2$ correspond to the singular vector denoted by  $v_{2,1}$ in \cite{A-2019}. Therefore $s= v_{2,1} \otimes e^{\nu}$ and $S^{\mathfrak{osp}}$ coincides with   the $\widehat{\g_0}$ screening operator from \cite{A-2019} and therefore it commutes with  the action of $\widehat{\g_0}$. It remains to prove that $S^{osp}$ commutes with operators  $x(n)$ and $y(n)$ for $n \in {\Z}$.

 By a direct calculation  and using expressions for $x,y$ from Lemma \ref{expr-xy} we get
 \bea  S^{\mathfrak{osp}} x =  S^{\mathfrak{osp}} y =  0. \label{vanishing-screening1} \eea
 Indeed, using standard calculation in lattice vertex algebras we get
 \bea S^{\mathfrak{osp}} x &=& e^{\alpha/2 + \nu} _0  (e^{\alpha + c/2} + e^{-\alpha + c/2}) \nonumber \\ & =& e^{\alpha/2 + \nu} _0  e^{-\alpha + c/2} = 0. \nonumber \\
 S^{\mathfrak{osp}} y &=& e^{\alpha/2 + \nu} _0  \left[       \nu (-1) + \tfrac{1}{2}  \alpha(-1) \right]  e^{-\alpha  - c/2} \nonumber \\
 && + e^{\alpha/2 + \nu} _0 \left[   \nu (-1) +k  \alpha(-1) \right]        e^{\alpha  - c/2} \nonumber  \\
 &=&  \left[       \nu (-1) + \tfrac{1}{2}  \alpha(-1) \right] e^{-\alpha /2 - c/2 + \nu}  \nonumber \\ 
 && -  e^{\alpha/2 + \nu} _{-1}  e^{-\alpha  - c/2}  = 0. \nonumber
 \eea

 Using the commutator formula,  we get
 $$ [S^{\mathfrak{osp}} , x(n)] = (S ^{\mathfrak{osp}} x)_n =0, [S^{\mathfrak{osp}} , y(n)] = (S^{\mathfrak{osp}} y)_n =0. $$
 The proof follows.
\end{proof}

Since the operator $S^{\mathfrak{osp}}$ coincides with the screening operator for $\widehat{\g_0}$ from  \cite{A-2019} and \cite{ACGY}, we    have the following consequence of \cite[Lemma 1]{ACGY}:
\begin{lemma} \label{lem-int} Let $w_n$ be the highest weight vector of $L^{Vir} (-2, \tfrac{n(n+1)}{2})$. Then
\bea L_{-\tfrac{3}{2}} ^{\g_0} (n \omega_1) &=& \mbox{Ker} _{ L^{Vir} (-2, \tfrac{n(n+1)}{2}) \otimes \Pi(0)^{1/2}} S^{\mathfrak{osp}} = L_{-3/2} (\g_0).  (w_n \otimes e^{\frac{n}{2} c}). \nonumber 
 \eea
\end{lemma}

\begin{theorem} \label{screening-realization} We have:
\item[(1)] $L_{-\tfrac{3}{2}} (\g) = \mbox{Ker}_{\overline F \otimes \Pi(0)^{1/2}} S^{\mathfrak{osp}}.  $
\item[(2)] As a $L_{-\tfrac{3}{2}} (\g_0)$--module:
$$L_{-\tfrac{3}{2}} (\g)  = \bigoplus_{n =0}  ^{\infty} (n+1) L_{-\tfrac{3}{2}} ^{\g_0} (n \omega_1), $$
where $\omega_1$ is the fundamental dominant weight of  $\g_0$.
\end{theorem}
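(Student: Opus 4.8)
The plan is to leverage the screening operator $S^{\mathfrak{osp}}$ together with the decomposition of $\overline F$ as an $L^{Vir}(-2,0)$--module from Theorem \ref{sing-vekt-1}, and then to invoke Lemma \ref{lem-int} slice-by-slice. First I would establish part (1). Since $\Phi$ is a homomorphism and the image lands in $\overline F \otimes \Pi(0)^{1/2}$, and since the previous Lemma shows that $S^{\mathfrak{osp}}$ commutes with the action of $\widehat{\g}$, the image $\mathrm{Im}(\Phi)$ consists of vectors annihilated by $S^{\mathfrak{osp}}$; that is, $\mathrm{Im}(\Phi) \subseteq \mbox{Ker}_{\overline F \otimes \Pi(0)^{1/2}} S^{\mathfrak{osp}}$. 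The key point is the reverse inclusion together with simplicity: I would show that $\mbox{Ker}_{\overline F \otimes \Pi(0)^{1/2}} S^{\mathfrak{osp}}$ is exactly $\mathrm{Im}(\Phi)$ and that this image is simple, so that it must coincide with the unique simple graded quotient $L_{-3/2}(\g)$.

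The cleanest route to both the kernel identification and the branching rule is to compute the kernel filtered by the $\mu(-1)$-eigenvalue, which corresponds to the $h$-weight grading coming from $e \mapsto e^{c}$ and the charge lattice $\Z\tfrac{c}{2}$. Concretely, I would decompose $\overline F \otimes \Pi(0)^{1/2}$ according to the charge $\tfrac{n}{2}c$ and write each charge-$n$ piece using the $L^{Vir}(-2,0)$--decomposition $\overline F = \bigoplus_{n\ge 0}(n+1)L^{Vir}(-2,\tfrac{n^2+n}{2})$ from (\ref{dec-formula}). Applying Lemma \ref{lem-int} to each summand $L^{Vir}(-2,\tfrac{n(n+1)}{2}) \otimes \Pi(0)^{1/2}$ identifies its $S^{\mathfrak{osp}}$--kernel with a single copy of $L^{\g_0}_{-3/2}(n\omega_1)$, generated over $L_{-3/2}(\g_0)$ by $w_n \otimes e^{\frac{n}{2}c}$. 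Summing over $n$ and tracking the multiplicity $(n+1)$ from (\ref{dec-formula}) then gives
\begin{equation}
\mbox{Ker}_{\overline F \otimes \Pi(0)^{1/2}} S^{\mathfrak{osp}} = \bigoplus_{n=0}^{\infty}(n+1)\, L^{\g_0}_{-3/2}(n\omega_1)
\nonumber
\end{equation}
as an $L_{-3/2}(\g_0)$--module, which is precisely the statement of part (2) once part (1) is known.

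It remains to argue that this kernel is a simple vertex superalgebra equal to $L_{-3/2}(\g)$. Since $\mathrm{Im}(\Phi)$ is a vertex subalgebra contained in the kernel and contains, via the odd generators $x, y$ computed in Lemma \ref{expr-xy}, enough structure to generate each charge slice, I would check that $\mathrm{Im}(\Phi)$ already exhausts the kernel: the odd fields raise and lower the charge by $\pm\tfrac12$, and repeated application to the bottom slice $L_{-3/2}(\g_0) = L^{\g_0}_{-3/2}(0)$ produces the generating vectors $w_n \otimes e^{\frac{n}{2}c}$ of all higher slices, so $\mathrm{Im}(\Phi)$ is not contained in any proper charge-graded subspace of the kernel. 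This forces equality, and since $V^{-3/2}(\g)$ has the unique simple graded quotient $L_{-3/2}(\g)$, the image must be $L_{-3/2}(\g)$ itself; the kernel being a simple vertex superalgebra then follows. The main obstacle I anticipate is precisely this last step—verifying that the odd generators act transitively enough on the charge slices to recover every generating vector $w_n \otimes e^{\frac{n}{2}c}$, rather than landing in a proper submodule. This requires a careful computation of the action of $x$ and $y$ (using (\ref{expres-x})--(\ref{expres-y})) on the singular vectors of the Virasoro components, and confirming the nonvanishing of the resulting coefficients; the commutation relations from Lemma \ref{comm-vir} and the explicit singular vectors $w_{n,j}$ from Theorem \ref{sing-vekt-1} should make this tractable.
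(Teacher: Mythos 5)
Your overall strategy is close to the paper's: the inclusion $\mathrm{Im}(\Phi)\subseteq \mbox{Ker}\,S^{\mathfrak{osp}}$, the classification of the relevant vectors via Lemma \ref{lem-int}, and the derivation of part (2) from the kernel description together with the decomposition (\ref{dec-formula}) of $\overline F$ all match what the paper does. However, there is a genuine gap in your treatment of part (1), at exactly the step you flag as the ``main obstacle.'' You propose to prove that the odd generators act transitively \emph{upward}, i.e.\ that starting from the vacuum one reaches every generating vector $w_{n,j}\otimes e^{\frac{n}{2}c}$; this would show that $\mathrm{Im}(\Phi)$ exhausts the kernel, i.e.\ that the kernel is generated by the vacuum as a $\widehat{\g}$--module. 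But that is not the same as simplicity, and your concluding sentence is circular: you infer that the image ``must be $L_{-3/2}(\g)$ itself'' because $L_{-3/2}(\g)$ is the unique simple graded quotient of $V^{-3/2}(\g)$, yet the image is merely \emph{some} graded quotient $V^{-3/2}(\g)/\ker\Phi$, and you may only identify it with $L_{-3/2}(\g)$ after you know it is simple. Cyclicity over the vacuum does not imply simplicity (a highest weight module is always cyclic but rarely irreducible), so ``the kernel being a simple vertex superalgebra then follows'' does not follow.

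What is actually needed is the complementary, \emph{downward} direction: any nonzero graded $\widehat{\g}$--submodule of the kernel contains a $\widehat{\g_0}$--singular vector of dominant integral weight; by Lemma \ref{lem-int} combined with Theorem \ref{sing-vekt-1} such a vector must be of the form $w_{n,j}\otimes e^{\frac{n}{2}c}$; and one must then check that applying $U(\widehat{\g})$ (in particular the lowering parts of the odd fields $x(z)$, $y(z)$) to each such vector produces a nonzero multiple of the vacuum. This is precisely the argument the paper imports from \cite[Proposition 3]{ACGY}. Once simplicity of $\mbox{Ker}_{\overline F\otimes\Pi(0)^{1/2}}S^{\mathfrak{osp}}$ is established, the identification with $L_{-3/2}(\g)$ is immediate (the kernel contains the nonzero vertex subalgebra $\mathrm{Im}(\Phi)$, hence equals it, and a simple quotient of $V^{-3/2}(\g)$ is $L_{-3/2}(\g)$), and your argument for part (2) goes through as written. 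A minor additional caveat: your phrase ``decompose $\overline F\otimes\Pi(0)^{1/2}$ according to the charge $\frac{n}{2}c$ and write each charge-$n$ piece using the Virasoro decomposition'' conflates the two gradings --- the Virasoro decomposition lives on the $\overline F$ factor and the charge on the $\Pi(0)^{1/2}$ factor; Lemma \ref{lem-int} should be applied to each summand $L^{Vir}(-2,\tfrac{n(n+1)}{2})\otimes\Pi(0)^{1/2}$ with the full $\Pi(0)^{1/2}$, which is what produces a single copy of $L^{\g_0}_{-3/2}(n\omega_1)$ per Virasoro summand.
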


\begin{proof} Assume that $w$ is a $\widehat{\g_0}$--singular vector in $\overline F \otimes \Pi(0)^{1/2}$ with dominant integral weight with respect to ${\g_0}$.  Using Lemma \ref{lem-int} we see that $w$ must have the form  $w_n \otimes e^{\frac{n}{2} c}$ for $n \in {\Bbb Z}_{ >0}$ such that $w_n$ is a singular vector for the Virasoro algebra with highest weight $\frac{n (n+1)}{2}$.  So $w_n =w_{n,j}$ for certain $ 0 \le j \le n$. Then using the same arguments as in \cite[Proposition 3]{ACGY} we get that  $\mbox{Ker}_{\overline F \otimes \Pi(0)^{1/2}} S^{\mathfrak{osp}}$ is simple. The assertion (1)  holds.

The proof of the decomposion in (2)  follows from  the screening realization in (1),  Lemma \ref{lem-int} and the decomposition of the vertex superalgebra $\overline F$ from Theorem \ref{sing-vekt-1}. Alternatively, the assertion follows directly using the decomposition of $\mathcal V^{(2)}$ as $L_{-3/2}(\g_0)$--module and Theorem \ref{general-vp-new} below. 
\end{proof}

\section{Coincidences and duality  between $L_{-3/2} (osp(1\vert 2))$ and   $\mathcal V^{(2)}$}
     
     The small  $N=4$ superconformal algebra is realized as the minimal affine $W$--algebra
     $\mathcal W_k(\mathfrak{psl}(2\vert 2), f_{\theta})$ (cf. \cite{KW}). 
     %
      %
      We shall denote $\mathcal W_{1/2}(\mathfrak{psl}(2\vert 2), f_{\theta})$ by $\mathcal V^{(2)}$, since it belongs to the  series of vertex algebras $\mathcal V^{(p)}$ defined in \cite{A-TG} and investigated in detail in \cite{ACGY}.

     Recall (cf.  \cite{A-2019}, \cite{ACGY}) that 
      $$ \mathcal V^{(2)}  = \mbox{Ker}_{ \mathcal{SF}(1)  \otimes \Pi(0) ^{1/2}}  S^{N=4}  $$
  where $$S^{N=4} = \int   e^{\frac{\alpha}{2} + \nu} (z)dz:  \mathcal{SF}(1)\otimes \Pi(0) ^{1/2}  \rightarrow 
     F_{tw} \otimes \Pi_{\nu}.$$
     Recall that there is a conformal embeddings $L_{-3/2}(\g_0) \hookrightarrow  \mathcal V^{(2)}$ and
     $$\mathcal V^{(2)} = \bigoplus_{n =0}  ^{\infty} (n+1) L_{-\tfrac{3}{2}} ^{\g_0} (n \omega_1). $$

 Then Theorem \ref{screening-realization}  implies that vertex superalgebras $\mathcal V^{(2)}$ and $L_{-3/2}(\g)$ are isomorphic as $L_{-3/2} (\g_0)$--modules.

     There are some other coincidences between $\mathcal V^{(2)}$ and 
     $L_{-3/2}(\g)$. By \cite{A-2019} we have screening operator 
     $$S  = \int   e^{\frac{\alpha}{2} + \nu} (z)dz :  F \otimes \Pi(0) ^{1/2}  \rightarrow 
     F_{tw} \otimes \Pi_{\nu}, $$
     such that
     $$ S \vert_{ \mathcal{SF}(1)\otimes \Pi(0)^{1/2} } \equiv S^{N=4}, \quad S \vert _{\overline F \otimes \Pi(0)^{1/2} } \equiv S^{\mathfrak{osp}}.  $$
     This shows that the $N=4$ superconformal algebra $\mathcal V^{(2)}$ and $L_{-3/2} (\g)$ are described as the kernels of the restrictions of the same screening operator. We have:
   $$ \mathcal V^{(2)} = \ \mbox{Ker}_{F \otimes  \Pi(0)^{1/2} } S \bigcap  \mbox{Ker}_{F \otimes  \Pi(0)^{1/2} } \Psi^- (1/2),  $$
     $$L_{-3/2}(\g)= \ \mbox{Ker}_{F \otimes  \Pi(0)^{1/2} } S \bigcap  \mbox{Ker}_{F \otimes  \Pi(0)^{1/2} } \Phi_2  (1/2). $$
     Thus, both algebras are intersection of kernels of two screening operators acting on $F \otimes  \Pi(0)^{1/2}$, the  screening $S$  is identical for both algebras, and only difference are the fermionic screenings acting on $F$.

Using  Proposition     \ref{isomorphism-fields-1}, we can prove a stronger result which says that  $L_{-3/2}(\g)$ can be equipped with the structure of a $\mathcal V^{(2)}$--module.
By Proposition     \ref{isomorphism-fields-1}(1),  the vertex superalgebra  $\widetilde{\mathcal{SF}}(1) \cong \mathcal{SF}(1)$ is realised as the  vertex superalgebra generated by local fields $\varphi^{\pm}(z)$ acting on $\overline{F}$. 
      
      Then 
we have the vertex superalgebra homomorphism 
$\mathcal V^{(2)} \rightarrow \widetilde{\mathcal{SF}}(1) \otimes \Pi(0)^{1/2}$. Denote by $\widetilde{\mathcal V}^{(2)}$ the image of this homomorphism. More precisely,
 since $\mathcal V^{(2)}$ is a simple vertex superalgebra, we have $\widetilde{\mathcal V}^{(2)} \cong \mathcal V^{(2)}$ as vertex superalgebras.

Applying the realisation from  \cite{A-TG} (see also  \cite{ACGY})  we get that $\widetilde{\mathcal V}^{(2)}$  is   generated by
 \begin{itemize}
 \item $\mathfrak{sl}(2)$ generators $e,f,h$ which generate $L_{-3/2}(\mathfrak{sl}(2))$,
 Sugawara Virasoro vector $\omega_{sug}$ (idenified with the  fields in  $\widetilde{\mathcal{SF}}(1) \otimes \Pi(0)^{1/2}$); 
 \item four odd primary fields $G^{\pm}(z), \overline G^{\pm}(z)$, which are expressed as
\bea &G^{+}(z) =  \varphi^{+}(z)   e^{\tfrac{c}{2}} (z) ,  &\overline G^+(z) = 2 \varphi^-(z)  e^{\tfrac{c}{2}}(z), \nonumber \\ &G^-(z) = f(0) G^+(z), \ &\overline G^-(z) =- f(0) \overline G^+(z). \nonumber \eea
 \end{itemize}

          \begin{theorem} \label{isom-osp-N4-direct}         
\item[(1)] $L_{-3/2}(\g)$  has the structure of an irreducible  $\mathcal V^{(2)}$--module, denoted by  $(L_{-3/2}(\g)  , \mathcal Y_{L_{-3/2}(\g)} ^{N=4} (\cdot, z))$, uniquely determined by
\bea  \mathcal Y_{L_{-3/2}(\g)} ^{N=4} ( v , z) &=& v(z)  \quad v \in L_{-3/2}(\g_0), \nonumber \\
 G^+(z) =   \mathcal Y_{L_{-3/2}(\g)} ^{N=4} ( G^+ , z) &=& x(z)  - z ( \frac{d}{dz} x(z) -  (L_{sug} (-1) x) (z)), \nonumber \\
  \overline G^+ (z) =  \mathcal Y_{L_{-3/2}(\g)} ^{N=4} ( \overline G^+ , z)  &=&  - \frac{d}{dz} x(z) +  (L_{sug} (-1) x) (z), \nonumber \\
   G^-(z) =   \mathcal Y_{L_{-3/2}(\g)} ^{N=4} ( G^- , z) &=& -y(z)  + z ( \frac{d}{dz} y(z) -  (L_{sug} (-1) y) (z)), \nonumber  \\
   \overline G^- (z) =  \mathcal Y_{L_{-3/2}(\g)} ^{N=4} ( \overline G^- , z)  &=& -  \frac{d}{dz} y(z) + (L_{sug} (-1) y) (z), \nonumber  
   \eea
  where for $v \in  L_{-3/2}(\g)$, we set $v(z) = Y_{ L_{-3/2}(\g) }(v,z)$.

\item[(2)] $\mathcal V^{(2)}$ has the structure of an (irreducible)   $L_{-3/2}(\g)$--module, 
 denoted by $(\mathcal V^{(2)}, \mathcal Y_{\mathcal V^{(2)}} ^{\g} (\cdot, z))$,which is uniquely determined by  
 \bea  \mathcal Y_{\mathcal V^{(2)}} ^{\g} ( v , z) &=& v(z), \quad v \in L_{-3/2}(\g_0), \nonumber \\
   x(z) = \mathcal Y_{\mathcal V^{(2)}} ^{\g} ( x , z) &=& G^+(z) - z \overline G^+(z), \nonumber \\
   y(z) = \mathcal Y_{\mathcal V^{(2)}} ^{\g} ( y , z) &=& -G^-(z) - z \overline G^+(z), \nonumber 
     \eea
 where for $v \in \mathcal V^{(2)} $, we set $v(z) =  Y_{\mathcal V^{(2)}}(v,z)$.
 \end{theorem}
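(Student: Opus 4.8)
The plan is to transport the module structures of Proposition \ref{isomorphism-fields-1} through the two parallel screening realizations. The key observation, already established in the excerpt, is that $\mathcal V^{(2)} = \mathrm{Ker}_{\mathcal{SF}(1)\otimes\Pi(0)^{1/2}} S^{N=4}$ and $L_{-3/2}(\g) = \mathrm{Ker}_{\overline F\otimes\Pi(0)^{1/2}} S^{\mathfrak{osp}}$ are cut out by restrictions of one and the same screening $S$ acting on $F\otimes\Pi(0)^{1/2}$, and that by Proposition \ref{isomorphism-fields-1}(1) the vertex superalgebra $\widetilde{\mathcal{SF}}(1)$ generated by $\varphi^\pm(z)$ on $\overline F$ is isomorphic to $\mathcal{SF}(1)$. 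First I would use the isomorphism $\widetilde{\mathcal{SF}}(1)\cong\mathcal{SF}(1)$ to produce, via the realization of \cite{A-TG,ACGY}, the image $\widetilde{\mathcal V}^{(2)}\cong\mathcal V^{(2)}$ inside $\widetilde{\mathcal{SF}}(1)\otimes\Pi(0)^{1/2}$, with the explicit odd generators $G^\pm(z),\overline G^\pm(z)$ written in terms of $\varphi^\pm(z)$ and $e^{c/2}(z)$ displayed just before the theorem.

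For part (1), I would define the map $\mathcal Y^{N=4}_{L_{-3/2}(\g)}$ on generators by the stated formulas and verify that it defines a genuine $\mathcal V^{(2)}$--module structure on $L_{-3/2}(\g)$. The heart of this is a field identification: using the free-field expressions $x = e^{\alpha+c/2}+e^{-\alpha+c/2}$ and the expression for $y$ from Lemma \ref{expr-xy}, together with the substitution formulas (\ref{embed-1})--(\ref{embed-2}) relating $\varphi^\pm$ to $\Phi_1,\partial_z\Phi_2$, I would check that $G^+(z)=\varphi^+(z)e^{c/2}(z)$ and $\overline G^+(z)=2\varphi^-(z)e^{c/2}(z)$ acting on $L_{-3/2}(\g)\subset\overline F\otimes\Pi(0)^{1/2}$ coincide with the right-hand sides involving $x(z)$, $\tfrac{d}{dz}x(z)$ and $(L_{sug}(-1)x)(z)$. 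The appearance of the operator $L_{sug}(-1)-D=Q$ from (\ref{vazna-rel-der}) is exactly what converts the nonconformal gradation shift (the factor $z$ and the derivative terms) into the fields of $\mathcal V^{(2)}$; this is where the relation $Q=L_{sug}(-1)-D$ does the essential work. Locality and the Borcherds identities then follow because all fields live inside the vertex superalgebra $F\otimes\Pi(0)^{1/2}$, so the module axioms are inherited. Irreducibility follows by the same argument as in Proposition \ref{isomorphism-fields-1}(2): a proper $\mathcal V^{(2)}$--submodule would, via the inverse formulas, be a $L_{-3/2}(\g)$--submodule, contradicting simplicity of $L_{-3/2}(\g)$.

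Part (2) is proved by inverting the field relations, exactly as in Proposition \ref{isomorphism-fields-1}(3): solving the two linear relations $G^+(z)-z\overline G^+(z)$ and $-G^-(z)-z\overline G^+(z)$ recovers $x(z)$ and $y(z)$, so the formulas for $\mathcal Y^{\g}_{\mathcal V^{(2)}}$ are forced and define a $L_{-3/2}(\g)$--module structure on $\mathcal V^{(2)}$; irreducibility follows by the symmetric simplicity argument.

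I expect the main obstacle to be the field-theoretic verification in part (1)—namely, checking that the generating fields $G^\pm,\overline G^\pm$ of $\widetilde{\mathcal V}^{(2)}$, when written through the $\varphi^\pm$-realization, match the stated expressions in $x(z),y(z)$ modulo the $Q$-correction, and confirming that the full list of $\mathcal V^{(2)}$ operator product expansions (the $N=4$ relations among $G^\pm,\overline G^\pm$ and the $\mathfrak{sl}(2)$ currents) is reproduced. Since both sides are realized concretely inside $F\otimes\Pi(0)^{1/2}$, this reduces to a finite set of OPE computations in the lattice vertex algebra; the conceptual content is entirely captured by Proposition \ref{isomorphism-fields-1} and the identity (\ref{vazna-rel-der}), and the remainder is a direct, if lengthy, calculation.
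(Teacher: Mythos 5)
Your proposal is correct and follows essentially the same route as the paper: the key relation $Q=L_{sug}(-1)-D$ from (\ref{vazna-rel-der}) combined with the embedding formulas (\ref{embed-1})--(\ref{embed-2}) to identify $G^{\pm},\overline G^{\pm}$ with the stated expressions in $x(z),y(z)$, the inverse formulas to deduce irreducibility from simplicity of $L_{-3/2}(\g)$, and Proposition \ref{isomorphism-fields-1}(3) for part (2). The only superfluous step you anticipate is re-verifying the $N=4$ OPEs, which is unnecessary since $G^{\pm},\overline G^{\pm}$ already generate $\widetilde{\mathcal V}^{(2)}\cong\mathcal V^{(2)}$ by construction, so one only needs to check that these fields preserve $L_{-3/2}(\g)$.
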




 \begin{proof} 
   Recall the formula  (\ref{vazna-rel-der}) which gives
$L_{sug} (-1)   = D + Q $, where   $D$ is the derivation on the vertex superalgbra $L_{-3/2}(\g)$ and $Q$ is an operator on $\overline F$.  Therefore
  $$L_{sug}(-1) x = D x  + Q x= D x + \sqrt{2} (D \Phi_1 + \sqrt{-1} D\Phi_2)
  e^{\tfrac{c}{2}},  $$
  which implies that
  $$\sqrt{2} (D \Phi_1 + \sqrt{-1} D\Phi_2)   e^{\tfrac{c}{2}}= L_{sug}(-1) x - Dx. $$

  Now we apply formulas (\ref{embed-1})-(\ref{embed-2}) and get
  \bea G^+(z) &=&
    \left( z \sqrt{2} \left(  \partial_z \Phi_1(z)  +  \sqrt{-1} \partial_z \Phi_2(z) \right) + \sqrt{2} \Phi_1(z) \right) e^{\tfrac{c}{2}} (z) \nonumber  \\
    &=& x(z)  - z ( \frac{d}{dz} x(z) -  (L_{sug} (-1) x) (z))   \label{embed-n4-1}  \\
  \overline G^+(z) &=& -  ( \frac{d}{dz} x(z) -  (L_{sug} (-1) x) (z) ).   \label{embed-n4-2}   
   \eea
    Since $G^-(z), \overline G^-(z)$ are obtained from (\ref{embed-n4-1})-(\ref{embed-n4-2})  by applying the operator $f(0)$, we conclude that all  fields $G^{\pm}(z)$ and $\overline G^{\pm}(z)$ act on  $L_{-3/2}(\g)$. Therefore $L_{-3/2}(\g)$ is a $\mathcal V^{(2)}$--module.
 
Assume that  $L_{-3/2}(\g)$ is not irreducible $\mathcal V^{(2)}$--module. Then it has a proper submodule $0 \ne W \subsetneqq L_{-3/2}(\g)$.
By using formulas (\ref{embed-n4-1})-(\ref{embed-n4-2}) we get
\bea  x(z) =  G^+(z) - z \overline G^+(z), \label{embed-osp-x}\eea
which implies that $W$ is invariant for the field $x(z)$. Since $y(z)= -f(0) x(z)$, we get 
\bea  y(z) = - G^-(z) - z \overline G^-(z), \label{embed-osp-y}\eea
implying that $W$ is also invariant for $y(z)$. Therefore $W$ is $\hg$--invariant, which is a contradiction. This proves the assertion (1).
 
 The  proof of the assertion  (2) is  based on Proposition \ref{isomorphism-fields-1}(3), which shows that the vertex operator $\mathcal Y_{\mathcal{SF}(1)} ^{\overline F}$  defines on $\mathcal{SF}(1)\cong \widetilde{\mathcal{SF}}(1)$ the structure of an irreducible $\overline{F}$--module.
 
 Consider again $\mathcal V^{(2)}$ as a vertex subalgebra of $\widetilde{\mathcal{SF}}(1) \otimes \Pi(0)^{1/2}$. Then, as above, we get formulas (\ref{embed-osp-x})-(\ref{embed-osp-y}), which proves that $\mathcal V^{(2)}$ is a $L_{-3/2}(\g)$--modules.
 Since $G^+(z)$ and $\overline{G}^+(z)$ can be also  expressed from $x(z)$ using formulas  (\ref{embed-n4-1})-(\ref{embed-n4-2}) we conclude that $\mathcal V^{(2)}$ is an irreducible $L_{-3/2}(\g)$--module.
 \end{proof}
 
   \begin{remark}
     The $N=4$ superconformal vertex algebra $\mathcal V^{(2)}$ has appeared in the  $N=4$ super Yang-Mills theory in physics. In the  recent paper \cite{BN}, the authors found a very interesting  exact vector spaces isomorphism between  $\mathcal V^{(2)}$ and the doublet vertex algebra $\mathcal A^{(6)}$. The result from the present paper shows that there is another vector space isomorphism to the vertex algebra associated to $\mathfrak{osp}(1 \vert 2)$ at the critical level.
\end{remark}    
 \section{ A correspondence between $\mathcal V^{(2)}$ and $L_{-3/2}(\g)$--modules}
 
   
 Recent development in the representation theory of affine vertex algebras motivated the study of the category  $\mathcal R$  of modules   (see \cite[Section 2]{KR1} for  a formal definition) which includes:
 \begin{itemize}
 \item Ordinary modules (also called the category $KL_k$), 
 \item Highest weight and lowest weight modules, 
 \item Relaxed highest weight modules (\cite{A-TG}, \cite{A-2019}, \cite{KR1}).
 \end{itemize}
 
 Assume that $V= \mathcal V^{(2)}$ or $V= L_{-3/2}(\g)$. In our case one can show that  $V$--module $W$ is in the category $\mathcal R$ if and only if $W$ is $\tfrac{1}{2} {\Z}_{\ge 0}$--graded:
 $$ W= \bigoplus_{n  \in \tfrac{1}{2} {\Z}_{\ge 0} } W(n)$$
 and if $h(0)$ acts on each graded component $W(n)$ semi-simply with finite-dimensional weight components. 
 
 The irreducible modules in the category $\mathcal R$ can be obtained using Zhu's algebra theory. For any $A(\mathcal V^{(2)})$--module (resp.  $A(L_{-3/2}(\g))$--module) $U$,  let $L^{N=4} (U)$ (resp. $L_{-3/2} ^{\g} (U)$) denote the corresponding $\tfrac{1}{2} {\Z}_{\ge 0}$--graded vertex superalgebra module obtained using Zhu's theory. The irreducible $\tfrac{1}{2} {\Z}_{\ge 0}$--graded  $\mathcal V^{(2)}$--modules were classified in \cite{A-TG}. We will classify and construct  the irreducible $L_{-3/2}(\g)$--modules in the category $\mathcal R$ and find explicit correspondence between irreducible $\mathcal V^{(2)}$--modules and $L_{-3/2}(\g)$--modules.
 
 \label{corresp-modules}
 
\subsection{The representation theory of $\mathcal V^{(2)}$: revisited} 
 Let $U_{\mu, r}$, $\mu, r \in {\C}$,  be   the weight $\g_0= \mathfrak{sl}(2)$--module with basis: $E_i$, $i \in {\Z}$ and the $\g_0$--action is given by
   $$e E_i = E_{i-1}, \ h E_i = -(2r + 2i -\mu ) E_i, \ f E_i = - (r+ i +1)(r+i-\mu) E_{i+1}. $$
   The action of the Casimir $\Omega^{\g_0} = ef + fe + 1/2 h^2$ is
   $\Omega^{\g_0}  \vert  U_{\mu, r} = \frac{\mu (\mu +2)}{2} \mbox{Id}$.

The  representation theory of $\mathcal V^{(2)} = L^{N=4} _{c=-9}$ was studied in \cite{A-TG}. The vertex operator superalgebra 
   $\mathcal V^{(2)}$ has, up to a isomorphism or  parity reversing:
   \begin{itemize}
   \item[(1)] One irreducible module in the category of ordinary modules: the vertex operator superalgebra  $\mathcal V^{(2)}$ itself.
   \item[(2)]  Two irreducible modules in the category $\mathcal O$: $\mathcal V^{(2)}$ and  the highest weight module
   $L^{N=4}(U_{-1})$, where the top component is irreducible highest weight $\mathfrak{sl}(2)$--module with highest weight $-\omega_1$, where $\omega_1$ is the fundamental dominant weight of  $\mathfrak{sl}(2)$.
   \item[(3)]     
   The irreducible relaxed highest weight modules $L^{N=4}(U_{-1, r})$,  $r \notin {\Z}$, where the top component is isomorphic to  $U_{-1, r}$.  
   \end{itemize}
  These modules are explicitly realised in \cite{A-TG}.    Using embedding $\mathcal V^{(2)} \hookrightarrow F \otimes \Pi(0)^{1/2}$ we get a slightly reformulated result:
  
    \begin{prop} \cite{A-TG}  \label{N=4-rev}
    \item[(1)] There exist a non-split extension of  $\mathcal V^{(2)}$--modules
    $$ 0 \rightarrow \mathcal V^{(2)} \rightarrow \mathcal M_{-1} ^{N=4} \rightarrow L^{N=4} (U_{-1}) \rightarrow 0, $$
    where  $\mathcal M_{-1} ^{N=4}$ is a highest weight $\mathcal V^{(2)}$--module with highest weight vector $e^{\alpha -c/2}$.
    
    \item[(2)] Assume that $r \notin {\Z}$. We have:
  $$ L^{N=4}(U_{-1, r })=  \mathcal V^{(2)}.  e^{\alpha/2  -\mu - (r+1/4) c}.$$
  The character of $L^{N=4}(U_{-1, r })$ is given by
  \bea \mbox{ch}[L^{N=4}(U_{-1, r })](q,z) &=& \mbox{Tr} q^{L_{sug} (0)} z ^{h(0)}  \nonumber \\
  &=& z^{-2r} \delta(z^2) \frac{ \prod_{n=1} ^{\infty} (1+ q^{n-3/2} z^{-1})  \prod_{n=1} ^{\infty} (1+ q^{n+1/2} z^) } {\prod_{n=1} ^{\infty} (1-q^n) ^2 }. \nonumber 
  \eea
  \item[(3)] As a $\mathcal V^{(2)}$--modules we have
 $$ F^{tw} \otimes \Pi^{1/2} _{-1} (r+\tfrac{1}{4}) \cong L^{N=4}(U_{-1, r}) \oplus L^{N=4}(U_{-1, r+ \tfrac{1}{2} }) . $$
  \end{prop}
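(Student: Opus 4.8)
The plan is to prove the three assertions of Proposition~\ref{N=4-rev} by working inside the free-field realization $\mathcal V^{(2)} \hookrightarrow F \otimes \Pi(0)^{1/2}$ and using the known representation theory of $\mathcal V^{(2)} = L^{N=4}_{c=-9}$ from \cite{A-TG}. The central observation is that $\mathcal V^{(2)}$ is a simple current extension of $L_{-3/2}(\g_0) \otimes (\text{lattice part})$, so the candidate modules can all be constructed as explicit submodules of Fock-type modules $F^{tw}\otimes \Pi^{1/2}_{(r)}(\lambda)$, on which the generators $e,f,h,G^\pm,\overline G^\pm$ act by the lattice vertex algebra formulas. Throughout I would identify each proposed generating vector (e.g.\ $e^{\alpha-c/2}$, $e^{\alpha/2-\mu-(r+1/4)c}$) and verify directly from the screening/lattice description which $A(\mathcal V^{(2)})$--module its top component generates.

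First I would treat assertion (1). Starting from the vector $e^{\alpha-c/2}$, I would compute the action of $h(0)$ and $L_{sug}(0)$ to identify it as a highest weight vector, and then show by the lattice vertex algebra calculus that the $\mathcal V^{(2)}$--submodule it generates, call it $\mathcal M_{-1}^{N=4}$, contains a copy of $\mathcal V^{(2)}$ as a submodule with quotient $L^{N=4}(U_{-1})$; the non-splitting would follow because the extension is detected by the action of a single generator (a $G^\pm$ or $\overline G^\pm$ zero mode) which maps the top of the quotient nontrivially into the sub, exactly as in the $\mathfrak{sl}(2)$ logarithmic modules of \cite{A-TG}. For assertion (2), with $r\notin\Z$, I would take the weight vector $e^{\alpha/2-\mu-(r+1/4)c}$, check that its $h(0)$--weight and Casimir eigenvalue $\Omega^{\g_0} = \tfrac{\mu(\mu+2)}{2}\mathrm{Id}$ at $\mu=-1$ match those of $U_{-1,r}$, and then conclude that $\mathcal V^{(2)}.e^{\alpha/2-\mu-(r+1/4)c} \cong L^{N=4}(U_{-1,r})$ by simplicity (irreducibility of the relaxed module follows because $r\notin\Z$ keeps the $f$--action eigenvalues $-(r+i+1)(r+i-\mu)$ nonzero, so the $\mathfrak{sl}(2)$ top $U_{-1,r}$ is already irreducible). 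The character formula is then read off as the graded trace over the explicit Fock space basis.

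For assertion (3) I would start from the twisted module $F^{tw}\otimes \Pi^{1/2}_{-1}(r+\tfrac14)$, which carries an untwisted $\mathcal V^{(2)}$--module structure via the realization, and decompose it using the boson-fermion correspondence. The key point is that the neutral fermion $\Phi_2(1/2)$ (equivalently the charge grading on the $bc$ system) splits $F^{tw}$ into two pieces whose tensor with the lattice part furnishes precisely the two relaxed modules $L^{N=4}(U_{-1,r})$ and $L^{N=4}(U_{-1,r+1/2})$; I would match top components by computing $h(0)$--weights and using the classification in part~(2) to identify each summand.

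The hard part will be assertion (1): establishing that the extension is genuinely non-split rather than a direct sum. This requires exhibiting a specific odd zero-mode that connects the socle $\mathcal V^{(2)}$ with the head $L^{N=4}(U_{-1})$ and verifying it does not vanish on the relevant singular vector, which is a delicate lattice vertex algebra computation rather than a formal argument; the semisimplicity of $h(0)$ alone does not preclude indecomposability since $L_{sug}(0)$ may act non-semisimply across the extension. I would isolate this computation and reduce it to the analogous non-split extension already recorded for the $\mathfrak{sl}(2)$ doublet/triplet modules in \cite{A-TG}, transporting it through the embedding $\mathcal V^{(2)}\hookrightarrow F\otimes\Pi(0)^{1/2}$.
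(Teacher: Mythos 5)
Your plan is a from-scratch reconstruction of facts that the paper simply imports: Proposition \ref{N=4-rev} is labelled \cite{A-TG}, and the paper's entire proof of (1) and (2) is a change of lattice coordinates --- in \cite{A-TG} one has $\mathcal M_{-1}^{N=4}=\mathcal V^{(2)}.e^{-\delta}$ and $L^{N=4}(U_{-1,r})=\mathcal V^{(2)}.e^{\beta-\delta-r(\alpha+\beta)}$ in the coordinates of \cite[Section 2]{ACGY}, and the only new content is the identities $-\delta=\alpha-c/2$ and $\beta-\delta-r(\alpha+\beta)=\alpha/2-\mu-(r+1/4)c$. The non-splitness in (1), the irreducibility in (2), and the character formula are all taken as already established there. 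Your instinct at the end --- to ``transport'' the non-split extension through the embedding rather than recompute it --- is exactly what the paper does, but for the whole proposition, not just that step.

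If you insist on reproving things, two steps in your sketch have genuine gaps. First, in (2) you argue that irreducibility of $\mathcal V^{(2)}.e^{\alpha/2-\mu-(r+1/4)c}$ follows because the top component $U_{-1,r}$ is an irreducible $\g_0$--module for $r\notin\Z$. That is not enough: a cyclic $\tfrac12\Z_{\ge 0}$--graded module generated by its top can have a proper submodule intersecting the top trivially. Compare the paper's proof of the $\mathfrak{osp}$ analogue $L^{\g}_{-3/2}(U_0(r))$, where ruling this out requires Lemma \ref{lemma-key} together with the character formula to show all vectors of conformal weight $0$ or $-1/2$ already sit in the top. You would need the analogous conformal-weight argument (or the citation). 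Second, in (3) matching top components with $U_{-1,r}\oplus U_{-1,r+1/2}$ only pins down the composition factors; to get a \emph{direct sum} rather than a non-split extension you still need the structure theory of $\mathcal V^{(2)}$--modules from \cite{A-TG} (this is precisely the step where the paper writes ``the representation theory of $\mathcal V^{(2)}$ from \cite{A-TG} implies''). Your proposed mechanism --- that the neutral fermion $\Phi_2(1/2)$ or the $bc$--charge grading splits $F^{tw}$ into the two pieces --- is not how the splitting arises and would not by itself exclude a non-trivial extension between the two summands.
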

 \begin{proof}
 In \cite{A-TG} we proved that
 $$\mathcal M_{-1} ^{N=4}  = \mathcal V^{(2)}.  e^{-\delta}, \quad  L^{N=4}(U_{-1, r })=  \mathcal V^{(2)}.  e^{\beta -\delta - r (\alpha + \beta) },$$
 where $\delta, \alpha, \beta$ are as in \cite[Section 2]{ACGY} in the case $p=2$. Direct calculation shows that
 $$ -\delta = \alpha  - c/2, \beta -\delta - r (\alpha + \beta)= \alpha /2   -\mu - (r+1/4) c. $$
 
 Next we consider the twisted $F \otimes \Pi(0) ^{1/2}$--module
 $W  = F^{tw} \otimes \Pi_{-1} ^{1/2}(r+1/4) $ as un untwisted $\mathcal V^{(2)}$--module. The top component $W_{top}$ is isomorphic to the direct sum of $\g_0$--modules:
 $$ W_{top} = U_{-1,r}\oplus U_{-1, r+ \tfrac{1}{2}}. $$
 Then the representation theory of the vertex algebra $\mathcal V^{(2)}$ from \cite{A-TG} implies that
 $$ W \cong  L^{N=4}(U_{-1, r}) \oplus L^{N=4}(U_{-1, r+ \tfrac{1}{2} }). $$
 The proof follows.
 \end{proof}
 \subsection{The representation theory of $L_{-3/2}(\mathfrak{osp}(1 \vert 2))$}
 The universal affine vertex algebra $V^{-3/2} (\g)$ contains  a singular vector
 $$ T= \left( e(-1) f(-1) + f(-1) e(-1) + \frac{1}{2} h(-1) ^2 + \frac{1}{2} (x(-1) y(-1) - y(-1) x(-1)) \right) {\bf 1},$$
 therefore the Zhu's algebra $A(L_{-3/2}(\g))$ is a quotient of the associative algebra
 $U(\g) / \langle \Omega^{\g} \rangle$,
 where $\langle \Omega^{\g}\rangle$ is the two-sided ideal generated by the  Casimir central element 
$$  \Omega^{\g}= e f + f e + \frac{1}{2} h^2 + \frac{1}{2} (y x - x y ). $$
Let $\Sigma = x  y - y x + 1/2$ be the super Casimir.
%
We have (see \cite[Section 2]{W-2020}):
$$ \Omega^{\g}  = \frac{1}{2} \Sigma ^2 - \frac{1}{8}.$$

 Then on every  ${\Z}_{\ge 0}$--graded $L_{-3/2}(\g)$--module $W$ we must have:
 $$ \Omega^{\g}  \equiv 0 \quad \mbox{on} \ W_{top}. $$
 This implies that $ \Sigma ^2 = \frac{1}{4} \mbox{Id}$ and therefore:
 $$ W_{top} = W_{top} ^+ \oplus W_{top} ^- $$ such that
 $$ \Sigma \equiv \pm \frac{1}{2} \mbox{Id} \quad \mbox{on} \ W_{top}^{\pm}. $$ 
 We get:
 $$ \Omega^{\g_0}   \vert W_{top}^{+ } \equiv 0, \quad \Omega^{\g_0}  \vert W_{top}^{- } = -\frac{1}{2}\mbox{Id}. $$
  Since on $W_{top}$ we have $L_{sug} (0) \equiv \Omega^{\g_0}$, we get the following important lemma:
  
  \begin{lemma} \label{lemma-key} Let $W$ be a ${\Z}_{\ge 0}$--graded 
  $L_{-3/2}(\g)$--module. Then $W_{top} = W_{top} ^+ \oplus W_{top} ^-$ such that:
  $$ L_{sug} (0) \equiv 0 \ \mbox{on} \ W_{top} ^+, \quad  L_{sug}(0) \equiv -\frac{1}{2} \mbox{Id}  \ \mbox{on} \ W_{top} ^- . $$
  \end{lemma}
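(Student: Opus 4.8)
The plan is to deduce the lemma by combining the spectral decomposition of the super-Casimir $\Sigma$ on $W_{top}$ with the identification of $L_{sug}(0)$ with the quadratic Casimir $\Omega^{\g_0}$ of the even part. All the analytic ingredients have essentially been assembled just above the statement, so the real work is to record cleanly why each holds and then substitute.

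First I would pin down the normalization $L_{sug}(0) \equiv \Omega^{\g_0}$ on $W_{top}$. The point is that the level for the subalgebra $\g_0 \cong \mathfrak{sl}(2)$ is still $k=-3/2$, which is \emph{non}-critical for $\mathfrak{sl}(2)$ since $k+h^{\vee} = -3/2 + 2 = 1/2 \neq 0$; hence $\omega_{sug}^{\g_0}$ is well defined. With the form normalized by $(e,f)=1,\ (h,h)=2$, its zero mode is $L_{sug}(0) = \frac{1}{2(k+h^{\vee})}\left(\Omega^{\g_0} + 2\sum_{n>0}\sum_a x_a(-n)x^a(n)\right)$. On $W_{top}$ every positive mode acts as zero and $\frac{1}{2(k+h^{\vee})} = 1$, so $L_{sug}(0)$ collapses to $\Omega^{\g_0}$ there. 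This is exactly the identity recorded immediately before the lemma, so I may simply invoke it.

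Next I would extract the spectral data from the critical-level singular vector $T$. Since $\Omega^{\g} \equiv 0$ on $W_{top}$ (as $W_{top}$ is an $A(L_{-3/2}(\g))$-module and $\Omega^{\g}$ lies in the defining ideal) and $\Omega^{\g} = \frac{1}{2}\Sigma^2 - \frac{1}{8}$ by \cite[Section 2]{W-2020}, we obtain $\Sigma^2 = \frac{1}{4}\mbox{Id}$ on $W_{top}$. Because $\Sigma$ is even and central in $U(\g_0)$, and its minimal polynomial on $W_{top}$ divides $(X-\tfrac12)(X+\tfrac12)$, it is diagonalizable with eigenvalues $\pm\tfrac12$, giving the $\g_0$-module splitting $W_{top} = W_{top}^+ \oplus W_{top}^-$ with $\Sigma \equiv \pm\tfrac12$ on $W_{top}^{\pm}$. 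Writing $yx - xy = -\Sigma + \tfrac12$ and feeding this into $\Omega^{\g} = \Omega^{\g_0} + \tfrac12(yx-xy) = 0$ on $W_{top}$ yields $\Omega^{\g_0} = \tfrac12\Sigma - \tfrac14$ there, whence $\Omega^{\g_0} \equiv 0$ on $W_{top}^+$ and $\Omega^{\g_0} \equiv -\tfrac12\mbox{Id}$ on $W_{top}^-$.

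Finally, substituting these two values into $L_{sug}(0) \equiv \Omega^{\g_0}$ gives $L_{sug}(0) \equiv 0$ on $W_{top}^+$ and $L_{sug}(0) \equiv -\tfrac12\mbox{Id}$ on $W_{top}^-$, which is precisely the assertion. The only genuinely non-formal inputs are the Sugawara normalization $\frac{1}{2(k+h^{\vee})} = 1$ at $k=-3/2$ and the quadratic identity tying together $\Omega^{\g}$, $\Omega^{\g_0}$ and $\Sigma$; once these are verified the remaining steps amount to diagonalizing a single even operator of known square, so I expect the main (and only) delicate point to be checking the normalization constant and the centrality of $\Sigma$ in $U(\g_0)$, after which the conclusion is immediate.
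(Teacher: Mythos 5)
Your proposal is correct and follows essentially the same route as the paper: use the singular vector $T$ to force $\Omega^{\g}\equiv 0$ on $W_{top}$, deduce $\Sigma^2=\tfrac14\,\mbox{Id}$ from $\Omega^{\g}=\tfrac12\Sigma^2-\tfrac18$, split $W_{top}$ into $\Sigma$-eigenspaces, compute $\Omega^{\g_0}=\tfrac12\Sigma-\tfrac14$ on $W_{top}$, and identify $L_{sug}(0)$ with $\Omega^{\g_0}$ on the top level. The only difference is that you spell out the Sugawara normalization $\tfrac{1}{2(k+h^{\vee})}=1$ and the diagonalizability of $\Sigma$, which the paper leaves implicit.
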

 
 Denote by $  U_{\g} (t\omega_1)$ the irreducible $\g$--module with highest weight $t\omega_1$, and the corresponding $V^{-3/2} (\g)$--module by $L_{-3/2} ^{\g} (t \omega_1)$.

 \begin{prop} \label{class-O}The set:
 $$ \{ L_{-3/2}(\g) , L_{-3/2} ^{\g} (  -\omega_1 )\}$$
 is a complete set of irreducible  $L_{-3/2}(\g)$--modules in the category $\mathcal O$. There exist  a highest weight module $\mathcal  M_{-1} ^{\g}$ such that the following extension of  $L_{-3/2}(\g)$--modules is non-split:
  $$ 0 \rightarrow L_{-3/2}(\g) \rightarrow 
\mathcal  M_{-1} ^{\g} 
 \rightarrow L_{-3/2} ^{\g} (  -\omega_1 )  \rightarrow 0. $$
 \end{prop}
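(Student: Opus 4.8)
The plan is to separate the proposition into its two assertions and handle them by different techniques: the classification by Zhu--algebra methods together with Lemma \ref{lemma-key}, and the non-split extension by transporting Proposition \ref{N=4-rev}(1) through the duality of Theorem \ref{isom-osp-N4-direct}.

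For the classification I would invoke Zhu's correspondence: an irreducible $L_{-3/2}(\g)$--module in $\mathcal O$ is determined by its top, an irreducible highest weight module for the Zhu algebra $A(L_{-3/2}(\g))$, which is a quotient of $U(\g)/\langle\Omega^{\g}\rangle$. On such a top the relation $\Omega^{\g}=\tfrac12\Sigma^2-\tfrac18=0$ forces $\Sigma^2=\tfrac14$, so $\Sigma$ acts by $\pm\tfrac12$. Evaluating the super Casimir $\Sigma=xy-yx+\tfrac12$ on a $\g$--highest weight vector $v_\lambda$ of weight $\lambda$, using $xv_\lambda=0$ and $\{x,y\}=h$, gives $\Sigma v_\lambda=(\lambda+\tfrac12)v_\lambda$. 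Hence $\lambda+\tfrac12=\pm\tfrac12$, i.e. $\lambda\in\{0,-1\}$. These two weights reproduce exactly the data of Lemma \ref{lemma-key}: for $\lambda=0$ one gets $\Omega^{\g_0}=0$ and $L_{sug}(0)=0$ (the vacuum, $L_{-3/2}(\g)$ itself), and for $\lambda=-1$ one gets $\Omega^{\g_0}=-\tfrac12$ and $L_{sug}(0)=-\tfrac12$ (the module $L_{-3/2}^{\g}(-\omega_1)$); both are consistent with the $\mathfrak{sl}(2)$--Casimir eigenvalue $\tfrac{\lambda(\lambda+2)}{2}$, so no further relation is violated.

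I would then confirm that both candidates are genuine irreducible $L_{-3/2}(\g)$--modules rather than merely $V^{-3/2}(\g)$--modules, so that the singular vector $T$ acts trivially: for $\lambda=0$ this is the algebra itself, and for $\lambda=-1$ I would exhibit the module inside the realization $\overline F\otimes\Pi(0)^{1/2}$ of Theorem \ref{screening-realization}, or equivalently as the image under the module correspondence described next. This yields completeness of $\{L_{-3/2}(\g),L_{-3/2}^{\g}(-\omega_1)\}$. For the non-split extension I would start from the non-split sequence of $\mathcal V^{(2)}$--modules $0\to\mathcal V^{(2)}\to\mathcal M_{-1}^{N=4}\to L^{N=4}(U_{-1})\to 0$ of Proposition \ref{N=4-rev}(1), and define $\mathcal M_{-1}^{\g}$ to be the underlying space of $\mathcal M_{-1}^{N=4}$ equipped with the $L_{-3/2}(\g)$--action coming from the duality formulas (\ref{embed-osp-x})--(\ref{embed-osp-y}): the common $L_{-3/2}(\g_0)$--action together with the odd fields $x(z),y(z)$ expressed through $G^{\pm}(z),\overline G^{\pm}(z)$. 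As in the proof of Theorem \ref{isom-osp-N4-direct}, these identities and their inverses (\ref{embed-n4-1})--(\ref{embed-n4-2}) show that a subspace is $\mathcal V^{(2)}$--invariant if and only if it is $\hg$--invariant, so the two submodule lattices of $\mathcal M_{-1}^{\g}$ coincide. Consequently the sequence is exact as $L_{-3/2}(\g)$--modules with sub $\cong L_{-3/2}(\g)$ and quotient $\cong L_{-3/2}^{\g}(-\omega_1)$ (matching tops via the $\lambda=0$ and $\lambda=-1$ identifications above), and any $L_{-3/2}(\g)$--splitting would give a complement that, being also $\mathcal V^{(2)}$--invariant, splits the original sequence — a contradiction.

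The main obstacle is the module--level part of the duality, namely making rigorous that $\mathcal M_{-1}^{N=4}$ carries a compatible $L_{-3/2}(\g)$--structure. This requires the operator identities (\ref{embed-osp-x})--(\ref{embed-osp-y}) to hold on $\mathcal M_{-1}^{N=4}$, which I would secure by realizing $\mathcal M_{-1}^{N=4}$ (with highest weight vector $e^{\alpha-c/2}$) inside the common ambient module on which both the $\mathcal V^{(2)}$-- and the $L_{-3/2}(\g)$--fields act, exactly as in Section \ref{new-simpl-ferm} and the realization of Theorem \ref{screening-realization}. The remaining bookkeeping — checking that $e^{\alpha-c/2}$ is a highest weight vector also for $L_{-3/2}(\g)$ and that the quotient top is the irreducible $\g$--module of highest weight $-\omega_1$ — is routine once this common realization is in place.
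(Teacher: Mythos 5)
Your proposal is correct, and its first half (the classification) is essentially the paper's argument: the paper likewise reduces to the Zhu algebra $U(\g)/\langle\Omega^{\g}\rangle$ and the condition $\Omega^{\g}|_{U_{\g}(t\omega_1)}=0$, which gives $t(t+1)=0$ and hence $t\in\{0,-1\}$; your route through $\Sigma v_\lambda=(\lambda+\tfrac12)v_\lambda$ is the same computation phrased via the super Casimir. For the second half the two arguments diverge in mechanism while resting on the same realization. The paper defines $\mathcal M_{-1}^{\g}=L_{-3/2}(\g).e^{\alpha-c/2}$ directly inside the free-field picture, gets non-splitness from the explicit relation $x(0)e^{\alpha-c/2}={\bf 1}$ (so the vacuum lies in the cyclic module generated by the highest weight vector), and proves irreducibility of the quotient by combining Lemma \ref{lemma-key} with the fact that $\mathcal M_{-1}^{\g}\cong\mathcal M_{-1}^{N=4}$ only as $L_{-3/2}(\g_0)$--modules, so that no vectors of $L_{sug}(0)$--weight $0$ or $-\tfrac12$ survive in a putative proper submodule. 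You instead transport the full non-split $\mathcal V^{(2)}$--sequence through the coincidence of submodule lattices coming from the identities (\ref{embed-osp-x})--(\ref{embed-osp-y}) and (\ref{embed-n4-1})--(\ref{embed-n4-2}); this buys you non-splitness and irreducibility of the quotient in one stroke, at the cost of having to verify (as you correctly flag) that those operator identities hold on the ambient untwisted $F\otimes\Pi(0)^{1/2}$--module containing $e^{\alpha-c/2}$ --- which they do, since all fields involved are restrictions of fields on that common space. The paper's version is more economical in that it only needs the $\g_0$--module identification plus a character count, whereas yours exercises the duality of Theorem \ref{isom-osp-N4-direct} at the level of non-vacuum modules, which is a slightly stronger (but true and independently interesting) statement.
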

 \begin{proof}
 Define $\mathcal   M_{-1} ^{\g}  = L_{-3/2}(\g). e^{\alpha - c/2}$.  
Then $\mathcal   M_{-1} ^{\g}$ is a highest weight $L_{-3/2}(\g)$--module, and therefore  its    simple quotient $L^{\g} _{-3/2} (   - \omega_1 )$ is also $L_{-3/2}(\g)$--module.

 Assume that $L^{\g} _{-3/2} (  t \omega_1)$ is a $L_{-3/2}(\g)$--module. Then 
 $$ \Omega^{\g}  \vert U_{\g} (t \omega_1) \equiv 0 \implies t (t+1) =0. $$
 Therefore $t = 0$ or $t = -1$.
 The case $t = 0$ corresponds to the vertex operator superalgebra $L_{-3/2}(\g)$, and  $t = -1$ to $L^{\g}_{-3/2} (  -\omega_1)$.

 Note that in $\mathcal  M_{-1} ^{\g}$ we have 
 $x(0) e^{\alpha - c/2} = {\bf 1}$, we have that $L_{-3/2}(\g)$ is a submodule of $\mathcal M_{-1} ^{\g}$. It remains to prove that
the quotient module $W= \mathcal  M_{-1} ^{\g} / L_{-3/2}(\g)$ is irreducible.

If  $W$ is not  simple, the $W$ must contain a proper submodule $W'$ which by Lemma \ref{lemma-key} should contain vectors of $L_{sug} (0)$--conformal weights $0$ or $-1/2$.

Since as a $L_{-3/2} (\g_0)$--modules  $$\mathcal  M_{-1} ^{\g} \cong \mathcal M_{-1} ^{N=4}, \mathcal V^{(2)} \cong L_{-3/2}(\g), $$
we conclude that $W'$ can not have vectors of $L_{sug}(0)$--conformal weights $0$ or $-1/2$. A contradiction. The proof follows.

 \end{proof}
 
 Now we want to construct and classify irreducible  relaxed highest weight $L_{-3/2}(\g)$--modules. The top components of these modules are irreducible $\g$--modules on which $\Omega^{\g}$ acts trivially. It is not difficult  to classify these  modules.
 
 \begin{lemma} Assume that $U$ is an irreducible  infinite-dimensional $\g$--module with $1$-dimensional weight spaces such that $\Omega^{\g} \vert U \equiv 0$. Then $U$ is isomorphic (up to parity reversing) to exactly one of the following modules \begin{itemize}
 \item highest weight module $U^{\g} (-\omega_1)$;
 \item lowest weight module $( U^{\g}(-\omega_1) )^{*}$;
 \item module $U^{\g} _0 (r)$ with basis $E_{i}, E_{i+1/2}$, $i \in {\Z}$ and $\g$--action defined by
 \bea
  && e E_{i} = E_{i-1}, \ h E_i = -(2 i + 2r +1) E_i,  \  f E_i = - (r+i+1) ^2 E_{i+1} \nonumber \\
 && e E_{i-1/2} = E_{i-3/2}, \ h E_{i-1/2} =- (2i + 2r) E_{i-1/2}, \ f E_{i-1/2} =- (r+i + 1) (r+i ) E_{i+1/2} \nonumber \\
 && x E_i = E_{i-1/2}, \ y E_i = - (r+i+1) E_{i+1/2}  \nonumber \\
 && x E_{i-1/2} = E_{i-1} , \ y E_{i-1/2} = - (r+i ) E_{i}. \nonumber 
 \eea
 As an $sl(2)$--module:  $U^{\g} _0 (r) \cong U_{-1, r} \oplus U_{0, r}$
 \end{itemize}
 
 \end{lemma}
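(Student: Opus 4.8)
The plan is to classify $U$ through its $h$-weight structure, using the super Casimir $\Sigma = xy - yx + \tfrac12$ as the main organizing tool. Since $\Omega^{\g} = \tfrac12 \Sigma^2 - \tfrac18$ and $\Omega^{\g}\vert U \equiv 0$, we have $\Sigma^2 = \tfrac14$ on $U$; because $\Sigma$ commutes with $\g_0 = \mathfrak{sl}(2)$ and $h$ acts semisimply with one-dimensional weight spaces, $\Sigma$ must act on each weight space by a scalar $\pm\tfrac12$. First I would record the two elementary facts that drive everything: (i) $\Sigma$ anticommutes with the odd generators, $\Sigma x = -x\Sigma$ and $\Sigma y = -y\Sigma$, which is a direct check using $[e,y]=-x$, $[f,x]=-y$, $x^2=e$, $y^2=-f$; and (ii) substituting $yx - xy = \tfrac12 - \Sigma$ into $\Omega^{\g} = \Omega^{\g_0} + \tfrac12(yx-xy)$ gives $\Omega^{\g_0} = \tfrac{\Sigma}{2} - \tfrac14$, so on the eigenspace where $\Sigma \equiv \varepsilon\tfrac12$ one has $\Omega^{\g_0} \equiv \tfrac{\varepsilon}{4} - \tfrac14$, i.e. $\Omega^{\g_0}\equiv 0$ when $\varepsilon = +1$ and $\Omega^{\g_0}\equiv -\tfrac12$ when $\varepsilon = -1$.

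Using (i) and (ii) I would split $U = U^+ \oplus U^-$ into the $\Sigma = \pm\tfrac12$ eigenspaces. Both are $\g_0$-submodules with one-dimensional weight spaces, and the odd generators interchange them (as they anticommute with $\Sigma$). By the $\mathfrak{sl}(2)$ value $\Omega^{\g_0}\vert U_{\mu,r} = \tfrac{\mu(\mu+2)}{2}$, the component $U^+$ (Casimir $0$) has $\mathfrak{sl}(2)$-type $\mu \in \{0,-2\}$, while $U^-$ (Casimir $-\tfrac12$) forces $\mu = -1$. Since $x,y$ shift $h$-weight by $\pm1$ and intertwine $U^+$ and $U^-$, the weights of $U$ fill a single coset $\lambda + \Z$, and irreducibility together with infinite-dimensionality leaves exactly three shapes: bounded above, bounded below, or unbounded in both directions. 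These three cases are mutually exclusive, giving the "exactly one" part of the statement from the distinct weight supports.

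In the bounded-above case there is a highest weight vector $v$ with $xv=0$, hence $ev = x^2 v = 0$, so $U$ is a highest weight module $U^{\g}(t\omega_1)$; the already established implication $\Omega^{\g}\vert U^{\g}(t\omega_1)=0 \Rightarrow t(t+1)=0$ leaves $t=0$ (finite-dimensional, excluded) or $t=-1$, giving $U \cong U^{\g}(-\omega_1)$. The bounded-below case is dual and yields $(U^{\g}(-\omega_1))^{*}$. The remaining dense case is the main point: here neither $U^+$ nor $U^-$ can be bounded (else the irreducible $U$ would be), so $U^- \cong U_{-1,r}$ and $U^+ \cong U_{0,r}$ are the dense $\mathfrak{sl}(2)$-modules with the Casimir values above and with interleaved cosets. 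To finish I would choose a weight basis $E_i, E_{i-1/2}$ adapted to $U^\pm$, normalize the free scalars in the action of $x$, and then read off $y$ from $h = \{x,y\}$ together with the constraint $\Sigma\equiv \pm\tfrac12$; this determines all structure constants and reproduces the displayed formulas for $U^{\g}_0(r)$, with $r$ the continuous parameter labelling the coset. The decomposition $U^{\g}_0(r)\cong U_{-1,r}\oplus U_{0,r}$ is then immediate from the split $U = U^-\oplus U^+$.

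The main obstacle I expect is the bookkeeping in the dense case: fixing the basis normalization so that the reconstructed action is literally the stated one rather than merely an isomorphic copy, checking that the recursions coming from $h=\{x,y\}$ and $\Sigma = \pm\tfrac12$ are jointly consistent around the whole $\Z$-ladder, and correctly identifying the range of $r$ together with the "up to parity reversing" ambiguity, which simply interchanges the roles of $U^+$ and $U^-$. By contrast the highest and lowest weight cases are routine once the relation $t(t+1)=0$ from the text is invoked.
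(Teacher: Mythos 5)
Your proposal is correct and follows essentially the same route as the paper: split $U$ by the eigenvalue $\pm\tfrac12$ of the super Casimir $\Sigma$, deduce the $\mathfrak{sl}(2)$-Casimir values $0$ and $-\tfrac12$ on the two summands, settle the highest/lowest weight cases via $t(t+1)=0$, and identify the dense case as $U_{-1,r}\oplus U_{0,s}$ with the coset-compatibility condition $r\equiv s \pmod{\Z}$. The only difference is that you make explicit the anticommutation $\Sigma x=-x\Sigma$, $\Sigma y=-y\Sigma$ underlying the interchange of the two eigenspaces, which the paper leaves implicit by invoking its Lemma 6.5.
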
 
 \begin{proof} We already proved in Proposition \ref{class-O} that $U^{\g} (-\omega_1)$ is the unique irreducible  infinite-dimensional highest weight module annihilated by $\Omega^{\g}$. Arguments for lowest weight modules are completely dual.
 Next we assume that $U$ is neither lowest nor highest weight. Using Lemma \ref{lemma-key} we get that $U$ is as $\g_0$--module a direct sum of two weight modules with $1$--dimensional weight spaces on which $\Omega^{\g_0}$ acts by zero or $-\tfrac{1}{2} \mbox{Id}$. So:
 $$ U \cong U_{-1,r}  \oplus U_{0, s}.  $$
 for certain $r$ and $s$. But one gets that $U_{-1,r}  \oplus U_{0, s}$ is a $\g$--module if and only if $r\equiv s \ \mbox{mod}(\Z)$. The proof follows.
 \end{proof}
 
 Let $L^{\g} _{-3/2} (U_{0} (r))$ be the irreducible $V^{-3/2}(\g)$--module whose top component is  $U^{\g} _0 (r)$.
 \begin{theorem} Assume that $r \notin {\Z}$. Then 
 $L^{\g} _{-3/2} (U_{0} (r))$ is a  $L_{-3/2}(\g)$--module and it is realised as
 $$ L^{\g} _{-3/2} (U_{0} (r)) = L_{-3/2} (\g). e^{\alpha/ 2 - \mu - (r + 1/4) c}. $$
 The basis of the top component $U^{\g} _{0} (r)$ is given by
 $$ E_i = e^{\alpha/ 2 - \mu -  (r +  i + 5/4) c}, \quad
  E_{i-1/2} = e^{-\alpha/ 2 - \mu -  (r +  i + 3/4) c}. $$
  As a $L_{-3/2}(\g)$--module we have:
  $$ F_{tw} \otimes  \Pi_{-1} ^{1/2} (r+\tfrac{1}{4}) \cong  L^{\g}_{-3/2} (U_{0} (r)) \oplus  L^{\g} _{-3/2} (U_{0} (r+\tfrac{1}{2})). $$
 \end{theorem}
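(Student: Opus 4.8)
The plan is to deduce the whole statement from the single twisted module
\[
W:=F_{tw}\otimes\Pi^{1/2}_{-1}(r+\tfrac14),
\]
which is precisely the space whose $\mathcal V^{(2)}$--module structure is recorded in Proposition \ref{N=4-rev}(3), and then to transport that description to the $L_{-3/2}(\g)$--side via the duality of Theorem \ref{isom-osp-N4-direct}. I would organize the argument in three steps: (a) identify the listed vectors with the top $U^{\g}_0(r)\oplus U^{\g}_0(r+\tfrac12)$ of $W$; (b) show that $W$ is a module for the \emph{simple} vertex superalgebra $L_{-3/2}(\g)$, not merely for $V^{-3/2}(\g)$; and (c) split $W$ into the two asserted irreducible summands.

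For (a), since $W$ is a twisted $\overline F\otimes\Pi(0)^{1/2}$--module, the generators $e=e^{c}$, $h=2\mu(-1)$ and the odd fields $x=e^{\alpha+c/2}+e^{-\alpha+c/2}$ and $y$ of Lemma \ref{expr-xy} act by the usual lattice formulas. A direct computation of the zero modes on $e^{\pm\alpha/2-\mu-(\ast)c}$ reproduces the defining relations of $U^{\g}_0(r)$: for instance the $e^{-\alpha+c/2}$ term gives $x(0)\,e^{\alpha/2-\mu-(r+i+5/4)c}=e^{-\alpha/2-\mu-(r+i+3/4)c}$, i.e. $xE_i=E_{i-1/2}$, and the formulas for $hE_i,fE_i,xE_{i-1/2},yE_{i-1/2}$ follow by the same bookkeeping. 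For (b), I would invoke Theorem \ref{screening-realization}(1): since $\mathrm{Im}(\Phi)=L_{-3/2}(\g)$ and this algebra is simple, the singular vector $T$ lies in $\ker\Phi$, so $\Phi(T)=0$ in $\overline F\otimes\Pi(0)^{1/2}$. Because the $V^{-3/2}(\g)$--action on any (twisted) $\overline F\otimes\Pi(0)^{1/2}$--module sends $a$ to $Y_{tw}(\Phi(a),z)$, it factors through $\mathrm{Im}(\Phi)=L_{-3/2}(\g)$; in particular $T$ acts by $0$ on $W$, so $W$, and every submodule of $W$, is an $L_{-3/2}(\g)$--module. This already gives the first assertion once (c) identifies the submodule generated by $E_{-1}=e^{\alpha/2-\mu-(r+1/4)c}$.

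For (c), the splitting is governed by a $\Z$-parity on $W$. Writing a lattice vector of $W$ as $e^{\,m\alpha+\alpha/2-\mu+(r+\frac14+\frac n2)c}$ (times oscillators) with $m,n\in\Z$, the residue $(n-m)\bmod 2$ is preserved by all of $e,f,h,x,y$, and equally by the $N=4$ fields $G^{\pm},\overline G^{\pm}$, since each generator shifts $m$ and $n$ by amounts of equal parity. The two residue classes are the sectors $r$ (containing $E_{-1}$, where $m=n=0$) and $r+\tfrac12$ (containing $e^{\alpha/2-\mu-(r+3/4)c}$), so $W=W^{(r)}\oplus W^{(r+1/2)}$ as $\widehat\g$--modules, with $W^{(s)}$ having top $U^{\g}_0(s)$. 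This is the same splitting under which $W\cong L^{N=4}(U_{-1,r})\oplus L^{N=4}(U_{-1,r+1/2})$ in Proposition \ref{N=4-rev}(3), so $W^{(s)}\cong L^{N=4}(U_{-1,s})$ as a $\mathcal V^{(2)}$--module. To see that each $W^{(s)}$ is irreducible as an $L_{-3/2}(\g)$--module I would argue exactly as in Theorem \ref{isom-osp-N4-direct}: formulas (\ref{embed-n4-1})--(\ref{embed-n4-2}) express $G^{\pm},\overline G^{\pm}$ through $x,y$ and $L_{sug}(-1)$, while (\ref{embed-osp-x})--(\ref{embed-osp-y}) express $x,y$ through $G^{\pm},\overline G^{\pm}$, so an $L_{-3/2}(\g)$--submodule of $W^{(s)}$ is the same thing as a $\mathcal V^{(2)}$--submodule. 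Since $W^{(s)}\cong L^{N=4}(U_{-1,s})$ is $\mathcal V^{(2)}$--irreducible for $r\notin\Z$ (Proposition \ref{N=4-rev}), $W^{(s)}$ is $L_{-3/2}(\g)$--irreducible; being irreducible with top $U^{\g}_0(s)$, it equals $L^{\g}_{-3/2}(U_0(s))=L_{-3/2}(\g).E_{-1}^{(s)}$, which yields the stated realizations and the displayed basis of the top.

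The step I expect to be the main obstacle is (c): once $W$ is known to be an $L_{-3/2}(\g)$--module whose top splits as two $\g$--modules, one still must prove that the two generated submodules are irreducible and meet only in $0$. The leverage is precisely the duality of Theorem \ref{isom-osp-N4-direct}: $W$ carries simultaneously compatible $\mathcal V^{(2)}$-- and $L_{-3/2}(\g)$--structures sharing the same $L_{-3/2}(\g_0)$--action (cf. Table \ref{tab:table1}), so the already established $\mathcal V^{(2)}$--decomposition of Proposition \ref{N=4-rev}(3), read through the mutual expressibility of the generators, forces the $L_{-3/2}(\g)$--decomposition and the irreducibility of each summand.
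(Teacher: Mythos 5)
Your proposal reaches the right conclusion, and steps (a) and (b) match the paper: the paper's proof likewise begins with the direct computation of the zero modes $e(0),h(0),f(0),x(0),y(0)$ on the vectors $E_i, E_{i-1/2}$ to identify the top component with $U^{\g}_0(r)$ (the factorization through the simple quotient, your step (b), is left implicit there). Where you genuinely diverge is the irreducibility argument. The paper does \emph{not} transport irreducibility through the $\mathcal V^{(2)}\leftrightarrow L_{-3/2}(\g)$ duality; instead it argues that a proper submodule $Z$ of the cyclic module $W=L_{-3/2}(\g).e^{\alpha/2-\mu-(r+1/4)c}$ would intersect $W_{top}$ trivially, then invokes Lemma \ref{lemma-key} to force $Z$ to contain vectors of $L_{sug}(0)$--weight $0$ or $-1/2$, and finally uses the explicit character of $L^{N=4}(U_{-1,r})$ from Proposition \ref{N=4-rev} (only the graded dimensions, via the isomorphism with $W$ as a $\widehat{\g_0}$--module) to see that all such vectors already lie in $W_{top}$ --- a contradiction. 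This character-theoretic route buys you something concrete: it only needs the $\widehat{\g_0}$--module identification, which is immediate since $L_{-3/2}(\g_0)$ sits inside both algebras with the same realization.

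Your route instead rests on the claim that on the twisted module $F_{tw}\otimes\Pi^{1/2}_{-1}(r+\tfrac14)$ an $L_{-3/2}(\g)$--submodule is the same as a $\mathcal V^{(2)}$--submodule. That is the one step you should not assert without proof: Theorem \ref{isom-osp-N4-direct} and the field identities (\ref{embed-n4-1})--(\ref{embed-n4-2}), (\ref{embed-osp-x})--(\ref{embed-osp-y}) are established on the vacuum modules, where $\Phi_i(z)$ has half-integer modes; on $F_{tw}$ the modes are integral, so the operators $\varphi^{\pm}(z)$ and the mutual expressibility of $x,y$ and $G^{\pm},\overline G^{\pm}$ must be re-derived in the twisted sector (the OPE computations of Lemma \ref{comm-vir} do carry over, so this is a verification rather than an obstruction, but it is not free). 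You would also need to check that the $\mathcal V^{(2)}$--structure so obtained on $W^{(s)}$ agrees with the one used in Proposition \ref{N=4-rev}(3). If you supply that verification, your argument is a legitimate and arguably more conceptual alternative --- it makes the duality do the work uniformly, and your parity splitting of $W$ is a cleaner justification of the direct-sum decomposition than the paper offers. As written, though, the load-bearing identification of submodule lattices on the twisted module is a gap that the paper's Lemma \ref{lemma-key} plus character argument is specifically designed to avoid.
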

 \begin{proof} Let $W = L_{-3/2}(\g). e^{\alpha/ 2 - \mu -  (r +  i + 5/4) c}$.
   By direct calculation we get:
 \bea
   &e(0)  E_{i} = E_{i-1},  &e(0) E_{i-1/2} = E_{i-3/2} \nonumber \\
    &h(0)  E_i = -(2 i + 2r +1) E_i,    &h(0)  E_{i-1/2} =- (2i + 2r ) E_{i-1/2} \nonumber \\
    &f(0)  E_i = - (r+i+1) ^2 E_{i+1},  &f(0) E_{i-1/2} =- (r+i + 1) (r+i) E_{i+1/2} \nonumber \\
  &x(0)  E_i = E_{i-1/2},  &y(0)  E_i = - (r+i+1) E_{i+1/2}  \nonumber \\
  &x(0)  E_{i-1/2} = E_{i-1} ,  &y(0)  E_{i-1/2} = - (r+i ) E_{i} \nonumber 
 \eea
 So $W$ is a cyclic ${\Z}_{\ge 0}$--graded  $L_{-3/2}(\g)$--module, whose top component  $W_{top}$ is isomorphic to the irreducible $\g$--module $U^{\g} _{0} (r)$.  If $W$ is not irreducible, then there is a proper submodule $Z \subsetneqq W$ which intersects $W_{top}$ trivially. By using Lemma \ref{lemma-key}, we conclude that $Z$ has vectors of $L_{sug}(0)$--conformal weights $0$ or $-1/2$. But since $W \cong L^{N=4}(U_{-1,r})$ as $\widehat{\g_0} $--module, using character formula for $L^{N=4}(U_{-1,r})$ (cf. Proposition \ref{N=4-rev}) we see that  all vectors of conformal weights $0$ or $-1/2$ should correspond to $ U_{-1,r} \oplus U_{0,r} \cong W_{top}$ which intersect  $Z$ trivially. A contradiction. Therefore $W$ is irreducible.
 \end{proof}

 \section{Generalization to logarithmic vertex algebras}
\label{generalizacija}
 
 In this section we will see that correspondences
 $$ \mathcal{SF}(1)	\longleftrightarrow \overline F, \quad \mathcal V^{(2)} 	\longleftrightarrow  L_{-3/2} (osp(1 \vert 2)) $$
 can be extended to a larger family of logarithmic vertex algebras.
 \vskip 5mm

 Consider the generalized lattice vertex algebra $V_{\widetilde L}$ associated to the lattice
 
 $$ \widetilde L = {\Z} \gamma  /2, \quad \langle \gamma, \gamma \rangle = 2p . $$
 Recall \cite{AM-doublet} that the doublet vertex algebra is defined as:
   $$\mathcal A^{(p)} = \mbox{Ker}_{V_{\widetilde L} } \widetilde Q, $$
   where $$\widetilde Q = e^{-\gamma/p}_0 = \int e^{-\gamma/p} (z) dz. $$
   Doublet algebra  has the Virasoro vector
   $$ \omega^{(p)} = \frac{1}{4p} \gamma(-1) ^2 +\frac{p-1}{2p} \gamma(-2)$$
   of central charge $c_{p,1} = 1- \frac{6 (p-1)^2}{p}$ and derivation
   $Q = e^{\gamma}_0 = \int e^{\gamma} (z) dz$. Let
   $$ L_{st} (z) = Y( \omega^{(p)}, z) = \sum_{n \in {\Z}} L_{st} (n) z^{-n-2}. $$

 Define a new Virasoro vector of central charge $c_{p,1}$:
 $$\omega= \omega^{(p)} _{new}= \omega^{(p)} +   e^{\gamma} = \frac{1}{4p} \gamma(-1) ^2 +\frac{p-1}{2p} \gamma(-2) + e^{\gamma}. $$
 Let $ L(n)  = \omega_{n+1}$, 
 and
 \bea v = e^{-\gamma /p}  - \frac{1}{p-1}  e^{\gamma-\gamma/p} \label{screening-new-1} \eea
 Then we have:
 \bea  L  (0) v &=& e^{-\gamma  /p} + e^{\gamma - \gamma /p} \nonumber \\
 L   (-1) v & =&  D v  + \gamma(-1) e^{\gamma -\gamma /p}  =  D v + \frac{p}{p-1} D e^{\gamma-\gamma/p} \nonumber \\ &=& D (e^{-\gamma /p} +  e^{\gamma - \gamma /p}) \nonumber
 \eea
 This implies that
  $$\widetilde Q _{new} = e^{-\gamma /p} _0  - \frac{1}{p-1} e^{\gamma-\gamma/p} _0 $$
 is a screening operator.
 Define new generalized vertex algebra
 $$\mathcal A^{(p)} _{new} = \mbox{Ker}_{V_{\widetilde L} } \widetilde Q _{new}. $$
 
 In the case $p=2$, we get $\mathcal A^{(p)} = \mathcal{SF}(1)$ and $\mathcal A^{(p)} _{new} = \overline F$ and we proved that $\mathcal{SF}(1)$  and  $\overline F$  are isomorphic as $L^{Vir}(-2,0)$--modules.  
 \begin{theorem} \label{conj-1}$\mathcal A^{(p)}$ and $\mathcal A^{(p)} _{new}$ are isomorphic as $L(c_{p,1}, 0)$--modules and
 \bea \mathcal A^{(p)} _{new} = \bigoplus_{n =0} ^{\infty} (n+1) L(c_{p,1}, \frac{n (np + 2p-2))}{4}).\label{dec-ap-nova-2} \eea The isomorphism is given by
 $$ \Omega\vert _{\mathcal A^{(p)}} : \mathcal A^{(p)} \rightarrow \mathcal A^{(p)} _{new}$$
 where $\Omega$ as an operator $V_{\widetilde L}$ given by 
  $$ \Omega = \exp[e^{\gamma} _1] = \sum_{n = 0} ^{\infty} \frac{ (e^{\gamma} _1) ^n}{n!}. $$
 \end{theorem}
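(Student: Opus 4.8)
The plan is to exhibit $\Omega=\exp[e^{\gamma}_1]$ as a well-defined invertible operator on $V_{\widetilde L}$ that simultaneously conjugates the standard Virasoro action $L_{st}(n)$ into the new one $L(n)$ and the screening $\widetilde Q$ into $\widetilde Q_{new}$; the decomposition will then follow by transporting the known Virasoro decomposition of the doublet algebra through $\Omega$.

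First I would check that $\Omega$ makes sense. Since $e^{\gamma}$ is a primary vector of conformal weight $1$ for $\omega^{(p)}$, we have $[L_{st}(0),e^{\gamma}_1]=-e^{\gamma}_1$, so $e^{\gamma}_1$ lowers $L_{st}(0)$-weight by one. On $V_{\widetilde L}$ the weight of $e^{\frac{k}{2}\gamma}$ equals $\frac{k^2}{8}-\frac{(p-1)k}{2}$, an upward parabola in $k$, and Heisenberg descendants only add nonnegative integers; hence the $L_{st}(0)$-spectrum of $V_{\widetilde L}$ is bounded below. Consequently $e^{\gamma}_1$ is locally nilpotent, $\Omega=\sum_{n\ge 0}\frac{1}{n!}(e^{\gamma}_1)^n$ converges on every vector, and $\Omega$ is invertible with $\Omega^{-1}=\exp[-e^{\gamma}_1]$. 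The same weight bound holds on the screening target $V_{\widetilde L-\gamma/p}$, so $\Omega$ is defined there as well.

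Next I would establish the two conjugation identities, each of which truncates at first order. For the Virasoro, primariness of $e^{\gamma}$ gives $[e^{\gamma}_1,L_{st}(n)]=(e^{\gamma})_{n+1}$; and because $\langle\gamma,\gamma\rangle=2p>0$ the field $e^{\gamma}(z)$ is mutually local with itself with a zero of order $2p$, so $e^{\gamma}_j e^{\gamma}=0$ for all $j\ge 0$. Hence $[e^{\gamma}_1,(e^{\gamma})_{n+1}]=0$ and the $\mathrm{ad}$-series collapses to
$$\Omega\, L_{st}(n)\,\Omega^{-1}=L_{st}(n)+(e^{\gamma})_{n+1}=L(n).$$
For the screening I would compute $[e^{\gamma}_1,\widetilde Q]$ from the operator product $Y(e^{\gamma},z)e^{-\gamma/p}$, whose leading exponent is $\langle\gamma,-\gamma/p\rangle=-2$, giving exactly two singular coefficients ($j=0,1$). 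Extracting them and using $\gamma(-1)e^{\gamma-\gamma/p}=\frac{p}{p-1}D\,e^{\gamma-\gamma/p}$ together with $(Da)_1=-a_0$ yields, after fixing the cocycle normalization,
$$[e^{\gamma}_1,\widetilde Q]=\Bigl(1-\tfrac{p}{p-1}\Bigr)e^{\gamma-\gamma/p}_0=-\tfrac{1}{p-1}\,e^{\gamma-\gamma/p}_0.$$
Since $\langle\gamma,\gamma-\gamma/p\rangle=2(p-1)>0$, the next bracket $[e^{\gamma}_1,e^{\gamma-\gamma/p}_0]$ vanishes, so the series truncates and $\Omega\,\widetilde Q\,\Omega^{-1}=\widetilde Q-\frac{1}{p-1}e^{\gamma-\gamma/p}_0=\widetilde Q_{new}$. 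I expect the main obstacle to be precisely this computation: reproducing the coefficient $-\frac{1}{p-1}$ on the nose and pinning down the cocycle sign so the identity holds exactly.

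Finally I would assemble the conclusion. From $\Omega\,\widetilde Q=\widetilde Q_{new}\,\Omega$ and the invertibility of $\Omega$ it follows that $\Omega$ restricts to a linear bijection $\mathcal A^{(p)}=\mathrm{Ker}\,\widetilde Q\to\mathrm{Ker}\,\widetilde Q_{new}=\mathcal A^{(p)}_{new}$, and from $\Omega\,L_{st}(n)=L(n)\,\Omega$ this bijection intertwines the $L_{st}$-action on $\mathcal A^{(p)}$ with the $L$-action on $\mathcal A^{(p)}_{new}$; hence it is an isomorphism of $L(c_{p,1},0)$-modules. The decomposition (\ref{dec-ap-nova-2}) is then the known Virasoro decomposition of the doublet algebra $\mathcal A^{(p)}$ carried across by $\Omega$, with highest weights $h_n=\frac{n(np+2p-2)}{4}$, which for $p=2$ reduces to $\frac{n(n+1)}{2}$ and recovers the decomposition of $\mathcal{SF}(1)$ and $\overline F$.
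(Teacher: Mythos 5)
Your proposal is correct and follows essentially the same route as the paper: the key commutator $[e^{\gamma}_1,\widetilde Q]=-\tfrac{1}{p-1}e^{\gamma-\gamma/p}_0$ with truncation of the adjoint series (the paper phrases this as an induction on powers of $e^{\gamma}_1$), the intertwining $\Omega L_{st}(n)=L(n)\Omega$, and transport of the known decomposition of $\mathcal A^{(p)}$. Your added justifications of local nilpotency and of the Virasoro conjugation are welcome details the paper leaves implicit; only note that the weight of $e^{\frac{k}{2}\gamma}$ is $\tfrac{pk^2}{4}-\tfrac{(p-1)k}{2}$ rather than $\tfrac{k^2}{8}-\tfrac{(p-1)k}{2}$, which does not affect the boundedness-below argument.
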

 \begin{proof}
 Note that $\Omega$ is invertible and  $\Omega^{-1} =\exp[-e^{\gamma} _1]$.
 Since
 $$ [\widetilde Q, e^{\gamma} _1] = -(\tfrac{\gamma (-1)}{p} e^{\gamma - \tfrac{\gamma}{p}})_1 =-\tfrac{1}{p-1} (De^{\gamma - \tfrac{\gamma}{p}})_1 =\tfrac{1}{p-1}  e^{\gamma - \tfrac{\gamma}{p}} _0, $$
 we get for $n \ge 1$:
 $$ \frac{ (e^{\gamma} _1 )^n }{n!} \widetilde Q = \widetilde Q \frac{ (e^{\gamma} )_1 ^n }{n!}  -\frac{1}{p-1}  e^{\gamma - \tfrac{\gamma}{p}} _0 \frac{(  e^{\gamma} _1 ) ^{n-1} }{(n-1)!}. $$
This implies
\bea  \Omega \widetilde Q &=& \widetilde Q_{new} \Omega \nonumber \\
 \Omega^{-1}  \widetilde Q_{new} &=& \widetilde Q  \Omega^{-1} \nonumber \eea
 Since
  $$L  (n) \Omega = \Omega L_{st} (n), $$
  we get that 
  $$ \Omega \vert _{\mathcal A^{(p)} } : \mathcal A^{(p)} \rightarrow \mathcal A^{(p)} _{new}$$
  is an isomorphism of $L^{Vir}(c_{p,1}, 0)$--modules. The proof of the decomposition (\ref{dec-ap-nova-2}) follows from the decomposition of  $\mathcal A^{(p)}$ as a direct sum of  irreducible $L^{Vir}(c_{p,1}, 0)$--modules (cf. \cite{AM08}, \cite{AM-doublet}, \cite{ACGY}).
 \end{proof}

Recall that the $\Z_2$--orbifold of $\mathcal A^{(p)}$ is the triplet vertex algebra $\mathcal W^{(p)} = \mbox{Ker}_{ V_{\Z \gamma}} \widetilde Q$ (cf. \cite{AM08}) generated by
$$ \omega^{(p)}, \  F= e^{-\gamma}, \  H  = Q F, \  E = Q^2 F. $$
Define:
$$\mathcal W^{(p)} _{new}  = \mbox{Ker}_{ V_{\Z \gamma}} \widetilde Q_{new}. $$

\begin{theorem} \label{generators-general} 
$\mathcal W^{(p)} _{new} $ is generated by  
$$ \omega^{(p)}_{new}, F_{new} = \Omega F,  H_{new} = \Omega H, E_{new} =\Omega E. $$
\end{theorem}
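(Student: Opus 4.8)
The plan is to mimic the $p=2$ argument from Proposition~\ref{isomorphism-fields-1} and the surrounding discussion, now in the orbifold setting. First I would recall that Theorem~\ref{conj-1} already establishes that the operator $\Omega = \exp[e^{\gamma}_1]$ is an invertible operator on $V_{\widetilde L}$ which intertwines the two screenings, in the sense that $\Omega \widetilde Q = \widetilde Q_{new}\,\Omega$, and hence restricts to an isomorphism $\mathcal A^{(p)} \to \mathcal A^{(p)}_{new}$. Since $\mathcal W^{(p)} = \mathrm{Ker}_{V_{\Z\gamma}} \widetilde Q$ is the $\Z_2$--orbifold sitting inside $\mathcal A^{(p)}$, and $\mathcal W^{(p)}_{new} = \mathrm{Ker}_{V_{\Z\gamma}} \widetilde Q_{new}$ sits inside $\mathcal A^{(p)}_{new}$, the first step is to verify that $\Omega$ preserves the integer-lattice orbifold, i.e. that $\Omega$ restricts to a linear isomorphism $\mathcal W^{(p)} \to \mathcal W^{(p)}_{new}$. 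This is essentially immediate: $e^{\gamma}_1$ raises the lattice-degree by $\gamma \in \Z\gamma$, so $\Omega$ preserves $V_{\Z\gamma}$, and because $\Omega \widetilde Q = \widetilde Q_{new}\Omega$ it carries $\mathrm{Ker}\,\widetilde Q$ into $\mathrm{Ker}\,\widetilde Q_{new}$ inside $V_{\Z\gamma}$, with the inverse $\Omega^{-1} = \exp[-e^{\gamma}_1]$ giving the reverse inclusion.

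Next I would use that $\mathcal W^{(p)}$ is \emph{generated} (as a vertex algebra) by the four vectors $\omega^{(p)}, F = e^{-\gamma}, H = QF, E = Q^2 F$. Because $\Omega$ is an invertible linear map on $V_{\widetilde L}$ that sends the strong generators of $\mathcal W^{(p)}$ to $\omega^{(p)}_{new}, F_{new} = \Omega F, H_{new} = \Omega H, E_{new} = \Omega E$ respectively, the image $\Omega(\mathcal W^{(p)}) = \mathcal W^{(p)}_{new}$ is spanned by iterated products of these images. The key technical point is that $\Omega$ is not a vertex algebra homomorphism in the ordinary sense (the two algebras carry different conformal structures), but it \emph{does} intertwine the relevant field operators: from Theorem~\ref{conj-1} we have $L(n)\Omega = \Omega L_{st}(n)$, and I would establish an analogous intertwining relation $\Omega\, a_{n} = (\Omega a)^{\mathrm{new}}_{n}\, \Omega$ for the state-field correspondence, where the mode $(\cdot)^{\mathrm{new}}_n$ is computed in the vertex algebra structure of $\mathcal A^{(p)}_{new}$. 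Concretely, since $\Omega$ conjugates the screening and the Virasoro action, a standard commutator computation shows that for any $a \in \mathcal A^{(p)}$ the modes satisfy the conjugation identity, so that products of generators map to products of the images; this is what promotes the statement from a linear-isomorphism statement to the generation statement claimed.

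Therefore the argument is: the four vectors $\omega^{(p)}_{new}, F_{new}, H_{new}, E_{new}$ lie in $\mathcal W^{(p)}_{new}$, and every element of $\mathcal W^{(p)}_{new} = \Omega(\mathcal W^{(p)})$ is obtained by applying products of modes of these four vectors to the vacuum, precisely because $\Omega$ conjugates all the mode operators and $\mathcal W^{(p)}$ is strongly generated by $\omega^{(p)}, F, H, E$. I would record explicitly that $\Omega \mathbf{1} = \mathbf{1}$ (since $e^{\gamma}_1 \mathbf{1} = 0$), which guarantees the cyclic vector is preserved. The main obstacle I anticipate is \emph{not} the generation bookkeeping but making the intertwining relation between $\Omega$ and the modes precise: one must check that $\Omega (a_n b) = (\Omega a)^{\mathrm{new}}_n (\Omega b)$ for the generators, i.e. that the deformation by $e^{\gamma}$ of the conformal vector is compatible with the normally-ordered products defining $H$ and $E$ from $F$ via the derivation $Q = e^{\gamma}_0$. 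I would handle this by noting that $Q$ commutes with $\Omega$ up to a controlled correction (since $[e^{\gamma}_0, e^{\gamma}_1]$ involves only $e^{2\gamma}$-type terms that act trivially modulo the kernel structure), so that $\Omega H = \Omega Q F$ and $\Omega E = \Omega Q^2 F$ are genuinely expressible through $Q$ acting on $F_{new}$, confirming that the listed four vectors indeed generate the full orbifold $\mathcal W^{(p)}_{new}$.
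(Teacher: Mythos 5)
Your opening step (that $\Omega$ preserves $V_{\Z\gamma}$ and, because $\Omega\widetilde Q=\widetilde Q_{new}\Omega$, restricts to a linear bijection $\mathcal W^{(p)}\to\mathcal W^{(p)}_{new}$) is correct and is also where the paper starts. The gap is the central claim that $\Omega$ conjugates the modes, $\Omega\,a_n=(\Omega a)_n\,\Omega$. Since $\mathcal A^{(p)}_{new}$ and $\mathcal W^{(p)}_{new}$ carry the vertex algebra structure restricted from $V_{\widetilde L}$, this identity together with $\Omega\mathbf{1}=\mathbf{1}$ would make $\Omega$ a vertex algebra isomorphism $\mathcal W^{(p)}\to\mathcal W^{(p)}_{new}$ (and $\mathcal A^{(p)}\to\mathcal A^{(p)}_{new}$), which is exactly what the paper rules out: for $p=2$ it would say $\mathcal{SF}(1)\cong\overline F$ as vertex superalgebras, contradicting the Remark following Theorem \ref{sing-vekt-1}. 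Concretely, the commutator formula gives $[e^{\gamma}_1,a_n]=(e^{\gamma}_0a)_{n+1}+(e^{\gamma}_1a)_n$, so the obstruction to the conjugation identity is the term $(Qa)_{n+1}$ with $Q=e^{\gamma}_0$; it is nonzero already for $a=F=e^{-\gamma}$, where $QF=H\neq 0$. Your observation that $[e^{\gamma}_0,e^{\gamma}_1]=0$ is true but irrelevant to this obstruction. Hence ``products of generators map to products of the images'' does not follow, and generation cannot be transported through $\Omega$ in the way you propose.

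The paper's proof works around precisely this failure. It uses that $\mathcal W^{(p)}_{new}$ is generated, as a Virasoro module, by the singular vectors $\Omega Q^je^{-n\gamma}$, writes $\Omega Q^je^{-n\gamma}=Q^je^{-n\gamma}+z'_j$ with $Q^{2n-j}z'_j=0$, and then runs an induction on $n$: the explicit relations from \cite[Proposition 1.3]{AM08}, namely $F_{\ell}e^{-n\gamma}=\nu_1e^{-(n+1)\gamma}$, $E_{\ell}Q^{2n}e^{-n\gamma}=\nu_2Q^{2n+2}e^{-(n+1)\gamma}$ and $H_{\ell}Q^je^{-n\gamma}=C_jQ^{j+1}e^{-(n+1)\gamma}+v'_j$ with $\ell=-2np-1$, are transferred to $(F_{new})_{\ell}$, $(E_{new})_{\ell}$, $(H_{new})_{\ell}$ acting on the $\Omega Q^je^{-n\gamma}$ only \emph{up to error terms} $v_j$ controlled by membership in $\mathrm{Ker}_{V_{\Z\gamma}}Q^{2n+1-j}$, and it is this control of the errors that lets one conclude that the singular vectors at level $n+1$ lie in the subalgebra generated by $\omega^{(p)}_{new},F_{new},H_{new},E_{new}$. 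To repair your argument you would have to replace the false conjugation identity by such an ``intertwining up to lower-order terms'' statement and still carry out the induction, at which point you have essentially reconstructed the paper's proof.
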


\begin{proof}
The proof is similar to that of \cite[Proposition 1.3]{AM08}.  We know that
$\mathcal W^{(p)} _{new}$ is as a module for the Virasoro algebra isomorphic to $\mathcal W^{(p)}$ and it is generated by the following singular vectors:
$$ \Omega Q^j e^{-n \gamma} =   Q^j e^{-n \gamma}  +  Q^j   e^{\gamma}_1 e^{-n \gamma} + \cdots ,$$
for  $n \in {\Z}_{\ge 0}$, $0 \le j \le 2n$.
This implies that
$$  \Omega Q^j e^{-n \gamma} = Q^j e^{-n \gamma} + z'_{j},   \quad z'_{j} \in V_{\Z \gamma}, \ Q^{2n-j} z'_j = 0. $$
Let $Z_n$ be the Virasoro module generated by  singular vectors
$$\Omega Q^j e^{-m \gamma},  \ m \le n, j \ge 0 . $$
Then
$$W^{(p)} _{new} = \bigcup_{n \in {\Z} } Z_n. $$
Let $U$ be the vertex subalgebra of $\mathcal W^{(p)} _{new}$ generated by $F_{new}, H_{new}, E_{new}, \omega^{(p)} _{new}$.  

We will   show by induction  that $Z_n \subset U$ for every $n \in {\Z}_{>0}$. For $n=1$, the claim holds. Assume that $Z_n \subset U$. Set $\ell = -2 n p -1$.    We use the following relations in $\mathcal W^{(p)}$  proved in  \cite[Proposition 1.3]{AM08}:
$$ F_{\ell} e^{-n \gamma} = \nu_1  e^{-(n+1) \gamma},\quad  E_{\ell} Q^{2n}  e^{-n \gamma}  = \nu_2 Q^{2n+2}  e^{-(n+1) \gamma},$$
$$  H_{\ell}    Q^j  e^{-n\gamma} = C_j   Q^{j+1} e^{-(n+1) \gamma} +v'_{j}, \ \   v'_j \in \mbox{Ker}_{V_{\Z\gamma}} Q^{2n+1-j},$$
$j=1, \cdots, 2n$, $\nu_1, \nu_2, C_j \ne 0$. These relations imply that in $\mathcal W^{(p)} _{new}$ we have:
\bea  (F_{new} )_{\ell}     \Omega e^{-n \gamma} &=& \nu_1  \Omega   e^{-(n+1) \gamma} + v_0, 
 v_0 \in \mbox{Ker} _{V_{\Z \gamma}} Q^{2n+2} \nonumber \\
 (E_{new} )_{\ell}  \Omega Q^{2n} e^{-n \gamma}   &=& \nu_2 \Omega Q^{2n+2}  e^{-(n+1) \gamma}, \nonumber \\
   (H_{new} )_{\ell}   \Omega Q^j  e^{-n\alpha} &=& C_j  \Omega Q^{j+1} e^{-(n+1) \gamma} +v_{j}, \ \   v_j \in \mbox{Ker}_{V_{\Z \gamma}}  Q^{2n+1-j}.\nonumber \eea
 We conclude that $\Omega Q^{j} e^{-(n+1) \gamma} \in Z_n$ for $j=0, \dots, 2n+2$. These relations imply that $Z_{n+1} \in U$. The claim now follows by induction. Therefore $U = \mathcal W^{(p)}_{new}$.
\end{proof}

\vskip 5mm
 Next we consider the vertex algebra $\mathcal A^{(p)} _{new} \otimes \Pi(0)^{1/2}$. 
 Then we have  the vertex algebra homomorphism  $L_{-2+ \frac{1}{p}} (\mathfrak{sl}(2))  \rightarrow \mathcal A^{(p)} _{new} \otimes \Pi(0)^{1/2}$ with screening operator
 $ S^{(p)}= s^{(p)}_0$, where $s^{(p)} = e^{\frac{\gamma}{2p} + \nu}$.
 We define:
 $$ \mathcal V^{(p)}_{new} = \mbox{Ker}_{ \mathcal A^{(p)} _{new} \otimes \Pi(0)^{1/2}   } S^{(p)}.$$
 In the case $p=2$ we already proved that $ \mathcal V^{(2)}_{new} = L_{-3/2} (\mathfrak{osp}(1 \vert 2))$ and that $L_{-3/2} (\mathfrak{osp}(1 \vert 2))$ is isomorphic to $\mathcal V^{(2)}$ as $L_{-3/2} (\mathfrak{sl}(2))$--module. Next results extends this for $p >2$.
 
 \begin{theorem}    \label{general-vp-new}   Assume that $p \ge 2$. We have:
 $$\overline \Omega= \Omega \otimes \mbox{Id} \ \vert _{\mathcal V^{(p)} } : 
\mathcal V^{(p)} \rightarrow \mathcal V^{(p)}_{new}$$
is an isomorphism of $L_{-2 + \tfrac{1}{p}}(\mathfrak{sl}(2))$--modules. 
 \end{theorem}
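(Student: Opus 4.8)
The plan is to transport the Virasoro isomorphism $\Omega\colon \mathcal A^{(p)}\to\mathcal A^{(p)}_{new}$ of Theorem \ref{conj-1} through the two screening realizations, so that everything reduces to two facts about $\Omega$: that it commutes with the screening $S^{(p)}$, and that it conjugates the old $\widehat{\mathfrak{sl}}(2)$--currents into the new ones. First I would record that, by Theorem \ref{conj-1}, $\Omega=\exp[e^{\gamma}_1]$ is an invertible operator on $V_{\widetilde L}$ whose restriction $\Omega\colon\mathcal A^{(p)}\to\mathcal A^{(p)}_{new}$ is a linear bijection with inverse $\exp[-e^{\gamma}_1]$. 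Tensoring with the identity on $\Pi(0)^{1/2}$, the operator $\overline\Omega=\Omega\otimes\mathrm{Id}$ is a linear isomorphism $\mathcal A^{(p)}\otimes\Pi(0)^{1/2}\to\mathcal A^{(p)}_{new}\otimes\Pi(0)^{1/2}$, with inverse $\exp[-e^{\gamma}_1]\otimes\mathrm{Id}$.

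The key step is to commute $\overline\Omega$ past $S^{(p)}=(e^{\gamma/(2p)+\nu})_0$. Since $e^{\gamma}_1$ acts only on the $V_{\widetilde L}$--factor, it suffices to examine the operator product of $e^{\gamma}$ with the screening field inside the lattice vertex algebra. Here $\langle\gamma,\gamma/(2p)+\nu\rangle=\tfrac{1}{2p}\langle\gamma,\gamma\rangle=1\ge 0$ (using $\langle\gamma,\nu\rangle=0$), so $Y(e^{\gamma},z)\,e^{\gamma/(2p)+\nu}$ has no singular part; equivalently $e^{\gamma}_j\,e^{\gamma/(2p)+\nu}=0$ for all $j\ge 0$. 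The commutator formula then gives
$$[e^{\gamma}_1,S^{(p)}]=\sum_{j\ge 0}\binom{1}{j}\big(e^{\gamma}_j\,e^{\gamma/(2p)+\nu}\big)_{1-j}=0,$$
and hence $\overline\Omega=\exp[e^{\gamma}_1\otimes\mathrm{Id}]$ commutes with $S^{(p)}$. Consequently, for $v\in\mathcal V^{(p)}=\mbox{Ker}_{\mathcal A^{(p)}\otimes\Pi(0)^{1/2}}S^{(p)}$ we have $S^{(p)}\overline\Omega v=\overline\Omega S^{(p)}v=0$, so $\overline\Omega v\in\mathcal V^{(p)}_{new}=\mbox{Ker}_{\mathcal A^{(p)}_{new}\otimes\Pi(0)^{1/2}}S^{(p)}$; running the same argument with $\overline\Omega^{-1}$ shows $\overline\Omega$ restricts to a linear bijection $\mathcal V^{(p)}\to\mathcal V^{(p)}_{new}$.

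It remains to verify that $\overline\Omega$ is $L_{-2+1/p}(\mathfrak{sl}(2))$--linear, i.e.\ that it conjugates the modes of the old currents $e(z),h(z),f(z)$ into those of the new ones. In the realization of \cite{A-TG}, \cite{ACGY} the currents $e(z)$ and $h(z)$ are built purely from the $\Pi(0)^{1/2}$--factor (for $p=2$ they are $e^{c}$ and $2\mu(-1)$), on which $\overline\Omega$ acts as the identity, so $\overline\Omega$ commutes with $e(n)$ and $h(n)$ automatically. The current $f(z)$ is the only one involving the Virasoro vector of $\mathcal A^{(p)}$: schematically $f=\big[(k+2)\omega^{(p)}-\cdots\big]e^{-c}$ with the omitted terms lying in the $\Pi(0)^{1/2}$--factor, while the ``new'' realization uses the identical expression with $\omega^{(p)}$ replaced by $\omega^{(p)}_{new}$. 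Since $\overline\Omega$ fixes all $\Pi(0)^{1/2}$--data and, by the relation $L(n)\Omega=\Omega L_{st}(n)$ of Theorem \ref{conj-1}, conjugates the old Virasoro field $L_{st}(z)=Y(\omega^{(p)},z)$ into the new one $L(z)=Y(\omega^{(p)}_{new},z)$, it sends the old $f$--field to the new $f$--field, so $\overline\Omega f^{old}(n)=f^{new}(n)\overline\Omega$. As $e,f,h$ generate $L_{-2+1/p}(\mathfrak{sl}(2))$, this proves $\overline\Omega$ is an isomorphism of $L_{-2+1/p}(\mathfrak{sl}(2))$--modules.

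I expect the main obstacle to be this last step, and specifically the $f$--current: because $f$ is a normally ordered expression mixing the lattice/Virasoro factor $\mathcal A^{(p)}$ with the $\Pi(0)^{1/2}$--factor, and because $\Omega$ is only a Virasoro--module map (not a vertex algebra homomorphism) between $\mathcal A^{(p)}$ and $\mathcal A^{(p)}_{new}$, one must argue at the level of fields that conjugation by $\overline\Omega$ turns $L_{st}(z)$ into $L(z)$ while leaving the $\Pi(0)^{1/2}$--ingredients and $e^{-c}(z)$ untouched, and that this is exactly the modification $\omega^{(p)}\mapsto\omega^{(p)}_{new}=\omega^{(p)}+e^{\gamma}$ built into $\mathcal V^{(p)}_{new}$. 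The screening commutation and the identification of kernels, by contrast, are routine once the pairing $\langle\gamma,\gamma/(2p)\rangle=1$ is noted.
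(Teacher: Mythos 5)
Your proposal follows the same route as the paper: the paper's (very terse) proof consists precisely of the two facts you establish, namely that $\overline\Omega$ commutes with $S^{(p)}$ (hence restricts to a bijection between the two kernels) and that $x_{new}(n)\overline\Omega=\overline\Omega\,x(n)$ for $x\in\mathfrak{sl}(2)$. Your computation $\langle\gamma,\gamma/(2p)+\nu\rangle=1$ forcing $[e^{\gamma}_1,S^{(p)}]=0$, and the observation that only the $f$--current sees the replacement $\omega^{(p)}\mapsto\omega^{(p)}_{new}$ via $L(n)\Omega=\Omega L_{st}(n)$, correctly supply the details the paper leaves implicit.
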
 
 \begin{proof}
 First we notice that  $\overline \Omega$ commutes with the action of $ S^{(p)}$ and therefore it defines a linear bijection $\mathcal V^{(p)} \rightarrow \mathcal V^{(p)}_{new}$. The claim now follows from the relation:
 $$ x_{new} (n) \overline \Omega = \overline \Omega x  (n) \quad  x\in \mathfrak{sl}(2).  $$
 \end{proof}

\section{The structure of the parafermion vertex algebra $N_{-\tfrac{3}{2}} (\mathfrak{osp}(1 \vert 2))$ }

 First consider the parafermion vertex algebras  of $\mathcal V^{(p)}$ and $\mathcal V^{(p)}_{new}$. Let $M_h(1)$ be the Heisenberg vertex algebra generated by $h$. Let
\bea \mathcal N^{(p)} &=& \mbox{Com}(M_h(1), \mathcal V^{(p)} ) = \{ v \in  \mathcal V^{(p)} \vert h(n) v = 0, n \ge 0 \}, \nonumber \\  \mathcal N^{(p)} _{new} &=&  \mbox{Com}(M_h(1),  \mathcal V^{(p)} _{new} ) =
  \{ v \in  \mathcal V^{(p)} _{new} \vert h(n) v = 0, n \ge 0 \}. \nonumber \eea
 
 Since the operator $ \overline \Omega =\Omega \otimes \mbox{Id}$ commutes with operators $h(n)$, $n \in {\Z}$,  Theorem \ref{general-vp-new} directly implies: 
 \begin{cor}    \label{general-vp-new-paraferm}   We have:
 $$\overline \Omega   \vert _{\mathcal N^{(p)} } : 
\mathcal N^{(p)} \rightarrow \mathcal N^{(p)}_{new}$$
is an isomorphism of $N_{-2 + \tfrac{1}{p}}(\g_0)$--modules.  In particular, as $N_{-2 + \tfrac{1}{p}}(\g_0)$--modules we have
$$\mathcal N^{(p)}   \cong \mathcal N^{(p)}_{new} \cong  \bigoplus_{n =0}  ^{\infty} (2 n+1) N_{-2 + \frac{1}{p}} ^{\g_0} (2 n \omega_1). $$
 \end{cor}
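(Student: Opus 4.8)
The plan is to deduce both assertions almost immediately from Theorem \ref{general-vp-new}, since passing to the Heisenberg commutant is compatible with the isomorphism $\overline\Omega$. Recall that $N_{-2+\frac{1}{p}}(\g_0) = \mbox{Com}(M_h(1), L_{-2+\frac{1}{p}}(\mathfrak{sl}(2)))$ is the parafermion subalgebra sitting inside the common affine subalgebra $L_{-2+\frac{1}{p}}(\mathfrak{sl}(2))$ of both $\mathcal V^{(p)}$ and $\mathcal V^{(p)}_{new}$, and that by definition $\mathcal N^{(p)}$ and $\mathcal N^{(p)}_{new}$ are the Heisenberg commutants $\{v : h(n)v = 0,\ n\ge 0\}$ inside $\mathcal V^{(p)}$ and $\mathcal V^{(p)}_{new}$ respectively; in particular each is a module for $N_{-2+\frac{1}{p}}(\g_0)$.

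First I would prove the module isomorphism. By Theorem \ref{general-vp-new} the operator $\overline\Omega = \Omega\otimes\mbox{Id}$ is an invertible intertwiner for the affine action, and by hypothesis $[\overline\Omega, h(n)] = 0$ for all $n\in\Z$. Invertibility together with this commutation shows that $\overline\Omega$ carries $\{v : h(n)v=0,\ n\ge 0\}$ bijectively onto the corresponding subspace of $\mathcal V^{(p)}_{new}$; that is, $\overline\Omega(\mathcal N^{(p)}) = \mathcal N^{(p)}_{new}$. Since $\overline\Omega$ intertwines the full $L_{-2+\frac{1}{p}}(\mathfrak{sl}(2))$--action and $N_{-2+\frac{1}{p}}(\g_0)$ is a vertex subalgebra of $L_{-2+\frac{1}{p}}(\mathfrak{sl}(2))$, the restriction $\overline\Omega|_{\mathcal N^{(p)}}$ intertwines the parafermion action, hence is an isomorphism of $N_{-2+\frac{1}{p}}(\g_0)$--modules.

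For the explicit decomposition I would apply the commutant functor $\mbox{Com}(M_h(1),-)$ to the branching of $\mathcal V^{(p)}$ as an $\widehat{\mathfrak{sl}}(2)$--module, namely $\mathcal V^{(p)} = \bigoplus_{m\ge 0}(m+1) L^{\g_0}_{-2+\frac{1}{p}}(m\omega_1)$ (cf. \cite{A-TG}, \cite{ACGY} and Theorem \ref{general-vp-new}). Since the intersection of the kernels of $h(n)$, $n\ge 0$, is additive over direct sums, it suffices to compute $\mbox{Com}(M_h(1), L^{\g_0}_{-2+\frac{1}{p}}(m\omega_1))$. The $h(0)$--weights occurring in $L^{\g_0}_{-2+\frac{1}{p}}(m\omega_1)$ are all congruent to $m$ modulo $2$, so the weight-zero space, and hence the commutant, is nonzero precisely when $m$ is even; for $m = 2n$ the charge-zero component is by definition the parafermion module $N^{\g_0}_{-2+\frac{1}{p}}(2n\omega_1)$. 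Collecting terms with multiplicity $m+1 = 2n+1$ yields $\mathcal N^{(p)} \cong \bigoplus_{n\ge 0}(2n+1) N^{\g_0}_{-2+\frac{1}{p}}(2n\omega_1)$, and the same decomposition holds for $\mathcal N^{(p)}_{new}$ by the isomorphism just established.

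The routine parts, invertibility of $\overline\Omega$ and additivity of the commutant, are immediate. The main point requiring care is the identification $\mbox{Com}(M_h(1), L^{\g_0}_{-2+\frac{1}{p}}(m\omega_1)) \cong N^{\g_0}_{-2+\frac{1}{p}}(m\omega_1)$ together with its vanishing for odd $m$: this rests on the Heisenberg coset decomposition of the $\widehat{\mathfrak{sl}}(2)$--modules $L^{\g_0}_{-2+\frac{1}{p}}(m\omega_1)$ at the non-integral level $k=-2+\frac{1}{p}$, and on checking that the charge-zero sector is exactly the simple parafermion module indexed by $m\omega_1$. This structural input on parafermion vertex algebras is the only nontrivial step.
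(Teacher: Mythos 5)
Your argument is correct and coincides with the paper's: the paper likewise deduces the corollary directly from Theorem \ref{general-vp-new} using that $\overline\Omega=\Omega\otimes\mbox{Id}$ commutes with all $h(n)$, and obtains the decomposition by passing the known $\widehat{\mathfrak{sl}}(2)$--branching of $\mathcal V^{(p)}$ through the Heisenberg commutant (the charge-zero sector vanishing for odd $m$). Your write-up simply spells out the routine steps the paper leaves implicit.
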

 \vskip 5mm
 The most interesting case is $p=2$, since then $\mathcal N^{(2)} _{new} = N_{-\tfrac{3}{2}} (\g)$.    
Let us now determine the generators of $N_{-\tfrac{3}{2}} (\g)$.

 If $U_1$ and $U_2$ are vector subspaces of the vertex algebra $V$, denote by  
$$ U_1 \cdot U_2 = \mbox{span}_{\C} \{ u _n v \ \vert u \in U_1 , v \in U_2\}$$   the fusion product of $U_1$ and $U_2$. If $U_1, U_2$ are modules for a vertex subalgebra $V_0$ of $V$, then $U_1 \cdot U_2$ is also a $V_0$--module.

The next  lemma  follows from the proof of Theorem  \ref{conj-1} in the case $p=2$.
\begin{lemma} \label{gen-p2}
\item[(1)] $\mathcal W^{(2)}_{new}= \overline F^+$ is a simple vertex algebra strongly generated by
$\omega,   w_{2,2},  w_{2,1}, w_{2,0}.$
\item[(2)] Let $U_{n,j}$ be the Virasoro module generated by $w_{n,j}$. Then we have:
$$  U_{2n+2, 2n+2} \subset U_{2,2} \cdot U_{2n,2n}, \quad U_{2n+2, 0} \subset U_{2,0}  \cdot U_{2n,0}, \quad U_{2n+2, j+1} \subset  U_{2,1} \cdot  U_{2n, j }
$$
where $j=0, \dots, 2 n$. 
 \end{lemma}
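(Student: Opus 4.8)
The plan is to read the lemma off as the $p=2$ specialization of the results of Section~\ref{generalizacija}, after translating everything into the singular vectors $w_{2m,j}$ of $\overline F$. The starting point is the identification $\overline F^+ = \mathcal W^{(2)}_{new}$: since $\gamma = 2\alpha$ and $V_{\Z\gamma}$ is exactly the even-charge part $F^+$ of $F = V_{\Z\alpha}$, one has $\mathcal W^{(2)}_{new} = \mbox{Ker}_{V_{\Z\gamma}}\widetilde Q_{new} = (\mbox{Ker}_F\widetilde Q_{new})^+ = \overline F^+$. For simplicity I would argue directly: if $I\ne 0$ is an ideal of $\overline F^+$, then the ideal of $\overline F$ it generates is nonzero, hence equal to $\overline F$ by simplicity of $\overline F$; writing $\overline F = \overline F^+\oplus\overline F^-$ and using $\overline F^+\cdot I\subset I$, $\overline F^-\cdot I\subset\overline F^-$, one gets $\overline F^+\subset I + \overline F^-$, and therefore $I = \overline F^+$.

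The key dictionary is that $\Omega = \exp[e^\gamma_1]$ commutes with $Q = e^\gamma_0$ (because $(e^\gamma)_0 e^\gamma = 0$ in $V_{\Z\gamma}$) and carries $Q^j e^{-m\gamma}$ to a nonzero multiple of $w_{2m,j}$. Indeed, by Proposition~\ref{isomorphism-fields-1} (via $\Omega a^\pm(n) = \varphi^\pm(n)\Omega$ and $\Omega\mathbf 1 = \mathbf 1$) one has $\Omega^{-1}w_{2m,0} = a^+(-2m)\cdots a^+(-1)\mathbf 1$, a vector of charge $-2m$ lying in the one–dimensional lowest–weight space of that charge sector, hence a nonzero multiple of $e^{-2m\alpha} = e^{-m\gamma}$; thus $\Omega e^{-m\gamma}\in\C^\times w_{2m,0}$ and $\Omega Q^j e^{-m\gamma} = Q^j\Omega e^{-m\gamma}\in\C^\times w_{2m,j}$. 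In particular $F_{new} = \Omega e^{-\gamma}$, $H_{new} = \Omega Q e^{-\gamma}$, $E_{new} = \Omega Q^2 e^{-\gamma}$ of Theorem~\ref{generators-general} are nonzero multiples of $w_{2,0}, w_{2,1}, w_{2,2}$, and the module $U_{2m,j}$ is precisely the $L$–Virasoro module generated by $\Omega Q^j e^{-m\gamma}$. Combined with Theorem~\ref{generators-general}, this yields assertion~(1): $\overline F^+$ is strongly generated by $\omega, w_{2,0}, w_{2,1}, w_{2,2}$.

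For (2), I would transport the three families of relations established inside the proof of Theorem~\ref{generators-general} (for $p=2$, with $\ell = -4n-1$), namely $(E_{new})_\ell\,\Omega Q^{2n}e^{-n\gamma} = \nu_2\,\Omega Q^{2n+2}e^{-(n+1)\gamma}$, $(F_{new})_\ell\,\Omega e^{-n\gamma} = \nu_1\,\Omega e^{-(n+1)\gamma} + v_0$, and $(H_{new})_\ell\,\Omega Q^j e^{-n\gamma} = C_j\,\Omega Q^{j+1}e^{-(n+1)\gamma} + v_j$, with $v_0\in\mbox{Ker}\,Q^{2n+2}$, $v_j\in\mbox{Ker}\,Q^{2n+1-j}$ and $\nu_1,\nu_2,C_j\ne 0$. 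Under the dictionary above their left–hand sides are mode products lying in $U_{2,2}\cdot U_{2n,2n}$, $U_{2,0}\cdot U_{2n,0}$, $U_{2,1}\cdot U_{2n,j}$ respectively, each of which is a Virasoro module by the discussion preceding the lemma. The $E$–relation has no correction, so it immediately gives $U_{2n+2,2n+2}\subset U_{2,2}\cdot U_{2n,2n}$, since that fusion product is a Virasoro module containing the generating singular vector $w_{2n+2,2n+2}$.

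The hard part will be the remaining two inclusions, where I must show that the corrections $v_0$ and $v_j$ do not obstruct isolating the relevant singular vector. Here I would use that $\overline F^+$ is completely reducible as an $L^{Vir}(-2,0)$–module, so that for a submodule $M$ and the projection $\pi_N$ onto the $L^{Vir}(-2,h_{2n+2})$–isotypic component $N$ one has $\pi_N(M) = M\cap N$. The crux is that $v_0$ and $v_j$ carry no component in $N$, i.e.\ that their only weight–$h_{2n+2}$ contributions come from descendants of strictly lower singular vectors $w_{2m,i}$ with $m\le n$ (which is consistent with the kernel conditions $v_0\in\mbox{Ker}\,Q^{2n+2}$, $v_j\in\mbox{Ker}\,Q^{2n+1-j}$, since such descendants are annihilated by the appropriate power of $Q$). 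Granting this, applying $\pi_N$ to the $F$– and $H$–relations kills the corrections and places $w_{2n+2,0}$, respectively $w_{2n+2,j+1}$, inside $U_{2,0}\cdot U_{2n,0}$, respectively $U_{2,1}\cdot U_{2n,j}$. Verifying that the corrections are genuinely descendants of lower modules — and not diagonal combinations of the top singular vectors — is the main technical obstacle; should a top component survive, one would instead run a simultaneous downward induction on $j$, using the clean $E$–inclusion as base together with the derivation identity $Q\big(U_{2,1}\cdot U_{2n,j}\big)\subset U_{2,2}\cdot U_{2n,j} + U_{2,1}\cdot U_{2n,j+1}$ to remove it. This is exactly the place where the explicit lattice computation behind \cite[Proposition~1.3]{AM08} and the proof of Theorem~\ref{generators-general} must be invoked.
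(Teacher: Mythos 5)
Your route is the paper's route: the paper disposes of this lemma with the one\--line remark that it follows from the proof of Theorem \ref{conj-1} (really, of Theorems \ref{conj-1} and \ref{generators-general}) in the case $p=2$, and what you have written out is exactly that specialization --- the identification $\mathcal W^{(2)}_{new}=\overline F^{+}$ via $\gamma=2\alpha$, the dictionary $\Omega Q^{j}e^{-m\gamma}\in \C^{\times}w_{2m,j}$ coming from $\Omega a^{\pm}(n)=\varphi^{\pm}(n)\Omega$ and $[Q,e^{\gamma}_1]=0$, and the transport through $\Omega$ of the relations of \cite[Proposition 1.3]{AM08} used in the proof of Theorem \ref{generators-general}. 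Part (1) and the $E$\--inclusion are complete as you state them. The one step you leave conditional is in fact already forced by the kernel conditions you quote, so it is not an open obstacle: since $Q$ commutes with every $L(n)$ it preserves the isotypic decomposition of Theorem \ref{sing-vekt-1}, and on singular vectors $Q^{m}w_{2n+2,i}=Q^{m+i}w_{2n+2,0}$ is nonzero exactly when $m+i\le 2n+2$; hence $v_{0}\in\mbox{Ker}\,Q^{2n+2}$ has no $w_{2n+2,0}$\--component and $v_{j}\in\mbox{Ker}\,Q^{2n+1-j}$ has no $w_{2n+2,i}$\--component for $i\le j+1$. Projecting onto the isotypic block therefore places $\nu_1 w_{2n+2,0}$ (resp. $C_j w_{2n+2,j+1}$) \emph{plus a combination of $w_{2n+2,i}$ with strictly larger index $i$} inside the corresponding fusion product; stripping those higher\--index terms is exactly the downward induction on $j$ that you propose, seeded by the clean $E$\--relation. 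This is also all that the paper's own proof of Theorem \ref{generators-general} establishes, and it is the form of the statement that the subsequent generation arguments for $N_{-3/2}(\g)$ actually use; so, modulo writing out that last induction instead of offering it as a fallback, your proposal is correct and coincides with the paper's argument.
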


\vskip 5mm

\begin{theorem}
  $N_{-3/2}(\g)$ is generated by $N_{-3/2}(\g_0)$ and three primary vectors $Z_{2,0}, Z_{2,1}, Z_{2,2}$  of conformal weight $4$.
\end{theorem}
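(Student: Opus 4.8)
The plan is to reduce the assertion to the generation of $\overline F^+=\mathcal W^{(2)}_{new}$ proved in Lemma \ref{gen-p2} and to transport it to the parafermion coset through the free-field realization of Subsection \ref{screening-real}. First I would invoke Corollary \ref{general-vp-new-paraferm} in the case $p=2$, which gives
$$N_{-3/2}(\g)=\mathcal N^{(2)}_{new}\cong\bigoplus_{n=0}^{\infty}(2n+1)\,N^{\g_0}_{-3/2}(2n\omega_1)$$
as $N_{-3/2}(\g_0)$--modules. The summand $n=0$ is $N_{-3/2}(\g_0)$ itself, and the summand $n=1$ consists of three copies of $N^{\g_0}_{-3/2}(2\omega_1)$. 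I would take $Z_{2,0},Z_{2,1},Z_{2,2}$ to be the primary (top) vectors of these three copies. Since every vector of $N_{-3/2}(\g)$ satisfies $h(0)=0$, the parafermion Virasoro vector $\omega_{sug}-\tfrac{1}{4k}{:}hh{:}$ acts on these tops by the bare Sugawara weight $\Delta_{sug}(2\omega_1)=\tfrac{2(2+2)}{4(k+2)}=4$ at $k=-3/2$; this confirms the conformal weight $4$ claimed in the statement.

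Next I would set $U$ to be the vertex subalgebra of $N_{-3/2}(\g)$ generated by $N_{-3/2}(\g_0)$ together with $Z_{2,0},Z_{2,1},Z_{2,2}$, and prove $U=N_{-3/2}(\g)$ by showing $(2n+1)\,N^{\g_0}_{-3/2}(2n\omega_1)\subset U$ for every $n$, by induction on $n$. The base cases $n=0,1$ hold by construction. The inductive step rests on the observation that, under the screening realization, the copies of $N^{\g_0}_{-3/2}(2n\omega_1)$ are generated over $N_{-3/2}(\g_0)$ by the even-parity singular vectors $w_{2n,j}$, $0\le j\le 2n$, of $\overline F^+$ (tensored with the appropriate lattice vectors and projected to the $h$--commutant), and that under this identification $Z_{2n,j}$ corresponds to $w_{2n,j}$. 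Thus the generation problem in $N_{-3/2}(\g)$ is the image of the generation problem in $\overline F^+=\mathcal W^{(2)}_{new}$.

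The decisive input is then Lemma \ref{gen-p2}(2), whose inclusions
$$U_{2n+2,2n+2}\subset U_{2,2}\cdot U_{2n,2n},\quad U_{2n+2,0}\subset U_{2,0}\cdot U_{2n,0},\quad U_{2n+2,j+1}\subset U_{2,1}\cdot U_{2n,j}\ (0\le j\le 2n)$$
exhibit every $w_{2n+2,\bullet}$ inside fusion products of the $w_{2,\bullet}$ with the $w_{2n,\bullet}$, reaching all $2n+3$ copies at the next level. Transporting these along the correspondence $w_{2n,j}\leftrightarrow Z_{2n,j}$, and using that the coset projection onto $\mbox{Com}(M_h(1),\cdot)$ and the screening $S^{\mathfrak{osp}}$ commute with the relevant normally ordered products, I would conclude that the three copies of $N^{\g_0}_{-3/2}(2(n+1)\omega_1)$ lie in the products of the $Z_{2,\bullet}$ with the level-$2n$ copies, hence in $U$, completing the induction and the proof.

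The hard part will be the transfer of the \emph{nonvanishing} fusion data from $\overline F^+$ to the coset: one must verify that the nonzero structure constants underlying Lemma \ref{gen-p2}(2) (the analogues of the coefficients $\nu_1,\nu_2,C_j\ne 0$ used in the proof of Theorem \ref{generators-general}) survive restriction to $\mbox{Ker}\,S^{\mathfrak{osp}}$ and passage to the $h$--commutant, so that each of the three copies at level $2(n+1)$ is genuinely produced rather than accidentally annihilated. Checking the compatibility of the modes of $Z_{2,j}$ with the Heisenberg coset projection, and that the exact three-fold multiplicity is preserved at every level, is the technical heart of the argument.
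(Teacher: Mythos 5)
Your proposal follows essentially the same route as the paper: it deduces the decomposition from Corollary \ref{general-vp-new-paraferm}, transports the fusion inclusions of Lemma \ref{gen-p2}(2) into $L_{-3/2}(\g)$ via the screening realization, and then restricts them to the Heisenberg commutant to run the induction on $n$. The step you single out as the ``technical heart'' (survival of the nonvanishing fusion data under passage to $\mathrm{Ker}\,S^{\mathfrak{osp}}$ and to the $h$--commutant) is precisely the step the paper disposes of by restricting the affine fusion rules (\ref{affine-fusion}) to the parafermion subspaces $N_{n,j}$, so you have correctly identified both the argument and its one delicate point.
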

\begin{proof} Let $W_{n,j}:= L_{-3/2} (\g_0). (w_{n,j} \otimes e^{\frac{n}{2} c}) \cong L_{-3/2} ^{\g_0}  (n \omega_1)$. Using Lemma  \ref{gen-p2} we get the following fusion rules between $L_{-3/2}(\g_0)$--submodules of $L_{-3/2}(\g)$:
\bea  &&W_{2n+2, 2n+2} \subset W_{2,2} \cdot W_{2n,2n}, \quad W_{2n+2, 0} \subset   W_{2,0}    \cdot W_{2n,0}, \quad W_{2n+2, j+1} \subset   W_{2,1} \cdot  W_{2n, j },
\label{affine-fusion}\eea
where $j=0, \dots, 2n$.   
Let $N_{n,j} = \{ v \in W_{n,j} \ \vert \ h(n) v = 0 \ \forall n \ge 0\}$.
 Note that
 $N_{n,j} = 0$ if $n$ is odd. 
 Using restriction  of the fusion rules  (\ref{affine-fusion}) to the parafermion algebra, we get the following fusion rules between $N_{-3/2} (\g_0)$--modules inside of $N_{-3/2}(\g)$:
 \bea  &&N_{2n+2, 2n+2} \subset N_{2,2} \cdot N_{2n,2n}, \quad N_{2n+2, 0} \subset   N_{2,0}    \cdot N_{2n,0}, \quad N_{2n+2, j+1} \subset   N_{2,1} \cdot  N_{2n, j },
\label{parafermion-fusion}\eea
where $j=0, \dots, 2n$.
 
 Using  Corollary   \ref{general-vp-new-paraferm} we get the following decomposition:
 $$ N_{-\tfrac{3}{2}}(\g) = \bigoplus_{n=0} ^{\infty} \bigoplus_{j=0}^{2n} N_{2n,j}. $$
 Now relation (\ref{parafermion-fusion}) easily implies that $N_{-\tfrac{3}{2}}(\g)$ is generated by $N_{-\tfrac{3}{2}}(\g_0)$ and $N_{2,j}$, $j=0,1,2$.
Moreover, $N_{2n,j}$ is an irreducible $N_{-3/2} (\g_0)$--module generated by  a highest weight vector  which we denote by $Z_{2n,j}$. Thus, $N_{-\tfrac{3}{2}}(\g_0)$ is generated by $N_{-\tfrac{3}{2}}(\g_0)$ and  three highest weight vectors $Z_{2,j}$, $j=0,1,2$.
  \end{proof}
 
 \section*{Acknowledgment}
 
D.A. is  partially supported   by the
QuantiXLie Centre of Excellence, a project coffinanced
by the Croatian Government and European Union
through the European Regional Development Fund - the
Competitiveness and Cohesion Operational Programme
(KK.01.1.1.01.0004). 
Q. W. is supported by China NSF grants No.12071385 and the Fundamental Research Funds for the Central
Universities No.20720200067

\vskip10pt {\footnotesize{}{ }\textbf{\footnotesize{}D.A.}{\footnotesize{}:
Department of Mathematics, University of Zagreb, Bijeni\v{c}ka 30,
10 000 Zagreb, Croatia; }\texttt{\footnotesize{}adamovic@math.hr}{\footnotesize \par}

\vskip 10pt \textbf{\footnotesize{}Q.W.}{\footnotesize{} School of Mathematical
Sciences, Xiamen University, Fujian, 361005, China;} \texttt{\footnotesize{}
qingwang@xmu.edu.cn}{\footnotesize \par}

\end{document}